\documentclass[12pt,a4paper]{amsproc}

\usepackage[T1]{fontenc}
\usepackage[utf8]{inputenc}
\usepackage{lmodern}
\usepackage{amsmath,amssymb,amsthm,mathtools}
\usepackage{enumitem, todonotes}
\usepackage[
  colorlinks=true,
  linkcolor=blue,
  citecolor=blue,
  urlcolor=blue,
  pagebackref=true
]{hyperref}

\usepackage[
  a4paper,
  left=29mm,
  right=29mm,
  top=29mm,
  bottom=31mm
]{geometry}

\title[The Howe--Moore property]
{On the Howe--Moore property for\\ automorphism groups of buildings}

\author[P.-E. Caprace]{Pierre-Emmanuel Caprace}
\address{P.-E.C., UCLouvain, Chemin du Cyclotron 2, 1348 Louvain-la-Neuve, Belgium}
\email{pierre-emmanuel.caprace@uclouvain.be}

\author[A. Thom]{Andreas Thom}
\address{A.T., TU Dresden, 01062 Dresden, Germany}
\email{andreas.thom@tu-dresden.de}
\date{July 14, 2026}

\thanks{PEC is supported in part by the FWO and the F.R.S.-FNRS under the EOS programme (project ID 40007542).}

\newtheorem{theorem}{Theorem}[section]
\newtheorem{thmintro}{Theorem}

\newtheorem{corintro}[thmintro]{Corollary}
\newtheorem{proposition}[theorem]{Proposition}
\newtheorem{lemma}[theorem]{Lemma}
\newtheorem{corollary}[theorem]{Corollary}

\theoremstyle{definition}

\newtheorem{remark}[theorem]{Remark}

\DeclareMathOperator{\Aut}{Aut}
\DeclareMathOperator{\Stab}{Stab}

\DeclareMathOperator{\supp}{supp}
\DeclareMathOperator{\HS}{HS}
\DeclareMathOperator{\Tr}{Tr}
\DeclareMathOperator{\Ad}{Ad}
\DeclareMathOperator{\ch}{Ch}
\DeclareMathOperator{\IRS}{IRS}

\newcommand{\IRSerg}{\IRS_{\mathrm{erg}}}
\newcommand{\cH}{\mathcal H}
\newcommand{\cK}{\mathcal K}
\newcommand{\cU}{\mathcal U}

\newcommand{\one}{\mathbf 1}
\newcommand{\ol}[1]{\overline{#1}}
\newcommand{\norm}[1]{\left\lVert #1\right\rVert}
\newcommand{\abs}[1]{\left\lvert #1\right\rvert}
\newcommand{\ip}[2]{\left\langle #1,#2\right\rangle}
\newcommand{\weakcontain}{\prec}
\newcommand{\qmin}{q_{\mathrm{min}}}

\begin{document}

\begin{abstract}
Let $G$ be a closed type-preserving subgroup of the automorphism group of a thick locally finite building $X$ of finite rank, and assume that $G$ acts Weyl-transitively. We prove that every   unitary representation of $G$ is mixing, unless its restriction to a parabolic subgroup of minimal non-spherical type is amenable in the sense of Bekka. It follows that every unitary representation of $G$ that is weakly contained in the regular representation, is mixing. In case $X$ is of minimal non-spherical type and its thickness satisfies some modest lower bound, we deduce that $G$  has the Howe--Moore property provided its only compact quotient is trivial. We also  obtain results on  rigidity of invariant random subgroups for Kac--Moody lattices of compact hyperbolic type, yielding examples of infinite finitely presented Kazhdan groups with exactly two  ergodic invariant random subgroups.
\end{abstract}

\maketitle

\section{Introduction}
\label{sec:introduction}

A locally compact group $G$ has the \textbf{Howe--Moore property} if, for every   unitary representation $\pi \colon G\to \cU(\cH)$ without non-zero $G$-fixed vectors, the matrix coefficient $g\mapsto \ip{\pi(g)\xi}{\eta}$ vanishes at infinity for all $\xi,\eta\in\cH$. This property was established by Howe--Moore \cite{HoweMoore1979} for connected non-compact simple real Lie groups with finite center, and for simple algebraic groups over non-Archimedean local fields.
Besides  linear groups, the case of totally disconnected groups acting strongly transitively on trees  was treated by Lubotzky--Mozes \cite{LubotzkyMozes1992} and Burger--Mozes \cite[Prop.~4.2]{BuMo_lattices}; Ciobotaru gave a unified proof for the classical simple groups and the automorphism groups of trees, see \cite{Ciobotaru}.
The initial goal of this paper is to establish the Howe--Moore property for locally compact Kazhdan groups beyond the classical cases. The class groups we consider consists of locally compact groups $G$ acting Weyl-transitively on a building $X$ of minimal non-spherical type $(W, S)$. We show that if the thickness of $X$ is at least~$11$, every weakly mixing unitary representation of $G$ is mixing. This implies that complete Kac--Moody groups of minimal non-spherical type over finite fields of order~$\geq 11$ have the Howe--Moore property, see Corollary~\ref{corollary_HM_KM}. The restriction on the type of the building being minimal non-spherical type is necessary: indeed, without that condition, the group $G$ possesses  parabolic subgroups $P_J$ that are non-compact proper open subgroups. This  is an obstruction to the Howe--Moore property, since the quasi-regular representation on $G/P_J$ is not mixing. Our main result, Theorem~\ref{theoremA}, concerns  Weyl-transitive automorphism groups $G$ of thick locally finite buildings $X$ without any restriction on  the type $(W, S)$: we show that  every   unitary representation of $G$ is mixing, unless its restriction to a parabolic subgroup of minimal non-spherical type is amenable in the sense of Bekka.  The Howe--Moore property in minimal non-spherical type will be derived as a consequence.
 
Before presenting our results in more detail, we  recall  standard terminology on buildings and their automorphisms, following \cite{AbramenkoBrown}. Let $X$ be a thick locally finite building of finite rank and type $(W,S)$ with Weyl distance $\delta \colon \ch(X) \times \ch(X) \to W$, and let $G<\Aut(X)$ be a closed type-preserving subgroup. We assume that the $G$-action is  \textbf{Weyl-transitive}, i.e. it is transitive on ordered pairs of chambers at any given Weyl distance. This condition is formally weaker than strong transitivity, see  \cite[Corollary~6.12]{AbramenkoBrown}. It is equivalent to strong transitivity in the special case of closed subgroups of the automorphism group of a locally finite affine building, see \cite[Theorem~5.16]{Kramer}. Fix a base chamber $c_0$ and put $B=\Stab_G(c_0)$. Then $B$ is compact open and the $B$-double cosets are indexed by the Weyl group $W$. For $J\subset S$, let $W_J=\langle J\rangle$ and let $P_J$ be the stabilizer of the  residue of type $J$ containing $c_0$; equivalently, $P_J=\bigsqcup_{w\in W_J} BwB$, where $BwB$ denotes the double coset of elements sending the chamber $c_0$ to chambers at Weyl distance $w$ from $c_0$. In the strongly transitive case this is the usual standard parabolic subgroup coming from the associated $BN$-pair.
A subset $J\subset S$ is called \textbf{spherical} if $W_J$ is finite. It is called \textbf{minimal non-spherical} if $W_J$ is infinite, while $W_I$ is finite for every proper subset $I\subsetneq J$. We say that $(W, S)$ is \textbf{$n$-spherical} if every subset $J \subset S$ of cardinality~$\leq n$ is spherical. We say that $(W, S)$ is \textbf{simply laced} if for all distinct $s, t \in S$, the order of $st$ is $2$ or $3$ (in particular $(W, S)$ is $2$-spherical). 

In this paper, we study unitary representations of $G$. Unitary representations of locally compact groups are assumed throughout to be strongly continuous. We say that a  unitary representation $\pi \colon G\to\cU(\cH)$ is \textbf{mixing} (or $C_0$) if all its matrix coefficients vanish at infinity, i.e. for all $\xi,\eta\in\cH$, the function $g\mapsto \ip{\pi(g)\xi}{\eta}$ belongs to $C_0(G)$.
We say that $\pi$ is \textbf{amenable in the sense of Bekka} if $\one_G\weakcontain \pi\otimes\ol\pi$, see \cite{BekkaAmenable} for details. Equivalently, the conjugation action of $G$ on $\mathcal B(\cH)$ admits an invariant state.

\medskip

The main result  of this note is the following.

\begin{thmintro}
\phantomsection\label{theoremA}
Let $X$ be a thick locally finite building of finite rank and type $(W,S)$, and let $G<\Aut(X)$ be a closed type-preserving subgroup acting Weyl-transitively. Let $\pi \colon G\to\cU(\cH)$ be a   unitary representation. Then at least one of the following assertions holds.
\begin{enumerate}[label=\textup{(\roman*)},leftmargin=*]
\item $\pi$ is mixing.
\item There is a minimal non-spherical subset $J\subset S$ such that $\pi|_{P_J}$ is amenable in the sense of Bekka.
\end{enumerate}
\end{thmintro}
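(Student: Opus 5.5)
Assume $\pi$ is not mixing; we will find a minimal non-spherical $J$ with $\pi|_{P_J}$ amenable in the sense of Bekka. The plan has four steps: extract a divergent sequence with non-vanishing matrix coefficient, use the Cartan-type decomposition $G=\bigsqcup_{w\in W} BwB$ to reduce to Weyl group elements, pass to a limit operator $T$ on $\cH$, and average $T$ over a non-compact parabolic to produce an invariant state on $\mathcal B(\cH)$.

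\emph{Step 1: reduction to a sequence in $W$.} Since $\pi$ is not mixing, there are unit vectors $\xi,\eta$, a constant $\epsilon>0$ and a sequence $g_n\to\infty$ in $G$ with $\abs{\ip{\pi(g_n)\xi}{\eta}}\geq\epsilon$. Because $B$ is compact open, we may take $\xi,\eta$ to be $B$-smooth, i.e. fixed by a finite-index open subgroup $U<B$. Write $g_n=b_n\dot w_n b_n'$, where $\dot w_n$ is a fixed lift of $w_n\in W$ and $b_n,b'_n\in B$. Passing to a subsequence, $b_n\to b$ and $b_n'\to b'$, so $\ell(w_n)\to\infty$ and, replacing $\xi,\eta$ by translates, $\abs{\ip{\pi(\dot w_n)\xi}{\eta}}\geq\epsilon/2$. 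We then extract a further subsequence along which $w_n$ converges in a suitable combinatorial compactification of $W$. The key combinatorial fact we want is that there is a minimal non-spherical $J$ and a subsequence with $w_n=u_nv_n$, where $v_n\in W_J$, $\ell(w_n)=\ell(u_n)+\ell(v_n)$, and $\ell(v_n)\to\infty$. To get this, take a reduced word for $w_n$ and consider its last letters: a ray in the Coxeter complex eventually stays in some non-spherical residue, and any non-spherical subset contains a minimal non-spherical one. This is where the argument is uncertain, since $\ell(v_n)\to\infty$ inside a minimal non-spherical $W_J$ may not follow directly from the limiting ray. We may need to choose suffixes more cleverly, using that every infinite Coxeter group has an infinite minimal non-spherical parabolic reachable along reduced suffixes.

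\emph{Step 2: the limit operator.} Consider the averaging operators $A_n=\pi(\dot w_n)$ compressed between $U$-averages, equivalently the operators $T_n=E_U\pi(\dot w_n)E_U$. In the building these correspond to Hecke operators, and Weyl-transitivity yields the Hecke algebra factorization $T_{u}T_{v}=T_{uv}$ when lengths add. We take a weak-$*$ limit point $T$ of suitably normalized compressions $\pi(b\dot v_n)$ averaged over $b\in B\cap P_J$. The aim is an operator with $\ip{T\xi'}{\eta'}\neq0$ that is nearly invariant under conjugation by elements of $P_J$.

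\emph{Step 3: amenability.} Define a state on $\mathcal B(\cH)$ by $\varphi(x)=\lim_\omega\Tr(x\,\rho_n)$, where $\rho_n$ is the density built from the vectors $\pi(\dot v_n)\xi$. Invariance under $P_J$ should come from the fact that, for $p\in P_J$, the Weyl distance between $p v_n c_0$ and $v_n c_0$ stays bounded while $\ell(v_n)\to\infty$ in the minimal non-spherical group $W_J$. Because every proper parabolic of $W_J$ is finite, the stabilizer structure along $v_n$ is controlled by the thickness-independent geometry of the residue, which is a building of minimal non-spherical type. Concretely, we aim to show that the vectors $\zeta_n=\pi(\dot v_n)\xi\otimes\ol{\pi(\dot v_n)\xi}$ are asymptotically $P_J$-invariant in $\cH\otimes\ol\cH$, giving $\one_{P_J}\weakcontain\pi\otimes\ol\pi|_{P_J}$. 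The non-vanishing of the coefficient $\abs{\ip{\pi(\dot w_n)\xi}{\eta}}$ is what guarantees that these vectors do not degenerate.

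\emph{Main obstacle.} The hard step is Step 3: proving asymptotic invariance of $\zeta_n$ under all of $P_J$, not just under $B\cap P_J$. This is where the minimal non-spherical hypothesis must be used. The first thing I would try is to check invariance on generators: $P_J$ is generated by the rank-one parabolics $P_{\{s\}}$, $s\in J$. For each $s$, one of $s v_n$ or $v_n s$ relates $v_n$ to a length-comparable element, and the finiteness of $W_I$ for $I\subsetneq J$ would be used to show that the displacement of $\zeta_n$ by $P_{\{s\}}$ tends to zero.
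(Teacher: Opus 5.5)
There is a genuine gap, and it sits at both places you flagged as uncertain. First, the combinatorial claim of Step 1 is false, so no clever choice of suffixes can rescue it. Take $W=\langle s_1,s_2,s_3\mid s_i^2=1\rangle$, the universal Coxeter group of rank $3$, a type covered by the theorem. Its minimal non-spherical subsets are the three pairs. Since there are no braid relations, $w_n=(s_1s_2s_3)^n$ has a unique reduced word. A length-additive factorisation $w_n=u_nv_n$ with $v_n\in W_J$ therefore makes a reduced word of $v_n$, written in letters of $J$, a suffix of that word. Every subword of length $3$ contains all three letters, so $\ell(v_n)\leq 2$ whenever $\abs{J}=2$. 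Thus a non-decaying coefficient along such a sequence cannot be funnelled into a single minimal non-spherical parabolic.

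Second, Step 3 has no working mechanism. The claim that $\delta(pv_nc_0,v_nc_0)$ stays bounded for $p\in P_J$ fails already for $p\in B$. In a $(q+1)$-regular tree, $B$ is transitive on the $q^n$ edges at distance $n$ from $c_0$, so it moves $v_nc_0$ to edges at distance $2n-1$ from it. In any case, bounded displacement in $X$ would say nothing about $\norm{\pi(\dot v_n^{-1}p\dot v_n)\xi-\xi}$. Even the target is stronger than needed: almost invariant rank-one tensors $y\otimes\ol y$ amount to almost invariant lines, which Bekka amenability does not provide in general. For instance, an irreducible representation of dimension $\geq2$ of a finite group is Bekka amenable but has no almost invariant line.

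The missing idea is a device that converts ``some coefficient does not decay'' into $\one\weakcontain\pi\otimes\ol\pi$ on some $P_J$, without following one $J$ along the sequence. The paper works with Hecke operators of $\sigma=\pi\otimes\ol\pi$ and argues contrapositively.

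\begin{enumerate}[label=(\alph*),leftmargin=*]
\item Let $L=M+1$, where $M$ is the maximal length of an element of a spherical $W_I$. Then every $b$ with $\ell(b)=L$ has non-spherical support.
\item Positivity of the structure constants in the basis $T_w=q_w^{-1}\one_{BwB}$ shows that the support of $T_b^**T_b$ generates $P_{\supp(b)}$.
\item By \cite[Proposition~G.4.2]{BHV}, $\norm{\sigma(T_b)}=1$ then forces $\one\weakcontain\sigma|_{P_{\supp(b)}}$. This gives Bekka amenability of $\pi|_{P_J}$ for any minimal non-spherical $J\subset\supp(b)$.
\item If this fails for every $J$, finiteness of $\{b:\ell(b)=L\}$ gives a uniform $\theta<1$. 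Cutting a reduced word of $w$ into length-$L$ blocks then yields $\norm{\sigma(T_w)}\leq\theta^{\lfloor\ell(w)/L\rfloor}$.
\end{enumerate}

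Different blocks may involve different $J$'s, and their contractions simply multiply. This is exactly what your Step 1 cannot achieve.

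Finally, the paper passes from $B$-fixed vectors of $\sigma$ to all vectors of $\pi$ via the estimate $\abs{\ip{\pi(g)v_1}{v_2}}^2\leq\ip{\pi(g)E_1\pi(g)^{-1}}{E_2}_{\HS}$. Here $E_j$ is the projection onto $\operatorname{span}(Bv_j)$, which is $B$-fixed in $\HS(\cH)\cong\cH\otimes\ol\cH$. Your Step 2 uses $U$-bi-invariant compressions $E_U\pi(\dot w_n)E_U$ instead, and for these the factorisation $T_uT_v=T_{uv}$ you invoke does not hold.
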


The assertions (i) and (ii) are not mutually exclusive. Indeed, in case $W$ is infinite dihedral, the building $X$ is a biregular tree, and the group  $G$ has the Haagerup property, see \cite[\S1.2.3]{CCJJV}. By \cite[Theorem~2.1.1]{CCJJV}, this means that $G$ has a mixing representation $\pi$ with $\one_G \weakcontain \pi$. In particular  $\pi$ is Bekka amenable, hence it satisfies both assertions of Theorem~\ref{theoremA}. 

The proof of Theorem~\ref{theoremA} shows that $\pi$ is mixing provided that the diagonal matrix coefficient defined by any $B$-fixed vector in $\pi \otimes \ol \pi$ is mixing (see Proposition~\ref{prop:Bfixed-tensor-implies-C0}, valid for all totally disconnected locally compact groups). The latter condition is obtained by establishing an upper bound $\theta < 1$  on the norm of Hecke operators defined by short words $w $ in the Weyl group $W$ of $G$. This specific step is achieved by a computation in the Iwahori--Hecke-algebra. We consider the   basis  $(T_w)_{w \in W}$ of the Hecke algebra defined by setting $T_w = \frac 1 {m(BwB)} \one_{BwB}$. If $w=s_1\cdots s_n \in W$ is reduced and $J=\supp(w)=\{s_1,\ldots,s_n\}$, then the symmetric probability measure corresponding to $T_w^* * T_w$ is supported on $P_J$ and its support generates $P_J$, see    Lemma \ref{lem:hecke-markov-parabolic}. Thus $T_w^* * T_w$ is a Markov operator for the parabolic subgroup generated by the letters in a reduced expression of $w$. The spectral criterion of Bekka--de la Harpe--Valette \cite[Proposition G.4.2]{BHV} (see Proposition \ref{prop:BHV}) implies that $\|\pi(T_w)\| <1$, unless $\pi|_{P_J}$ is amenable in the sense of Bekka. 

Theorem~\ref{theoremA} has particularly strong consequences in the case where  the minimal non-spherical parabolics enjoy Kazhdan's property (T): in that case the two assertions of Theorem~\ref{theoremA} are mutually exclusive, and we get a strict dichotomy. Coxeter groups of  minimal non-spherical type are either affine, or of compact hyperbolic type and rank~$3, 4$ or $5$. In the affine case, property (T) always holds (see \cite{Oppenheim}). In   compact hyperbolic type,  we provide modest lower bounds on the thickness guaranteeing property (T), see Proposition~\ref{prop:Kazhdan}. Combining those with Theorem~\ref{theoremA}, we obtain the following. 

\begin{corintro}
\phantomsection\label{corollaryB}
Let $X$, $G$ and $(W,S)$ be as in Theorem~\ref{theoremA}. Suppose that $(W, S)$ satisfies at least  one of the following conditions:
\begin{enumerate}[label=(\arabic*)]
\item $(W, S)$ is simply laced. 
\item $(W, S)$ is $3$-spherical, and every panel of $X$  contains at least $5$ chambers. 
\item $(W, S)$ is $2$-spherical crystallographic, and every panel of $X$ contains  at least $11$ chambers.
\end{enumerate}
Then every unitary representation $\pi \colon G\to\cU(\cH)$ satisfies the following dichotomy: either $\pi$ is mixing, or $\pi|_{P_J}$ has a 
finite-dimensional subrepresentation for some minimal non-spherical $J\subset S$. 
\end{corintro}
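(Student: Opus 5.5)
We combine Theorem~\ref{theoremA} with property (T) for the minimal non-spherical parabolics. Let $\pi$ be a unitary representation of $G$ that is not mixing. By Theorem~\ref{theoremA} there is a minimal non-spherical $J\subset S$ such that $\pi|_{P_J}$ is amenable in the sense of Bekka, i.e.\ $\one_{P_J}\weakcontain (\pi\otimes\ol\pi)|_{P_J}$.

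The first step is to show that under each of the hypotheses (1)--(3), the group $P_J$ has Kazhdan's property (T) for every minimal non-spherical $J$. The group $P_J$ acts on the $J$-residue $R$ of $c_0$, which is a locally finite building of type $(W_J,J)$, with compact open kernel-free stabilisers. Moreover $P_J$ acts Weyl-transitively on $R$, since $P_J$ contains the double cosets $BwB$ for $w\in W_J$. Property (T) therefore passes between $P_J$ and a cocompact situation on $R$, and we must check that $R$ satisfies a spectral-gap criterion. There are two cases.
\begin{itemize}
\item If $W_J$ is affine, property (T) holds in any thickness by \cite{Oppenheim}.
\item If $W_J$ is compact hyperbolic, Proposition~\ref{prop:Kazhdan} supplies property (T) provided the thickness bounds hold. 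Here I will check case by case that the bounds in (1)--(3) match that proposition.
\end{itemize}
In case (1), $J$ is simply laced, so every rank-$2$ residue is a generalized $3$-gon or a product. The links are then projective planes, whose spectral gap exceeds $1/2$ for all orders $q\ge2$, and Garland/\.Zuk-type criteria apply without any thickness bound. In case (2), $3$-sphericity means that the links of vertices in a compact hyperbolic $J$ of rank $\ge4$ are spherical buildings of rank $\geq3$, and a thickness of at least $5$ suffices via Oppenheim's criterion. In case (3), we use the explicit spectral gaps of generalized $m$-gons with $m\in\{3,4,6\}$ and require $q\ge 11$.

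This verification is the main obstacle, but I take it to be exactly the content of Proposition~\ref{prop:Kazhdan}, so here it reduces to matching hypotheses.

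The second step uses (T) of $P_J$ to extract a finite-dimensional subrepresentation. Since $P_J$ has (T) and $\one_{P_J}\weakcontain(\pi\otimes\ol\pi)|_{P_J}$, the representation $\pi\otimes\ol\pi$ has a nonzero $P_J$-invariant vector. Now identify $\cH\otimes\ol\cH$ with the Hilbert--Schmidt operators $\HS(\cH)$, with $P_J$ acting by conjugation. An invariant vector is then a nonzero Hilbert--Schmidt operator $A$ commuting with $\pi(P_J)$. The operator $A^*A$ is compact, self-adjoint and nonzero, and it commutes with $\pi(P_J)$. Hence any eigenspace of $A^*A$ for a nonzero eigenvalue is finite-dimensional and $\pi(P_J)$-invariant. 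This gives a finite-dimensional subrepresentation of $\pi|_{P_J}$, proving the dichotomy.
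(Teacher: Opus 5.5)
Your overall architecture matches the paper's. Theorem~\ref{theoremA} produces $J$ with $\one_{P_J}\weakcontain(\pi\otimes\ol\pi)|_{P_J}$. Property (T) of $P_J$ upgrades this to an actual invariant vector, from which a finite-dimensional subrepresentation is extracted; your $A^*A$ argument correctly proves the fact the paper cites from \cite[Proposition~A.1.12]{BHV}.

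The gap is in the first step. You rightly call it the main obstacle, but then reduce it to ``matching hypotheses'' with Proposition~\ref{prop:Kazhdan}. That proposition only covers the \emph{crystallographic} compact hyperbolic types, whereas hypothesis (2) assumes only $3$-sphericity. There are non-crystallographic $3$-spherical compact hyperbolic diagrams of rank $4$ and $5$, e.g.\ $[3,5,3]$, $[5,3,4]$, $[5,3,5]$, the cyclic $(3,3,3,5)$ and $(3,5,3,5)$, or $[5,3,3,3]$. Neither Proposition~\ref{prop:Kazhdan} nor the affine result \cite{Oppenheim} says anything about them, and your appeal to ``Oppenheim's criterion'' with thickness $\geq 5$ is not backed by any available result. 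The missing idea is that these types cannot occur. Each such diagram contains an irreducible spherical rank-$3$ subdiagram of type $H_3$, so the corresponding residues of $X$ would be thick spherical buildings of type $H_3$, and by Tits' classification \cite[Theorem~11.4]{Tits74} there are none. The paper phrases this as: rank-$3$ irreducible residues carry strongly transitive actions, so $(W,S)$ is crystallographic. Only after this reduction is $(W_J,J)\in\{(3,3,3,4),(3,4,3,4),(3,3,3,3,4)\}$, so that Proposition~\ref{prop:Kazhdan} applies with $\qmin\geq 4$.

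Case (1) has a similar defect. The relevant point is that no compact hyperbolic diagram is simply laced: the only simply laced triangle, $(3,3,3)$, is $\tilde A_2$, and every Lannér diagram of rank $4$ or $5$ carries a label $\geq 4$. Hence a minimal non-spherical simply laced $J$ is affine of rank $\geq 3$, and \cite{Oppenheim} applies directly. Your Garland/\.Zuk argument via projective-plane links is only valid for $2$-dimensional complexes. In higher dimension the spectral gap required on codimension-$2$ links grows with the dimension, so ``no thickness bound'' does not follow that way.

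Two minor points. First, $P_J$ need not act faithfully on its residue $R$. You should apply Corollary~\ref{cor:minimal-non-sph:Kazhdan} to the image of $P_J$ in $\Aut(R)$, which is closed with compact kernel, and lift (T) through that compact kernel. Second, ``at least $11$ chambers'' means $\qmin\geq 10$, not $q\geq 11$.
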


If $(W, S)$ is of  minimal non-spherical type, the only minimal non-spherical parabolic subgroup is $G$ itself. Thus, if property (T) holds  for $G$, Theorem~\ref{theoremA} gives decay of matrix coefficients for all weakly mixing representations. If moreover every finite-dimensional unitary representation of $G$ is trivial,   we get the classical Howe--Moore property.

\begin{corintro}
\phantomsection\label{corollaryBB}
Let $X$, $G$ and $(W,S)$ be as in Corollary~\ref{corollaryB}. Suppose moreover that $(W, S)$ is minimal non-spherical. If   every compact quotient of $G$ is trivial, then  $G$ has the Howe--Moore property.
\end{corintro}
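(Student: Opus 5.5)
Since $(W,S)$ is minimal non-spherical, the only minimal non-spherical subset of $S$ is $S$ itself, so $P_S=G$. Corollary~\ref{corollaryB} therefore gives the dichotomy for every unitary representation $\pi$ of $G$: either $\pi$ is mixing, or $\pi$ has a non-zero finite-dimensional subrepresentation. To prove the Howe--Moore property, take $\pi$ without non-zero $G$-fixed vectors. It suffices to show that every finite-dimensional unitary representation of $G$ is a multiple of the trivial one. Granting this, a finite-dimensional subrepresentation of $\pi$ would consist of $G$-fixed vectors, which is excluded. Hence $\pi$ is mixing, and its matrix coefficients lie in $C_0(G)$, as required.

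The core step is to show that a finite-dimensional unitary representation $\sigma\colon G\to \mathcal U(n)$ is trivial. My plan is to show that $\sigma(G)$ is a compact group that is virtually a quotient of $G$ with trivial image. The image of the compact open subgroup $B$ is a compact subgroup of $\mathcal U(n)$, and $\sigma$ is continuous.

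Since $G$ is totally disconnected, I first want the kernel of $\sigma$ to be open. A compact Lie group contains no small non-trivial subgroups. So there is an identity neighbourhood $V\subset\mathcal U(n)$ containing no non-trivial subgroup. Choose a compact open subgroup $U<B$ with $\sigma(U)\subset V$. Then $\sigma(U)$ is a subgroup inside $V$, so it is trivial, and $\ker\sigma\supseteq U$ is open.

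Next I need $\ker\sigma$ to be cocompact. Consider the closure $K$ of $\sigma(G)$, a compact group, and the continuous homomorphism $G\to K$. I would like the image to be compact, so that $G/\ker\sigma$ is a compact quotient; the hypothesis then forces $\ker\sigma=G$ and $\sigma$ is trivial.

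Here is the obstacle. $G/\ker\sigma$ is a discrete group, being a quotient by an open subgroup, and it injects into the compact group $\mathcal U(n)$. The image is a finitely generated linear group, since $G$ is compactly generated: $G=\langle B, \text{finitely many double cosets } BsB\rangle$. Such a group need not be finite. To conclude, I would invoke Kazhdan's property (T) for $G$, which holds in all three cases as established in the paper (Proposition~\ref{prop:Kazhdan} and the affine case). Property (T) passes to the discrete quotient $\Gamma=G/\ker\sigma$. A discrete Kazhdan group that is finitely generated and linear has only finite unitary images? Not in general, so I would instead use the following argument.

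For a discrete Kazhdan group, every finite-dimensional unitary representation has image with finite closure. I expect this to follow from the proof of Corollary~\ref{corollaryB}: the Bekka-amenability is the obstruction, and $\sigma\otimes\bar\sigma$ contains invariant vectors. Concretely, I would argue via Margulis-type rigidity, or more simply by observing that $\Gamma$ is the quotient of the locally compact Kazhdan group $G$ with compact open $B$. Alternatively, I would apply Theorem~\ref{theoremA} to the regular representation of the compact group $\overline{\sigma(G)}$ pulled back to $G$. This representation lives on $L^2(\overline{\sigma(G)})$ and has no invariant vectors orthogonal to constants. By property (T) for $G$, applied in the dichotomy, it must be mixing on that complement. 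However, the matrix coefficients of the pulled-back representation factor through the compact group $\overline{\sigma(G)}$ and are almost periodic, so a mixing coefficient must vanish identically.

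It follows that the orthogonal complement of the constants is zero. Thus $\overline{\sigma(G)}$ is trivial, and so is $\sigma$. This last route avoids compactness of the image altogether: an almost periodic function that is $C_0$ on the non-compact group $G$ is zero. Non-compactness holds since $W$ is infinite. I regard this almost-periodicity argument as the decisive and most delicate step.
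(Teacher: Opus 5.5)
Your reduction to showing that every finite-dimensional unitary representation $\sigma$ of $G$ is trivial (using $P_S=G$ in Corollary~\ref{corollaryB}) is correct and agrees with the paper, and so is your proof that $\ker\sigma$ is open. The decisive last step, however, is circular.

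Put $K=\overline{\sigma(G)}$. By Peter--Weyl, the pulled-back representation of $G$ on $L^2(K)\ominus\mathbf{C}$ is a direct sum of finite-dimensional subrepresentations. So, as soon as it is non-zero, it is Bekka amenable and satisfies the second alternative of both Theorem~\ref{theoremA} and Corollary~\ref{corollaryB}. Neither result therefore gives any decay for it. Property (T) only says that a representation without invariant vectors has a spectral gap; it does not make it mixing. To deduce mixing from the dichotomy, you would need to know that this representation has no finite-dimensional subrepresentation, which is exactly what you are trying to prove.

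A sanity check confirms the failure: this route never uses the hypothesis that compact quotients of $G$ are trivial, yet its conclusion is false without it. For $p\equiv 1 \bmod 3$, the type-preserving subgroup of $\mathrm{PGL}_3(\mathbf{Q}_p)$ acts strongly transitively on a building of the simply laced minimal non-spherical type $\tilde A_2$. Yet the determinant modulo cubes is a non-trivial character of order $3$ on it. Your intermediate claim that finite-dimensional unitary images of discrete Kazhdan groups are finite is also false in general: $\mathrm{SO}(n)$ contains dense Kazhdan subgroups for $n\geq 5$.

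What is missing is a structural fact forcing the open normal subgroup $\ker\sigma$ to be cocompact. This is where the hypothesis on compact quotients enters, and with it your first route goes through. The paper uses Proposition~\ref{prop:simple-by-compact}. Since $(W,S)$ is minimal non-spherical, it is irreducible, so the intersection $G^+$ of all non-trivial closed normal subgroups of $G$ has compact quotient $G/G^+$. Being open, $\ker\sigma$ is closed. It is non-trivial because $G$ is not discrete: $B$ is infinite, since $W$ is infinite and $X$ is thick. Hence $\ker\sigma\supseteq G^+$, so $G/\ker\sigma$ is a compact quotient of $G$, hence trivial. Behind this lies Tits' transitivity lemma: a non-trivial normal subgroup $N$ of $G$ is transitive on chambers, so $G=NB$ and $G/N$ is a continuous image of the compact group $B$.
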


For a group $G$ acting Weyl-transitively on a thick irreducible locally finite building, the absence of non-trivial compact quotients is equivalent to the topological simplicity of $G$, see Proposition~\ref{prop:simple-by-compact}. Concrete examples of locally compact groups satisfying those conditions are provided by complete Kac--Moody groups (see \cite[Chapter~8]{Marquis_book}). 

\begin{corintro}
\phantomsection\label{corollary_HM_KM}
Let $G$ be a complete Kac--Moody group over a finite field   of order~$q$. Suppose that the Weyl group  is of compact hyperbolic type. If $q \geq 11$ (or $q \geq 4$ if $W$ is $3$-spherical), then  $G$ has the Howe--Moore property.
\end{corintro}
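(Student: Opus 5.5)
The plan is to deduce Corollary~\ref{corollary_HM_KM} from Corollary~\ref{corollaryBB}. This requires four inputs: a suitable building on which $G$ acts Weyl-transitively, one of the thickness conditions of Corollary~\ref{corollaryB}, minimal non-sphericity of the type, and the absence of non-trivial compact quotients.

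First, the geometric input. A complete Kac--Moody group $G$ over $\mathbb F_q$ (in the sense of \cite[Chapter~8]{Marquis_book}) acts on the positive building $X_+$ of its twin BN-pair. I would check the following points.
\begin{enumerate}[label=(\alph*)]
\item $X_+$ is thick, locally finite and of finite rank, and every panel has exactly $q+1$ chambers.
\item The completion is constructed so that $G$ is a closed subgroup of $\Aut(X_+)$, or at least so that the kernel of the action is compact and the image is closed.
\item The action is type-preserving and strongly transitive, since $G$ contains the minimal Kac--Moody group $\mathcal G(\mathbb F_q)$, which has a BN-pair. Strong transitivity implies Weyl-transitivity.
\end{enumerate}
If the action has a compact kernel $Z$, I would apply Corollary~\ref{corollaryBB} to $G/Z$. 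Any representation of $G$ without invariant vectors still decays at infinity, because the projection $G\to G/Z$ is proper. The representation may also have a non-trivial restriction to $Z$; in that case I would split off the $Z$-invariant part. The orthogonal complement is more delicate, and I expect the standard fact that $Z$ is central and finite (the kernel of the action is the finite centre) to settle it. Alternatively, I would pass to the quotient by the kernel from the outset, as is customary.

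Second, the type conditions. A Weyl group of compact hyperbolic type is, by definition or standard characterisation, minimal non-spherical: it is infinite, and all its proper standard parabolic subgroups are finite. It is therefore also $2$-spherical. A Kac--Moody Weyl group is crystallographic. So condition~(3) of Corollary~\ref{corollaryB} holds once $q+1\ge 11$, which is implied by $q\ge 11$; this is slightly more than needed. If $W$ is $3$-spherical, condition~(2) requires panels with at least $5$ chambers, i.e.\ $q\ge 4$.

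Third, and this is the step I expect to need the most care, is the absence of non-trivial compact quotients. I would invoke Proposition~\ref{prop:simple-by-compact}: for Weyl-transitive groups on thick irreducible locally finite buildings, this is equivalent to topological simplicity. The building is irreducible because a compact hyperbolic Coxeter system is irreducible. Topological simplicity of complete Kac--Moody groups over finite fields, modulo the finite kernel of the action, is the simplicity theorem of Carbone--Ershov--Ritter, Caprace--R\'emy and Marquis, stated in \cite[Chapter~8]{Marquis_book}. That theorem needs mild hypotheses, for instance irreducibility of the generalised Cartan matrix and sometimes small-$q$ exclusions. These hypotheses hold in our range, where $q\ge 4$.

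With all inputs in place, Corollary~\ref{corollaryBB} yields the Howe--Moore property.
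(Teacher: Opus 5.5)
Your proposal is correct and is essentially the paper's argument. The paper also deduces the statement from Corollary~\ref{corollaryBB}, using that $G$ acts strongly (hence Weyl-) transitively on its building, and that $G$ is topologically simple by \cite[Prop.~11 and Cor.~16]{CapRem} (see also \cite{Mar}), so its only compact quotient is trivial. The paper implicitly takes $G$ to act faithfully, as the closure of the minimal Kac--Moody group in $\Aut(X_+)$, so your discussion of a possible kernel is unnecessary. Proposition~\ref{prop:simple-by-compact} is also unnecessary, since topological simplicity of the non-compact group $G$ already rules out non-trivial compact quotients.
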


To the best of our knowledge, those are the first   examples of non-linear locally compact groups enjoying both Kazhdan's property (T) and the Howe--Moore property. Notice that a building $X$ of compact hyperbolic type is Gromov hyperbolic, hence the group $G$ of Corollary~\ref{corollary_HM_KM} is also Gromov hyperbolic, since it acts geometrically on its building.

If $(W, S)$ is of affine type and rank~$\geq 3$, then the conditions on the thickness of $X$ in Corollary~\ref{corollaryBB} can be softened: indeed, as noted above, Property~(T) is automatic as soon as $X$ is thick in the affine case. Hence we also obtain a new proof of the Howe--Moore property in the classical case, which is conceptually different from the previously known proofs  (see \cite{HoweMoore1979}, \cite[Theorem~2.2.20]{Zimmer1984} and \cite{Ciobotaru}).

\medskip
One of the main interests of proving the Howe--Moore property beyond the affine case is that it allows one to apply results of Creutz--Peterson \cite{CrePet, CrePet2} to  some of the simple Kac--Moody lattices from \cite{CapRem}, and deduce absence of non-trivial invariant random  subgroups (abbreviated IRS). Combining Corollary~\ref{corollary_HM_KM} with the work of Creutz--Peterson, we obtain the following statement, affording the first examples of finitely presented Kazhdan groups without IRS.

\begin{corintro}
\phantomsection\label{corollary_KM}
Let $\Lambda$ be a minimal simply connected split Kac--Moody group over a finite field of order $q$, and $Z$ be its center. Suppose that the Weyl group $W$ of $\Lambda$ is of compact hyperbolic type. If $q \geq 11$ (or $q \geq 4$ if $W$ is $3$-spherical), then   $\Lambda/Z$ has exactly two ergodic IRS's, namely $\delta_e$ and $\delta_{\Lambda/Z}$. 
\end{corintro}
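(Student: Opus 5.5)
The plan is to deduce the statement from Corollary~\ref{corollary_HM_KM} combined with the Creutz--Peterson theory relating IRS rigidity of lattices to the Howe--Moore property of the ambient group. Let $G$ be the complete Kac--Moody group associated with $\Lambda$ over $\mathbb F_q$. By the work of Carbone--Garland and R\'emy, if the Weyl group is of compact hyperbolic type (in particular, infinite and non-affine), then $\Lambda$ embeds diagonally as a lattice in $G_+\times G_-$. When $q$ exceeds the number of chambers in a panel of the spherical residues (the usual R\'emy condition), this is a cocompact irreducible lattice. By \cite{CapRem}, the quotient $\Lambda/Z$ is simple and the center $Z$ is finite. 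Moreover, $Z$ coincides with the kernel of the action on the twin building, so $\Lambda/Z$ is an irreducible lattice in $\bar G_+\times \bar G_-$, where $\bar G_\pm$ denote the quotients of $G_\pm$ by their finite centers. These quotient groups still act Weyl-transitively on the buildings $X_\pm$. The first step is to record these facts, including finite presentability and the fact that $\Lambda/Z$ is Kazhdan, which follows from property (T) of $G_+\times G_-$ via Proposition~\ref{prop:Kazhdan}.

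The second step is to verify the hypotheses of Creutz--Peterson \cite{CrePet, CrePet2} for each factor $\bar G_\pm$. They require each factor to have the Howe--Moore property, which Corollary~\ref{corollary_HM_KM} gives under the stated bound on $q$. They also require property (T), which holds because the thickness is $q+1$ and Proposition~\ref{prop:Kazhdan} applies in the same range. With these inputs, their theorem says that for an irreducible lattice $\Gamma$ in a product of (at least two) such groups, every ergodic IRS of $\Gamma$ is either contained in the (finite) center or of finite index. One may have to pass to a version of their result phrased in terms of characters or stable normal subgroups, since \cite{CrePet2} treats products of Howe--Moore groups with property (T). Here I would simply cite it.

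The last step excludes the non-trivial alternatives. Since $\Lambda/Z$ is simple, and in particular has trivial center and no proper finite-index subgroups other than itself, an ergodic IRS supported on finite-index subgroups must be supported on a single conjugacy class of finite-index subgroups. Each such subgroup has a normal core of finite index, which by simplicity equals $\Lambda/Z$, so the IRS is $\delta_{\Lambda/Z}$. Central IRSs reduce to $\delta_e$. The main obstacle I anticipate is checking that the precise hypotheses of Creutz--Peterson match our setting: irreducibility of the lattice, total disconnectedness, and the condition on the factors (simple with Howe--Moore and (T)). One also has to make sure that the range of $q$ needed for $\Lambda$ to be a lattice is compatible with the bound $q\geq 11$ (respectively $q\geq 4$), which I expect to hold since R\'emy's condition is satisfied in these ranges.
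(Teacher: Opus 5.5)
Your route is the paper's: realize $\Lambda/Z$ as an irreducible lattice in $L_+\times L_-$, where $L_\pm$ is the closure of the image of $\Lambda$ in $\Aut(\Delta_\pm)$. These groups are topologically simple by \cite{CapRem}, so Proposition~\ref{prop:Kazhdan} and Corollary~\ref{corollary_HM_KM} apply. You then invoke Creutz--Peterson and use simplicity of $\Lambda/Z$ to rule out the non-trivial alternatives. However, two steps are wrong or missing as written.

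\textbf{The lattice step.} $\Lambda/Z$ is not a cocompact lattice. The codistance $\delta^*(c_+,c_-)\in W$ is a $\Lambda$-invariant function on $\ch(\Delta_+)\times\ch(\Delta_-)$ taking infinitely many values. On the other hand, $L_+\times L_-$ acts transitively there with compact open stabilizers, so cocompactness would force finitely many $\Lambda$-orbits. Cocompactness is not needed anyway, so simply drop that claim. More seriously, your version of R\'emy's criterion is garbled: it is not a condition comparing $q$ with panel sizes. You also leave the lattice property as something you only ``expect''. The actual criterion is convergence of $\mathbf W(1/q)=\sum_{w\in W}q^{-\ell(w)}$, and it must be verified. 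The paper does this with Lemma~\ref{lem:convergence-radius}: the radius of convergence exceeds $1/2$, so every $q\geq 2$ works. The crude bound $\#\{w:\ell(w)=n\}\leq r(r-1)^{n-1}$ with $r=|S|$ handles $q\geq 11$, and rank~$4$ with $q\geq 4$. It does not handle the rank~$5$ type $(3,3,3,3,4)$ with $q=4$, which needs a genuinely better growth estimate.

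\textbf{The Creutz--Peterson input.} The result that applies here, \cite[Theorem~9.1]{CrePet}, says that an ergodic p.m.p.\ action of the lattice has finite orbits or \emph{finite} stabilizers. It does not say the stabilizers are contained in the center. Your last step only treats central and finite-index IRSs, so the finite-stabilizer case is unaccounted for. It is easy to supply:
\begin{itemize}
\item $\Lambda/Z$ has only countably many finite subgroups, so an ergodic IRS with finite stabilizers is the uniform measure on a finite conjugacy class of some finite subgroup $H$.
\item Then $N_{\Lambda/Z}(H)$ has finite index, hence equals $\Lambda/Z$ by simplicity.
\item So $H$ is a finite normal subgroup of an infinite simple group, i.e.\ trivial.
\end{itemize}
Also, do not reroute through the character-rigidity results of \cite{CrePet2}. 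That would require square-integrability of the lattice $\Lambda_+<L_+$, which is not available (see the remark after Corollary~\ref{cor:KM-sharp}). Your treatment of the finite-orbit case is correct.
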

	
 We refer to Corollary~\ref{cor:KM-sharp} for sharper bounds on $q$ depending on the type of the Weyl group $W$. 
 
 \medskip
The next application concerns unitary representations that are weakly contained in the left regular representation. In that case, the second item of Theorem~\ref{theoremA} is precluded because a parabolic subgroup is amenable if and only if it is of spherical type (see Proposition~\ref{prop:amen-parab} below).

\begin{corintro}
\phantomsection\label{corollaryC}
Let $X$, $G$ and $(W,S)$ be as in Theorem~\ref{theoremA}. Every unitary representation of $G$ weakly contained in the regular representation is mixing. 
\end{corintro}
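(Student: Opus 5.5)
We reduce Corollary~\ref{corollaryC} to Theorem~\ref{theoremA} by showing that alternative (ii) cannot occur when $\pi\weakcontain\lambda_G$. So assume $\pi\weakcontain\lambda_G$, and suppose for contradiction that $\pi$ is not mixing. Theorem~\ref{theoremA} then gives a minimal non-spherical $J\subset S$ such that $\pi|_{P_J}$ is amenable in the sense of Bekka, i.e.\ $\one_{P_J}\weakcontain \pi|_{P_J}\otimes\ol{\pi|_{P_J}}$.

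First we pass weak containment to the tensor square and to the subgroup. Weak containment is preserved under tensor products and complex conjugation, so $\pi\otimes\ol\pi\weakcontain \lambda_G\otimes\ol{\lambda_G}$. By Fell's absorption principle, $\lambda_G\otimes\ol{\lambda_G}$ is a multiple of $\lambda_G$, hence $\pi\otimes\ol\pi\weakcontain\lambda_G$. Restriction to the closed (indeed open) subgroup $P_J$ preserves weak containment. Moreover $\lambda_G|_{P_J}$ is a multiple of $\lambda_{P_J}$: since $P_J$ is open, $L^2(G)$ splits as $\bigoplus_{gP_J\in G/P_J}L^2(gP_J)$, and each summand is equivalent to $\lambda_{P_J}$. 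Combining these steps, $\one_{P_J}\weakcontain(\pi\otimes\ol\pi)|_{P_J}\weakcontain\lambda_{P_J}$. By Hulanicki--Reiter, $P_J$ is then amenable.

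Next we derive the contradiction from Proposition~\ref{prop:amen-parab}, which says that $P_J$ is amenable if and only if $J$ is spherical. Since $J$ is minimal non-spherical, $W_J$ is infinite, so $P_J$ is non-amenable. This contradicts the previous paragraph, so $\pi$ is mixing.

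The only delicate point is the step $\lambda_G|_{P_J}\simeq$ a multiple of $\lambda_{P_J}$ together with Fell absorption for non-unimodular groups. Fell absorption holds for general locally compact groups, with $\ol{\lambda_G}\simeq\lambda_G$. The restriction statement uses only that $P_J$ is open and that Haar measure on $G$ restricts to Haar measure on $P_J$. Everything else is formal once Proposition~\ref{prop:amen-parab} is available.
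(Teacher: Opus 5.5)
Your proof is correct and follows the paper's argument: Theorem~\ref{theoremA} plus weak containment gives $\one_{P_J}\weakcontain\lambda_{P_J}$, Hulanicki's criterion makes $P_J$ amenable, and Proposition~\ref{prop:amen-parab} gives the contradiction. The only difference in route is that you apply Fell absorption on $G$ before restricting, whereas the paper restricts to $P_J$ first and then cites \cite[Corollary~E.2.6(ii)]{BHV}.

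There is one small slip. Under left translations by $P_J$, the invariant splitting of $L^2(G)$ is over right cosets $P_J g$, not left cosets $gP_J$. Each summand $L^2(P_J g)$ is then equivalent to $\lambda_{P_J}$ via $f\mapsto f(\cdot\,g^{-1})$, normalized by the modular function. That normalization is trivial here, since $G$ is generated by compact subgroups and hence unimodular.
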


As mentioned above, the existence of a non-compact proper open subgroup $O < G$ is an obstruction to the Howe--Moore property, since the quasi-regular representation $\lambda_{G/O}$ is not mixing. Applying our results to that representation yields the following. 

\begin{corintro}
\phantomsection\label{corollary_open}
Let $X$, $G$ and $(W,S)$ be as in Corollary~\ref{corollaryB}.  Then every non-compact open subgroup $O \leq G$ contains a conjugate of a finite index open subgroup of some non-spherical parabolic subgroup $P_J$. 
\end{corintro}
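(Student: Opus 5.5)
Let $O \leq G$ be a non-compact open subgroup. We apply Corollary~\ref{corollaryB} to the quasi-regular representation $\pi = \lambda_{G/O}$ on $\ell^2(G/O)$. Since $O$ is open, $G/O$ is discrete. The vector $\delta_{O} \in \ell^2(G/O)$ has diagonal matrix coefficient $g \mapsto \ip{\pi(g)\delta_O}{\delta_O} = \one_O(g)$. Because $O$ is non-compact, this function does not vanish at infinity, so $\pi$ is not mixing. Corollary~\ref{corollaryB} therefore gives a minimal non-spherical $J\subset S$ such that $\pi|_{P_J}$ has a non-zero finite-dimensional subrepresentation.

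Next we translate this into a statement about orbits. The restriction $\pi|_{P_J}$ is the permutation representation of $P_J$ on $\ell^2(G/O) = \bigoplus_{x} \ell^2(P_J\cdot x)$, where $x$ runs over the $P_J$-orbits. We use the standard fact that a permutation representation on $\ell^2$ of a discrete set has a non-zero finite-dimensional subrepresentation if and only if some orbit is finite. To see this, let $V$ be a finite-dimensional invariant subspace and let $\xi \in V$ be non-zero. The set of translates $\{\pi(p)\xi : p\in P_J\}$ lies in a bounded subset of the finite-dimensional space $V$, so it is relatively compact in $\ell^2$. Pick a point $y$ with $\abs{\xi(y)}$ maximal, which exists since $\xi\in\ell^2$. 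Then the level set $\{z : \abs{\xi(z)} = \abs{\xi(y)}\}$ is finite. If the orbit $P_J y$ were infinite, the translates $\pi(p)\xi$ would place mass $\abs{\xi(y)}^2$ at infinitely many distinct points $py$. Passing to a sequence, these translates would converge weakly to $0$ without their norms going to $0$, which contradicts relative compactness in norm. Hence some $P_J$-orbit $P_J\cdot gO$ is finite.

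It remains to extract the subgroup. A finite orbit $P_J\cdot gO$ means that $\Stab_{P_J}(gO) = P_J\cap gOg^{-1}$ has finite index in $P_J$. This stabilizer is open, being the intersection of two open subgroups. Thus $g^{-1}(P_J\cap gOg^{-1})g$ is a conjugate of a finite index open subgroup of $P_J$, and it is contained in $O$. Finally, $P_J$ is non-spherical because $J$ is minimal non-spherical. We expect the orbit argument in the second paragraph to be the only step needing care, and we would simply cite it as a standard fact if the paper allows.
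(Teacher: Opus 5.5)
Your proposal follows the paper's proof step for step. The quasi-regular representation on $\ell^2(G/O)$ is not mixing because of the coefficient $\one_O$. Corollary~\ref{corollaryB} then gives a finite-dimensional subrepresentation of $\pi|_{P_J}$, which forces a finite $P_J$-orbit on $G/O$, and the stabilizer $P_J\cap gOg^{-1}$ conjugated by $g^{-1}$ is the required subgroup.

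The only divergence is in the orbit lemma (Lemma~\ref{lem:fd} in the paper). The paper proves it via \cite[Proposition~A.1.12]{BHV}: a finite-dimensional subrepresentation exists if and only if $\pi\otimes\ol\pi$, the permutation representation on $\ell^2(\Omega\times\Omega)$, has a non-zero invariant vector. That happens exactly when $\Omega\times\Omega$, equivalently $\Omega$, has a finite orbit.

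Your direct compactness argument has one step that does not follow as written. From the $p_ny$ being pairwise distinct you cannot conclude that $\pi(p_n)\xi$ converges weakly to $0$. For instance, if $\xi$ is non-zero at a point $z$ whose images $p_nz$ stay in a finite set, that mass survives in any weak limit.

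The repair is immediate and does not need the maximality of $\abs{\xi(y)}$:
\begin{enumerate}[label=(\alph*)]
\item By relative compactness, pass to a subsequence with $\pi(p_n)\xi\to\eta$ in norm.
\item We have $\abs{(\pi(p_n)\xi)(p_ny)}=\abs{\xi(y)}$, and the sup-norm is dominated by the $\ell^2$-norm.
\item Hence $\abs{\eta(p_ny)}\to\abs{\xi(y)}>0$ at infinitely many distinct points, which contradicts $\eta\in\ell^2$.
\end{enumerate}
Alternatively, simply cite the lemma as the paper does.
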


In the   case where $G$ is a complete Kac--Moody group over a finite field, a detailed description of open subgroups of $G$ is available,  without any restriction on $(W, S)$, see \cite{CapMar}.  In particular Corollary~\ref{corollary_open} is already known in this case. The proof of Corollary~\ref{corollary_open} afforded by the representation theoretic approach in this paper differs significantly from the proof in \cite{CapMar}, that relies on specific algebraic features of Kac--Moody groups.
	
\medskip
The paper is organized as follows. The ingredients on Iwahori--Hecke algebras and on amenability are collected in Section~\ref{sec:Hecke}. Those are used in Section~\ref{sec:chamber-decay} to give a criterion ensuring that the diagonal matrix coefficient defined by a $B$-fixed vector is $C_0$. The passage from $B$-fixed vectors to $B$-finite vectors is described in Section~\ref{sec:B-finite}, valid for arbitrary totally disconnected locally compact groups. Section~\ref{sec:decay} ends with the proof of Theorem~\ref{theoremA}. In Section~\ref{sec:Kazhdan}, we establish mild sufficient conditions on the thickness of a building of compact hyperbolic type to ensure that a Weyl-transitive automorphism group has Kazhdan's property (T).  This is then used to prove Corollaries~\ref{corollaryB}, \ref{corollaryBB} and \ref{corollary_KM}. Another  section is  devoted to the proofs of Corollaries~\ref{corollaryC} and~\ref{corollary_open} before a final section records some open questions.

\section{Hecke operators and amenability}\label{sec:Hecke}

We collect the ingredients used in the proof: some elementary computations in the Iwahori Hecke algebra and a spectral criterion for Bekka amenability.

\subsection{Weyl-transitive actions and Hecke operators}
\label{sec:hecke-preliminaries}

Let $\delta$ denote the Weyl distance on the set of chambers of $X$. Since $G$ is Weyl-transitive, for each $w\in W$ the set $\{g\in G:\delta(C_0,gC_0)=w\}$ is a single $B$-double coset, namely $BwB$. Local finiteness implies that every double coset $BwB$ is compact and has finite Haar measure. We normalize Haar measure $m$ on $G$ by imposing that $m(B)=1$. 

Under the assumption of Weyl-transitivity, the convolution algebra $C_c(B\backslash G/B)$ of complex-valued, continuous, compactly supported bi-$B$-invariant functions on $G$ is naturally isomorphic to the Iwahori--Hecke algebra of $W$ defined by the $|S|$-tuple $(q_s)_{s \in S}$, where $q_s=m(BsB)$, see \cite[\S19.1]{Davis} and \cite[CH.~IV, Exer.~23]{Bour}. Observe that $q_s$ is the number of left $B$-cosets in $BsB$, i.e. the number of chambers at Weyl distance $s$ from any given chamber $C$. In particular, every $s$-panel of $X$ contains exactly $q_s+1$  chambers. Since $X$ is thick by hypothesis, we have $q_s\geq2$ for all $s \in S$. 

The algebra $C_c(B\backslash G/B)$ admits  the set $(\mathbf 1_{BwB})_{ w \in W}$ of indicator functions of double $B$-cosets as a basis. We shall renormalize the basis vectors as follows. For $w\in W$, let $q_w=m(BwB)$ and let $T_w$ denote the  bi-$B$-invariant probability measure 
$$T_w=\frac 1 {q_w}\mathbf 1_{BwB}\,dm$$ 
supported on $BwB$. We shall identify the measure $T_w$ with its density; we may thus view $T_w$ as an element of $C_c(B\backslash G/B)$. 
Given  a unitary representation $\pi \colon G\to\cU(\cH)$, we have 
$$\pi(T_w)=q_w^{-1}\int_{BwB}\pi(g)\,dg,$$ and
$$\pi(T_w)^*=\pi(T_{w^{-1}}).$$ 
By \cite[Lemma~19.1.2]{Davis}, we have  $T_u*T_v=T_{uv}$ for all $u, v \in W$ with  $\ell(uv)=\ell(u)+\ell(v)$. In particular, if $w=s_1\cdots s_n$ is reduced, then $T_w=T_{s_1}*\cdots*T_{s_n}$.

In view of \cite[Proposition~19.1.1 and Lemma~19.1.5]{Davis}, the normalized quadratic relation in the Hecke algebra is
\begin{equation}\label{eq:quadratic}
        T_s*T_s=q_s^{-1}T_e+(1-q_s^{-1})T_s.
\end{equation}
Note that all structure constants in the normalized basis $(T_w)_{w\in W}$ are non-negative.

\begin{lemma}\label{lem:positive-group-product}
Let $t_1,\ldots,t_m\in S$. In the expansion of $T_{t_1}*\cdots*T_{t_m}$ in the normalized basis $(T_w)_{w\in W}$, the coefficient of $T_{t_1\cdots t_m}$ is strictly positive.
\end{lemma}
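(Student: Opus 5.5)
We argue by induction on $m$, keeping track of the support of the product: set $F_k=T_{t_1}*\cdots*T_{t_k}$ and write $F_k=\sum_{w\in W} a^{(k)}_w T_w$ with $a^{(k)}_w\ge 0$. Non-negativity of all coefficients follows from the non-negativity of the structure constants in the normalized basis noted above. The claim to be proved is $a^{(k)}_{t_1\cdots t_k}>0$ for every $k$. The case $m=1$ is trivial, since $F_1=T_{t_1}$.

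For the inductive step, we first record how right multiplication by $T_s$ acts on a single basis element. If $\ell(ws)=\ell(w)+1$, then by \cite[Lemma~19.1.2]{Davis} we have $T_w*T_s=T_{ws}$. If $\ell(ws)=\ell(w)-1$, we write $w=vs$ with $\ell(v)=\ell(w)-1$. Using $T_w=T_v*T_s$ and the quadratic relation \eqref{eq:quadratic}, we get
\[
T_w*T_s=T_v*(q_s^{-1}T_e+(1-q_s^{-1})T_s)=q_s^{-1}T_{ws}+(1-q_s^{-1})T_w .
\]
Since $q_s\ge 2$ by thickness, the coefficient $q_s^{-1}$ of $T_{ws}$ is strictly positive. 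In either case, $T_w*T_s$ contains $T_{ws}$ with a strictly positive coefficient, and all other coefficients are non-negative.

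Now $F_{k+1}=F_k*T_{t_{k+1}}=\sum_w a^{(k)}_w\, T_w*T_{t_{k+1}}$. Every summand has non-negative coefficients, so no cancellation can occur. The term $w=t_1\cdots t_k$ has $a^{(k)}_w>0$ by induction, and it contributes a strictly positive multiple of $T_{t_1\cdots t_{k+1}}$. Hence $a^{(k+1)}_{t_1\cdots t_{k+1}}>0$, which completes the induction.

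The only genuinely delicate point is the absence of cancellation. It rests entirely on the positivity of all structure constants, which in turn follows from the two multiplication rules displayed above, because every product $T_u*T_v$ can be computed by iterating them. The reduced (non-cancelling) case is the multiplicativity \cite[Lemma~19.1.2]{Davis}, and the other case uses only the quadratic relation \eqref{eq:quadratic}.
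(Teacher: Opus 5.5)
Your proof is correct and follows essentially the same route as the paper: induction on $m$, a case split according to whether $\ell(ut_m)=\ell(u)\pm1$, the quadratic relation in the descent case, and non-negativity of the remaining structure constants to rule out cancellation. Where the paper only cites Davis for $T_w*T_s=q_s^{-1}T_{ws}+(1-q_s^{-1})T_w$, you derive it directly by writing $w=vs$, which is welcome. One small correction: thickness is not needed for $q_s^{-1}>0$, since $q_s$ is a positive finite number in any case.
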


\begin{proof}
The proof is by induction on $m$. For $m=1$ there is nothing to prove. Suppose that the coefficient of $T_u$, where $u=t_1\cdots t_{m-1}$, is positive in $T_{t_1}*\cdots*T_{t_{m-1}}$. If $\ell(ut_m)=\ell(u)+1$, then $T_u*T_{t_m}=T_{ut_m}$. If $\ell(ut_m)=\ell(u)-1$, then the quadratic relation gives
\[
        T_u*T_{t_m}=q_{t_m}^{-1}T_{ut_m}+(1-q_{t_m}^{-1})T_u.
\]
Indeed, this can be established directly by conjugating the simple case, or by invoking  again \cite[Proposition~19.1.1]{Davis}. Thus in either case the coefficient of $T_{ut_m}=T_{t_1\cdots t_m}$ is positive. Since all other structure constants are non-negative, the claim follows.
\end{proof}

\subsection{The computation of the Markov operator}
\label{sec:markov-computation}

The following computation is the central point of the proof.

\begin{lemma}\label{lem:hecke-markov-parabolic}
Let $w=s_1\cdots s_n$ be a reduced word in $W$, and put $J=\supp(w)=\{s_1,\ldots,s_n\}\subset S$. Then $T_w^**T_w$ is a symmetric compactly supported probability measure on $P_J$. Moreover, the closed subgroup generated by $\supp(T_w^**T_w)$ is $P_J$.
\end{lemma}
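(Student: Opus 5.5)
Write $\mu=T_w^{*}*T_w = T_{w^{-1}}*T_w$, viewed as the convolution of two probability measures. It is again a probability measure. It is symmetric because the involution $f\mapsto f^*$, with $f^*(g)=\ol{f(g^{-1})}$ (unimodularity of $G$ is not needed since $B$ is compact open and the double cosets have finite measure), sends $\mu$ to $(T_w^**T_w)^*=T_w^**T_w$. It is compactly supported because $BwB$ is compact, so $\supp\mu\subset Bw^{-1}B\cdot BwB$ is compact.

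For the support inclusion, I would use the reduced decomposition. We have $T_{w^{-1}}=T_{s_n}*\cdots*T_{s_1}$ and $T_w=T_{s_1}*\cdots*T_{s_n}$, so
\[
\mu=T_{s_n}*\cdots*T_{s_1}*T_{s_1}*\cdots*T_{s_n}.
\]
Each $T_{s_i}$ is supported on $Bs_iB\subset P_J$, and $P_J$ is a closed (indeed open) subgroup. Hence $\supp\mu\subset P_J$. Alternatively, expand via the Hecke relations: every $T_v$ appearing has $v\in W_J$, since the structure constants only produce words in the letters $s_i$.

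For generation, let $H$ be the closed subgroup generated by $\supp\mu$. Since $\mu$ is bi-$B$-invariant, $\supp\mu$ is a union of double cosets $BvB$. By the expansion above and Lemma~\ref{lem:positive-group-product}, $\mu=\sum_{v}c_vT_v$ with $c_v\ge0$, so $\supp\mu=\bigcup_{c_v>0}BvB$.

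The key step is to show that $B\subset H$ and $Bs_iB\subset H$ for each $i$.
\begin{itemize}
\item For $B$: the quadratic relation gives $T_{s_1}*T_{s_1}$ a positive coefficient on $T_e$. Expanding $\mu$ from the middle outward, with all structure constants non-negative, the coefficient of $T_e$ in $\mu$ is positive. Equivalently, apply Lemma~\ref{lem:positive-group-product} to the word $s_n\cdots s_1s_1\cdots s_n$, whose product is $e$. Hence $B\subset\supp\mu$.
\item For $s_i$: apply Lemma~\ref{lem:positive-group-product} to the same word $s_n\cdots s_1s_1\cdots s_n$, but ask about the coefficient of $T_{s_i}$. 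Here I would instead pick a cleverly chosen subword. Consider the product $T_{s_n}*\cdots*T_{s_1}*T_{s_1}*\cdots*T_{s_n}$ and, at the middle $T_{s_1}*T_{s_1}$, keep the $T_{s_1}$ term, which has coefficient $1-q_{s_1}^{-1}>0$ because $q_{s_1}\ge2$. This shows $w^{-1}s_1w'$-type elements occur. A cleaner route is to take, for each $i$, the term obtained by cancelling inward.
\end{itemize}

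I expect that bookkeeping to be the main obstacle. My simplest plan avoids it. Since $B\subset H$ and $H$ is a group, it suffices that $H\supset BwB$ and $Bw^{-1}B$, together with the observation $H\supset Bs_1B$ from the $T_{s_1}$ term mentioned above. Then induct: having $Bs_1\cdots s_kB\subset H$, the middle reduction $T_{s_k}*T_{s_k}\ni T_{s_k}$ term shows a positive coefficient for $T_{s_k\cdots s_1s_1\cdots s_k}$-like words, and one strips letters.

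A more robust alternative is to prove that $BvB\subset\supp\mu$ for $v=s_i$ directly. Choose, in each pair $T_{s_j}*T_{s_j}$ as it appears after cancelling, the $T_e$ term for $j<i$ and the $T_{s_i}$ term at $j=i$. After that reduction, the remaining product is $T_{s_n}*\cdots*T_{s_{i+1}}*T_{s_i}*T_{s_{i+1}}*\cdots*T_{s_n}$, which by Lemma~\ref{lem:positive-group-product} has positive coefficient on $T_{u^{-1}s_iu}$ with $u=s_{i+1}\cdots s_n$. Combined with $BuB\subset H$ by descending induction on $i$ from $n$, this gives $Bs_iB\subset H$.

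Finally, $P_J$ is generated by $B$ and the $Bs_iB$ for $s_i\in J$, by the $BN$-type decomposition $P_J=\bigsqcup_{v\in W_J}BvB$ together with $BvB\subset Bt_1B\cdots Bt_kB$ for reduced $v=t_1\cdots t_k$. Hence $H=P_J$.
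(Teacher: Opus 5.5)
Your final argument (the ``more robust alternative'') is essentially the paper's proof. It uses the same choice of the $T_e$-term for $j<i$ and the $T_{s_i}$-term at $j=i$, applies Lemma~\ref{lem:positive-group-product} to put $Br_iB$ with $r_i=u_i^{-1}s_iu_i$ into the support, and then runs a descending induction on $i$, carried out on double cosets in $H$ rather than, as in the paper, in $W$ to show that the $r_i$ generate $W_J$ before lifting. The earlier ``simplest plan'' should be deleted: the middle $T_{s_1}$-term gives $Bw^{-1}s_1wB$ rather than $Bs_1B$, and $BwB\subset H$ is not available a priori.
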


\begin{proof}
Since $w\in W_J$, both $T_w$ and $T_w^*$ are supported on $P_J$, and hence $T_w^**T_w$ is a probability measure supported on $P_J$. It is symmetric because it has the form $\check\mu*\mu$.
It remains to show that its support generates $P_J$. 
Since $w=s_1\cdots s_n$ is reduced,
\[
        T_w^**T_w
        =T_{s_n}*\cdots*T_{s_1}*T_{s_1}*\cdots*T_{s_n}.
\]
For $1\leq i\leq n$, set
\[
        u_i=s_{i+1}\cdots s_n,
        \qquad
        r_i=u_i^{-1}s_i u_i=s_n\cdots s_{i+1}s_i s_{i+1}\cdots s_n.
\]
We claim that the basis element $T_{r_i}$ occurs with positive coefficient in the expansion of $T_w^**T_w$.
Indeed, expand the middle pairs successively. For $j<i$, choose the positive $T_e$-term in the quadratic relation \eqref{eq:quadratic} for $T_{s_j}*T_{s_j}$; for $j=i$, choose the positive $T_{s_i}$-term. What remains contains the product
\[
        T_{s_n}*\cdots*T_{s_{i+1}}*T_{s_i}*T_{s_{i+1}}*\cdots*T_{s_n}.
\]
By Lemma \ref{lem:positive-group-product}, the coefficient of $T_{r_i}$ in this product is positive. Since all structure constants are non-negative, $T_{r_i}$ occurs with positive coefficient in $T_w^**T_w$.

The elements $r_1,\ldots,r_n$ generate $W_J$. This follows by descending induction on $i$: $r_n=s_n$, and if $s_{i+1},\ldots,s_n$ are already in $\langle r_i,\ldots,r_n\rangle$, then $u_i$ is in that subgroup and $s_i=u_ir_i u_i^{-1}$ is also in it.

Since the double cosets $Br_iB$ occur in the support and the Weyl elements $r_i$ generate $W_J$, Lemma \ref{lem:positive-group-product} implies that the subgroup generated by the support contains $BuB$ for every $u\in W_J$. Hence it contains $\bigsqcup_{u\in W_J}BuB=P_J$. The reverse inclusion was already observed. Thus the closed subgroup generated by the support is $P_J$.
\end{proof}

\subsection{Amenability and a spectral criterion}
\label{sec:bekka}

We recall the two representation-theoretic facts used below. 

First, a unitary representation $\rho:G\to\cU(\cH)$ is amenable in Bekka's sense if and only if $\one_G\weakcontain \rho\otimes\ol\rho$; see Bekka \cite{BekkaAmenable} (see also  \cite[Appendix G]{BHV} for a detailed discussion of amenability). In particular, if $\rho$ contains a non-zero finite-dimensional subrepresentation, then $\one_G\leq  \rho\otimes\ol\rho$ by \cite[Proposition~A.1.12]{BHV}, so that $\rho$  is amenable in the sense of Bekka. 

Second, we shall use the following spectral criterion, see \cite[Proposition G.4.2]{BHV}.

\begin{proposition}[Bekka--de la Harpe--Valette]\label{prop:BHV}
Let $G$ be a locally compact group, let $\mu$ be an absolutely continuous probability measure on $G$, and let $\rho:G\to\cU(\cK)$ be a   unitary representation. Assume that the closed subgroup generated by $\supp(\check\mu*\mu)$ is $G$. If $\norm{\rho(\mu)}=1$, then $\one_G\weakcontain \rho$.
\end{proposition}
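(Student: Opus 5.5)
Write $\nu=\check\mu*\mu$ and let $H$ be the closed subgroup generated by $\supp(\nu)$, which equals $G$ by hypothesis. The operator $A=\rho(\nu)=\rho(\mu)^*\rho(\mu)$ is positive and self-adjoint with $\norm{A}=\norm{\rho(\mu)}^2=1$. Hence $1\in\sigma(A)$, and there exist unit vectors $\xi_n\in\cK$ with $\norm{A\xi_n-\xi_n}\to0$. Since $A$ is a contraction, this gives
\[
\int_G \operatorname{Re}\ip{\rho(g)\xi_n}{\xi_n}\,d\nu(g)=\operatorname{Re}\ip{A\xi_n}{\xi_n}\to 1 .
\]
Because $\nu$ is a probability measure and each integrand is at most $1$, the functions $g\mapsto \norm{\rho(g)\xi_n-\xi_n}^2=2-2\operatorname{Re}\ip{\rho(g)\xi_n}{\xi_n}$ tend to $0$ in $L^1(\nu)$.

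The aim is to upgrade this to almost invariance uniformly on compact subsets of $G$. The natural route is to work in $L^1(\nu)$ and use absolute continuity. Write $\nu=f\,dm$ with $f\in L^1(G)$. The $L^1$ convergence yields $\int\norm{\rho(g)\xi_n-\xi_n}^2 f(g)\,dg\to0$.

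\begin{itemize}
\item \textbf{Almost invariance on a neighbourhood.} Choose $\epsilon>0$ and a compact set $K\subset\{f>\epsilon\}$ of positive measure. On $K$ the vectors are almost invariant in $L^2(m|_K)$-average.
\item \textbf{Upgrade to uniform control.} Use that $K^{-1}K$ contains an identity neighbourhood (Steinhaus) and that products of almost-invariant elements are almost invariant: $\norm{\rho(gh)\xi-\xi}\le\norm{\rho(g)\xi-\xi}+\norm{\rho(h)\xi-\xi}$. For this one needs the averaged smallness to give smallness at most points of $K$ for each $n$, a Chebyshev-type estimate. Then $\{g:\norm{\rho(g)\xi_n-\xi_n}<\delta\}$ contains most of $K$. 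The intersection $gK'\cap K'$ is nonempty for $g$ near the identity, so every $g$ in a fixed identity neighbourhood $U$ is a product $k_1^{-1}k_2$ of two good points. The same argument applies on the support of $\nu$ itself: the set of $g\in\supp\nu$ which are $\delta$-good has $\nu$-measure tending to $1$, since $\nu$ is absolutely continuous with respect to $m$.
\end{itemize}

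\textbf{Main obstacle.} Passing from "the good set has nearly full $\nu$-measure" to "all of a compact generating set is good" requires care. I would handle it as follows. Any compact $C\subset G$ is covered by finitely many products $(g_1U)^{\pm1}\cdots(g_kU)^{\pm1}$ with $g_i$ in the support of $\nu$; this uses that $\supp\nu$ generates a dense subgroup, so finitely many products of translates of an identity neighbourhood cover $C$. For each point $x\in g_iU$ near a good point of $\supp\nu$, the same $k_1^{-1}k_2$ trick shows that $x$ is $O(\delta)$-good, because good points have positive density around $g_i$. Bounding word lengths uniformly on $C$ then gives $\sup_{g\in C}\norm{\rho(g)\xi_n-\xi_n}\to0$. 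This is exactly $\one_G\weakcontain\rho$. The symmetrization $\check\mu*\mu$ is what makes the averaged invariance equation available, and it is the reason the generation hypothesis is imposed on $\supp(\check\mu*\mu)$ rather than on $\supp\mu$.
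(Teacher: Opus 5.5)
The paper does not prove this statement; it quotes it from \cite[Proposition G.4.2]{BHV}. Your argument is a correct, self-contained proof, and it is more hands-on than the usual route.

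The usual route works with the normalized positive definite functions $\varphi_n(g)=\ip{\rho(g)\xi_n}{\xi_n}$. Absolute continuity of $\nu=\check\mu*\mu$ makes $\varphi\mapsto\int\varphi\,d\nu$ weak-$*$ continuous on $L^\infty(G)$, so any weak-$*$ cluster point $\varphi$ equals $1$ on $\supp\nu$. Since $\{\varphi=1\}$ is a closed subgroup (the stabilizer of the cyclic vector), $\varphi\equiv 1$. A Raikov-type theorem then upgrades weak-$*$ convergence of normalized positive definite functions to uniform convergence on compacta.

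You replace this machinery by a quantitative Steinhaus argument. Absolute continuity is used only once, to get uniform almost invariance on one fixed identity neighbourhood $U$. The generation hypothesis is used only through density of the abstract subgroup generated by $\supp\nu$, which lets you cover each compact $C$ by finitely many sets $\gamma U$, with $\gamma$ a word of bounded length in $(\supp\nu)^{\pm1}$. This buys an elementary proof with explicit control: the defect on $C$ is at most a constant times $\delta$, the constant depending only on these word lengths. The positive-definite-function route is shorter once Raikov's theorem is available.

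Two steps deserve a cleaner write-up.

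First, $U$ must not depend on $n$ or $\delta$. Put $K_n=\{k\in K:\norm{\rho(k)\xi_n-\xi_n}<\delta\}$. Then $m(gK_n\cap K_n)\geq m(gK\cap K)-2m(K\setminus K_n)$, and $g\mapsto m(gK\cap K)$ is continuous with value $m(K)$ at $e$. So the single open set $U=\{g: m(gK\cap K)>m(K)/2\}$ works for all large $n$. Also, with left Haar measure, $gk_1=k_2$ gives $g=k_2k_1^{-1}$ rather than $k_1^{-1}k_2$; this is harmless.

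Second, at the points $g_i\in\supp\nu$ the mechanism is not a second Steinhaus argument, and your phrase about good points having positive density around $g_i$ should be replaced by the following. We have $\nu(g_iU^{-1})>0$, while by Chebyshev the $\delta$-bad set has $\nu$-measure tending to $0$. So for large $n$ some $\delta$-good $h$ lies in $g_iU^{-1}$, and then $g_i=hu$ with $u\in U$ is $3\delta$-good. Inverses and products of good elements are good by the triangle inequality, which gives uniform control on each $\gamma U$ and hence on $C$.
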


\begin{corollary}\label{cor:norm-one-bekka}
Let $G$ and $W$ be as in Theorem~\ref{theoremA}. Let $w\in W$, let $J=\supp(w)$, and let $\rho:P_J\to\cU(\cK)$ be a   unitary representation. If $\norm{\rho(T_w)}=1$, then $\one_{P_J}\weakcontain \rho$.
\end{corollary}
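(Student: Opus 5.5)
The plan is to apply Proposition~\ref{prop:BHV} with the locally compact group $P_J$ in place of $G$ and with $\mu=T_w$, viewed as a probability measure on $P_J$. To do this we must check three hypotheses.

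First, $T_w$ is a probability measure on $P_J$. Since $w\in W_J$, the double coset $BwB$ is contained in $P_J=\bigsqcup_{u\in W_J}BuB$. Because $P_J$ is an open subgroup of $G$, the restriction of the Haar measure $m$ to $P_J$ is a Haar measure of $P_J$. Hence $T_w=q_w^{-1}\one_{BwB}\,dm$ is an absolutely continuous probability measure on $P_J$.

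Second, the representation $\rho(T_w)$ is the same operator whether we integrate over $G$ or over $P_J$. For a representation $\rho$ of $P_J$, we have $\rho(T_w)=q_w^{-1}\int_{BwB}\rho(g)\,dg$, and this integral only involves elements of $P_J$.

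Third, the closed subgroup of $P_J$ generated by $\supp(\check T_w*T_w)=\supp(T_w^**T_w)$ must be all of $P_J$. This is exactly Lemma~\ref{lem:hecke-markov-parabolic}, once we write $w$ as a reduced word $s_1\cdots s_n$, so that $\supp(w)=\{s_1,\ldots,s_n\}=J$. The measure $T_w^*$ is the measure $\check{T_w}$, since $\pi(T_w)^*=\pi(T_{w^{-1}})$ and $T_{w^{-1}}$ is the image of $T_w$ under inversion. This uses the unimodularity of $G$: $m(Bw^{-1}B)=m((BwB)^{-1})=q_w$, which holds because $G$ contains a compact open subgroup and double coset measures count cosets symmetrically in the Weyl-transitive setting.

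With these checks done, Proposition~\ref{prop:BHV} applied to $(P_J,T_w,\rho)$ gives $\one_{P_J}\weakcontain\rho$ whenever $\norm{\rho(T_w)}=1$.

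The only point requiring care is identifying $\check T_w$ with $T_{w^{-1}}$. Concretely, this means checking that $q_{w^{-1}}=q_w$ and that the density is inverted correctly. If one prefers to avoid unimodularity, one can instead note that $\check\mu*\mu$ has support $(BwB)^{-1}BwB$ regardless of normalization. The generation statement in Lemma~\ref{lem:hecke-markov-parabolic} concerns exactly this support, so the normalization plays no role. The trivial case $w=e$, where $J=\emptyset$ and $P_J=B$, is covered by the same argument.
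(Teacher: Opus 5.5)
Your proposal is correct and follows the paper's proof: view $T_w$ as an absolutely continuous probability measure on the open subgroup $P_J$, use Lemma~\ref{lem:hecke-markov-parabolic} to see that $\supp(T_w^* * T_w)$ generates $P_J$, and apply Proposition~\ref{prop:BHV}. One small point: having a compact open subgroup does not by itself give unimodularity (the actual reason is $q_{w^{-1}}=q_w$), but your remark that only the support $(BwB)^{-1}BwB$ of $\check T_w * T_w$ matters makes this moot.
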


\begin{proof}
The measure $T_w$, viewed as a probability measure on $P_J$, is absolutely continuous with respect to Haar measure on $P_J$. By Lemma \ref{lem:hecke-markov-parabolic}, the support of $T_w^**T_w$ generates $P_J$. Proposition \ref{prop:BHV} applies.
\end{proof}

\section{Decay of matrix coefficients}\label{sec:decay}

This section contains the proof of chamber-spherical decay and its extension to $B$-finite vectors.

\subsection{Chamber-spherical decay for tensor representations}
\label{sec:chamber-decay}

Assume that $W$ is non-spherical.  Let $$M=\max\{\ell(u):u\in W_I,\ I\subset S,\ W_I\text{ spherical}\}.$$ The maximum is finite because $S$ is finite. Put $L=M+1$ and $\mathcal B_L=\{b\in W:\ell(b)=L\}$. Every $b\in\mathcal B_L$ has non-spherical support: otherwise $b$ would lie in a finite parabolic subgroup $W_{\supp(b)}$ and would have length at most $M$.

\begin{proposition}\label{prop:tensor-hecke-decay}
Let $\pi \colon G\to\cU(\cH)$ be a   unitary representation, and put $\sigma=\pi\otimes\ol\pi$. Assume that $\pi|_{P_J}$ is not amenable in the sense of Bekka for every minimal non-spherical subset $J\subset S$. Then $\norm{\sigma(T_w)}\longrightarrow0$ as $\ell(w)\to\infty$. In particular, all matrix coefficients of $\sigma$ defined by $B$-fixed vectors are $C_0$.  
\end{proposition}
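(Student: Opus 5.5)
The strategy is to get a uniform bound $\theta<1$ on $\norm{\sigma(T_b)}$ for the finitely many $b\in\mathcal B_L$. We then factor a long $w$ into blocks of length $L$ and multiply these bounds.

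\emph{Step 1: a uniform bound on the blocks.} Fix $b\in\mathcal B_L$. Its support $\supp(b)$ is non-spherical, so it contains a minimal non-spherical subset $J$. We want a $b'$ with $\supp(b')\subseteq J$ and $\norm{\sigma(T_b)}\le\norm{\sigma(T_{b'})}$. Directly, I would use a reduced word for $b$ and submultiplicativity, but that only gives bounds on subwords, which is the wrong direction. So instead apply Corollary~\ref{cor:norm-one-bekka} to $\rho=\sigma|_{P_K}$ with $K=\supp(b)$.

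Assume $\norm{\sigma(T_b)}=1$. Then $\one_{P_K}\weakcontain \sigma|_{P_K}=(\pi\otimes\ol\pi)|_{P_K}$. Restricting to the subgroup $P_J\le P_K$ preserves weak containment, so $\one_{P_J}\weakcontain (\pi|_{P_J})\otimes\ol{(\pi|_{P_J})}$. This says $\pi|_{P_J}$ is Bekka amenable, contradicting the hypothesis.

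Hence $\norm{\sigma(T_b)}<1$ for each $b\in\mathcal B_L$. Since $\mathcal B_L$ is finite ($S$ is finite), we may set $\theta=\max_{b\in\mathcal B_L}\norm{\sigma(T_b)}<1$.

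\emph{Step 2: factoring long words.} Let $\ell(w)\ge kL$ and write a reduced expression $w=b_1\cdots b_k\,v$ with $\ell(b_i)=L$ and $\ell(v)<L$. Each $b_i$ is a reduced subword of length $L$, so $b_i\in\mathcal B_L$. Lengths add, so $T_w=T_{b_1}*\cdots*T_{b_k}*T_v$ by \cite[Lemma~19.1.2]{Davis}. Since $\sigma(T_v)$ is a contraction ($T_v$ is a probability measure), we get $\norm{\sigma(T_w)}\le\theta^k\le\theta^{\lfloor \ell(w)/L\rfloor}\to0$.

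\emph{Step 3: matrix coefficients.} Let $\xi,\eta$ be $B$-fixed vectors and $g\in BwB$. By bi-$B$-invariance the function $g\mapsto\ip{\sigma(g)\xi}{\eta}$ is constant on $BwB$. Averaging over $BwB$ gives $\ip{\sigma(g)\xi}{\eta}=\ip{\sigma(T_w)\xi}{\eta}$, which is bounded by $\norm{\sigma(T_w)}\norm{\xi}\norm{\eta}$. Leaving compact sets means leaving finite unions of double cosets, i.e.\ $\ell(w)\to\infty$. Hence the coefficient is $C_0$.

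\emph{Main obstacle.} The key point is the passage in Step 1 from norm one on $P_{\supp(b)}$ to Bekka amenability on the minimal non-spherical $P_J$. Two facts must be checked:
\begin{itemize}
\item Weak containment of $\one$ in $\sigma$ restricts to subgroups.
\item $(\pi\otimes\ol\pi)|_{P_J}$ is exactly the $\rho\otimes\ol\rho$ of Bekka's definition for $\rho=\pi|_{P_J}$.
\end{itemize}
Both are standard. One also needs Corollary~\ref{cor:norm-one-bekka} to apply to the restriction $\sigma|_{P_K}$, which is fine since $T_b$ is supported in $P_K$.
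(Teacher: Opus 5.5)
Your proof is correct and is essentially the paper's argument: apply Corollary~\ref{cor:norm-one-bekka} to $\sigma|_{P_K}$ with $K=\supp(b)$, then restrict the weak containment to a minimal non-spherical $P_J\le P_K$ to get $\theta<1$ on the finite set $\mathcal B_L$, then factor reduced words into length-$L$ blocks, and finally identify $B$-fixed coefficients on $BwB$ with $\ip{\sigma(T_w)\xi}{\eta}$. The false start about finding $b'$ with $\supp(b')\subseteq J$ is unnecessary and can be deleted.
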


\begin{proof}
Let $b\in\mathcal B_L$, and put $K=\supp(b)$. Then $K$ is non-spherical, so it contains a minimal non-spherical subset $J$. If $\norm{\sigma(T_b)}=1$, then Corollary \ref{cor:norm-one-bekka}, applied to $\sigma|_{P_K}$, gives $\one_{P_K}\weakcontain \sigma|_{P_K}$. Restricting to the closed subgroup $P_J<P_K$ yields $\one_{P_J}\weakcontain \sigma|_{P_J} = (\pi|_{P_J})\otimes\ol{(\pi|_{P_J})}$, contrary to the hypothesis. Hence $\norm{\sigma(T_b)}<1$ for every $b\in\mathcal B_L$.

Since $\mathcal B_L$ is finite, there is a number $\theta<1$ such that $\norm{\sigma(T_b)}\leq\theta$ for all $b\in\mathcal B_L$.
Choose a reduced expression for $w\in W$ and decompose it into consecutive blocks of length $L$ and a remainder:
\[
        w=b_1\cdots b_k r,
        \qquad b_i\in\mathcal B_L,
        \qquad \ell(r)<L,
\]
where all products are length-additive. Therefore $T_w=T_{b_1}*\cdots*T_{b_k}*T_r$. Hence $\norm{\sigma(T_w)}\leq \theta^k$. As $\ell(w)\to\infty$, one has $k \geq \ell(w)/L -1 \to\infty$, and the first assertion follows.

Let now $g_n\to\infty$ in $G$, and write $g_n\in Bw_nB$. Since $B$ is compact, we have $\ell(w_n)\to\infty$. 
If $\Xi,\Upsilon \in \cH \otimes \overline{\cH}$ are $B$-fixed vectors for $\sigma$, then
\[
        |\ip{\sigma(g_n)\Xi}{\Upsilon}|
        =|\ip{\sigma(T_{w_n})\Xi}{\Upsilon}| \leq \| \sigma(T_{w_n})\| \| \Xi \| \| \Upsilon \| 
        \to 0 
\]
in view of the first assertion. This confirms that all matrix coefficients of $\sigma$ defined by $B$-fixed vectors are $C_0$.  
\end{proof}

\subsection{Extension to $B$-finite vectors}
\label{sec:B-finite}

Our next goal is to show that if the diagonal matrix coefficient defined by a $B$-fixed vector in the tensor representation $\pi \otimes \ol \pi$ is $C_0$, then the representation $\pi$ is mixing. This part applies to an arbitrary pair $(G, B)$ consisting of a totally disconnected locally compact group $G$ and a compact open subgroup $B$, see Proposition~\ref{prop:Bfixed-tensor-implies-C0} below.

\medskip

Let $\cH$ be a Hilbert space. We denote by $\HS(\cH)$ the Hilbert space of Hilbert--Schmidt operators on $\cH$, with inner product $\ip{A}{B}_{\HS}=\sum_i \ip{A(e_i)}{B(e_i)}$, where $(e_i)$ is an orthonormal basis of $\cH$. The definition is independent of the chosen basis.

\begin{lemma}\label{lem:HS-projections}
Let $V_1,V_2$ be finite-dimensional subspaces of $\cH$, and let $E_i$ be the orthogonal projection onto $V_i$. Then, for all unit vectors $v_i\in V_i$, $\abs{\ip{v_1}{v_2}}^2\leq \ip{E_1}{E_2}_{\HS}$.
\end{lemma}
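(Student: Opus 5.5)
The plan is to compute the Hilbert--Schmidt inner product $\ip{E_1}{E_2}_{\HS}$ in an orthonormal basis adapted to $V_1$, and compare it with the single term coming from $v_1$. Extend the unit vector $v_1$ to an orthonormal basis $f_1=v_1,f_2,\ldots,f_k$ of $V_1$. Then complete this to an orthonormal basis $(e_i)$ of $\cH$ by adding an orthonormal basis of $V_1^\perp$. Since the definition of $\ip{\cdot}{\cdot}_{\HS}$ does not depend on the basis, we may use this one.

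For basis vectors $e_i\in V_1^\perp$ we have $E_1e_i=0$, so these terms vanish. For $e_i=f_j\in V_1$, self-adjointness of $E_2$ and $E_2^2=E_2$ give
\[
\ip{E_1f_j}{E_2f_j}=\ip{f_j}{E_2f_j}=\norm{E_2f_j}^2\geq 0 .
\]
Hence
\[
\ip{E_1}{E_2}_{\HS}=\sum_{j=1}^k\norm{E_2f_j}^2\geq\norm{E_2v_1}^2 .
\]
In particular the inner product is real and non-negative, so the inequality in the statement makes sense.

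It remains to show $\norm{E_2v_1}\geq\abs{\ip{v_1}{v_2}}$. Since $v_2\in V_2$, we have $\ip{v_1}{v_2}=\ip{v_1}{E_2v_2}=\ip{E_2v_1}{v_2}$. By Cauchy--Schwarz and $\norm{v_2}=1$, this gives $\abs{\ip{v_1}{v_2}}\leq\norm{E_2v_1}$. Squaring and combining with the previous display yields $\abs{\ip{v_1}{v_2}}^2\leq\ip{E_1}{E_2}_{\HS}$.

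There is no real obstacle here. The only point needing care is the choice of basis: the sum defining the Hilbert--Schmidt inner product must reduce to non-negative terms, one of which is $\norm{E_2v_1}^2$. Finite-dimensionality of $V_1$ and $V_2$ guarantees that $E_1$ and $E_2$ are finite rank, hence Hilbert--Schmidt, so the sum is finite.
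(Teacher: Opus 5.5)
Your proof is correct and is essentially the paper's argument: expand $\ip{E_1}{E_2}_{\HS}$ in an orthonormal basis adapted to one of the two subspaces, note that every term is a non-negative square, and keep a single term. The difference is cosmetic: the paper uses a basis of $V_2$ starting with a unit vector $w_2$ that maximizes $\norm{E_1 v}$ over unit $v\in V_2$, after an unnecessary case split on whether $V_1\cap V_2=\{0\}$, whereas you use a basis of $V_1$ starting with $v_1$ itself, which avoids both the maximization and the case split.
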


\begin{proof}
We have $\ip{E_1}{E_2}_{\HS}=\Tr(E_1E_2)\geq \dim(V_1\cap V_2)$. If $V_1\cap V_2\neq\{0\}$, the right-hand side is at least $1$, and the assertion is immediate.

Assume now that $V_1\cap V_2=\{0\}$. Choose a unit vector $w_2\in V_2$ maximizing $\norm{E_1v}$ among unit vectors $v\in V_2$.   
Then $\sup_{\norm{v_i}=1,\ v_i\in V_i}\abs{\ip{v_1}{v_2}}^2=\norm{E_1w_2}^2$. Choose an orthonormal basis $(e_i)$ of $\cH$ whose first vector is $w_2$ and whose first $\dim(V_2)$ vectors form an orthonormal basis of $V_2$. Then
\begin{align*}
        \abs{\ip{v_1}{v_2}}^2
        &\leq \norm{E_1w_2}^2
         = \norm{E_1e_1}^2                                      \\
        &\leq \sum_i \norm{E_1E_2e_i}^2
         = \Tr(E_2E_1E_2)
         = \Tr(E_1E_2)
         = \ip{E_1}{E_2}_{\HS}.
\end{align*}
This proves the claim.
\end{proof}

In the special case where $V_1$ and $V_2$ are both one-dimensional, the inequality of Lemma~\ref{lem:HS-projections} is an equality, see  \cite[Lemma~10.1]{EJK}. 

\begin{lemma}\label{lem:B-finite-bound}
Let $G$ be a   locally compact group,  $\pi \colon G\to\cU(\cH)$ be a unitary representation. Let $V_1,V_2$ be finite-dimensional subspaces of $\cH$, and let $E_i$ be the orthogonal projection onto $V_i$. Then, for all unit vectors $v_i\in V_i$ and all $g\in G$,
\[
        \abs{\ip{\pi(g)v_1}{v_2}}^2
        \leq
        \ip{\pi(g)E_1\pi(g)^{-1}}{E_2}_{\HS}.
\]
\end{lemma}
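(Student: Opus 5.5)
The plan is to reduce directly to Lemma~\ref{lem:HS-projections}, applied to the pair of subspaces $\pi(g)V_1$ and $V_2$. Fix $g\in G$ and unit vectors $v_i\in V_i$. Since $\pi(g)$ is unitary, the subspace $V_1'=\pi(g)V_1$ is again finite-dimensional, of the same dimension as $V_1$. The vector $v_1'=\pi(g)v_1$ is a unit vector in $V_1'$.

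The key observation is that the orthogonal projection onto $V_1'$ is exactly $E_1'=\pi(g)E_1\pi(g)^{-1}$. To check this, I would verify three things:
\begin{itemize}
\item $E_1'$ is self-adjoint, because $\pi(g)^{-1}=\pi(g)^*$.
\item $E_1'$ is idempotent, since $E_1'E_1'=\pi(g)E_1E_1\pi(g)^{-1}=E_1'$.
\item Its range is $\pi(g)V_1$, because $\pi(g)^{-1}$ is a bijection of $\cH$.
\end{itemize}
A self-adjoint idempotent is the orthogonal projection onto its range, so $E_1'$ is the projection onto $V_1'$.

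Applying Lemma~\ref{lem:HS-projections} to $V_1'$, $V_2$ and the unit vectors $v_1'$, $v_2$ gives
\[
\abs{\ip{\pi(g)v_1}{v_2}}^2=\abs{\ip{v_1'}{v_2}}^2\leq \ip{E_1'}{E_2}_{\HS}=\ip{\pi(g)E_1\pi(g)^{-1}}{E_2}_{\HS},
\]
which is the claimed inequality. Both operators in the last pairing are finite-rank, hence Hilbert--Schmidt, so the pairing makes sense.

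There is no genuine obstacle here. The only point needing care is the identification of the conjugated projection with the projection onto the translated subspace, which uses that $\pi(g)$ is unitary and not merely invertible.
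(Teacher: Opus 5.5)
Your proposal is correct and follows the paper's proof exactly: you apply Lemma~\ref{lem:HS-projections} to $\pi(g)V_1$ and $V_2$, after identifying $\pi(g)E_1\pi(g)^{-1}$ as the orthogonal projection onto $\pi(g)V_1$. Your explicit check of that identification (self-adjointness via $\pi(g)^{-1}=\pi(g)^*$, idempotence, and range) fills in the one point the paper states without comment.
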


\begin{proof}
Apply Lemma \ref{lem:HS-projections} to the finite-dimensional subspaces $\pi(g)V_1$ and $V_2$. Their orthogonal projections are $\pi(g)E_1\pi(g)^{-1}$ and $E_2$.
\end{proof}

The right-hand side in Lemma \ref{lem:B-finite-bound} is a matrix coefficient of the conjugation representation $\Ad\pi \colon G\to\cU(\HS(\cH))$, defined by $(\Ad\pi)(g)A=\pi(g)A\pi(g)^{-1}$. Under the standard Hilbert-space identification $\HS(\cH)\cong\cH\otimes\ol\cH$, this is the tensor representation $\pi\otimes\ol\pi$. If $V_i$ is  invariant under a subgroup $B \leq G$, then $E_i$ is $B$-fixed for $\Ad\pi$.

Given a compact subgroup $B$ of a locally compact group $G$ and a unitary representation $\pi \colon G\to\cU(\cH)$, we say that a vector $v \in \cH$ is \textbf{$B$-finite} if the linear span of its $B$-orbit is finite-dimensional. It is a consequence of Peter--Weyl's theorem that the set of $B$-finite vectors is dense. Since the finite-dimensional representations of profinite groups factor through finite quotients, it follows that if $G$ is totally disconnected, the set of vectors with an open stabilizer is dense. This is standard and well-known,  see \cite[Lemma~1.1]{SemalOlshanskii}.

\begin{lemma}
\label{lem:finite-depth-HM}
Let $G$ be a   totally disconnected locally compact group and $(B_i)_{i \in I}$ be a basis of identity neighborhoods consisting of compact open subgroups. 
Let 
\(\pi \colon G\to\mathcal U(\mathcal H)\) be a unitary representation such that for all $i \in I$  and every net
\(g_n\to\infty\) in \(G\), one has
\(P_i^\pi\pi(g_n)P_i^\pi\to 0\) weakly, where $P^{\pi}_i$ denotes the orthogonal projection onto the $B_i$-fixed vectors.
Then $\pi$ is mixing.
\end{lemma}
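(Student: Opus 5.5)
The plan is a standard density argument combined with a uniform bound. Fix $\xi,\eta\in\cH$ and a net $g_n\to\infty$; we must show $\ip{\pi(g_n)\xi}{\eta}\to 0$. First reduce to the case where $\xi$ and $\eta$ are each fixed by some compact open subgroup. By the paragraph preceding the statement (cf. \cite[Lemma~1.1]{SemalOlshanskii}), vectors with open stabilizer are dense in $\cH$. Given $\varepsilon>0$, choose $\xi',\eta'$ with open stabilizers such that $\norm{\xi-\xi'}<\varepsilon$ and $\norm{\eta-\eta'}<\varepsilon$. Since each $\pi(g)$ is unitary, we have, uniformly in $g$,
\[
\abs{\ip{\pi(g)\xi}{\eta}-\ip{\pi(g)\xi'}{\eta'}}\leq \varepsilon\norm{\eta}+\varepsilon\norm{\xi'}.
\]
Hence it suffices to prove $\ip{\pi(g_n)\xi'}{\eta'}\to0$ for vectors with open stabilizers.

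Next, use the fact that $(B_i)_{i\in I}$ is a neighbourhood basis of the identity. The stabilizers $\Stab(\xi')$ and $\Stab(\eta')$ are open, and so is their intersection, which therefore contains some $B_i$. Then $\xi'$ and $\eta'$ are both $B_i$-fixed, so $P_i^\pi\xi'=\xi'$ and $P_i^\pi\eta'=\eta'$. Consequently
\[
\ip{\pi(g_n)\xi'}{\eta'}=\ip{P_i^\pi\pi(g_n)P_i^\pi\xi'}{\eta'},
\]
and this tends to $0$ by the weak convergence assumed in the hypothesis. Letting $\varepsilon\to0$ gives $\ip{\pi(g_n)\xi}{\eta}\to 0$. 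Since this holds for every net $g_n\to\infty$, the coefficient lies in $C_0(G)$, and $\pi$ is mixing.

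The only steps that need care are the density of vectors with open stabilizers and the fact that one $B_i$ can serve for both vectors at once. The first is already quoted in the paper. The second follows because the family $(B_i)$ is a basis of identity neighbourhoods. I do not expect a real obstacle: the argument is an $\varepsilon/3$ approximation, and the essential input is that the error bound is uniform in $g$, which unitarity provides.
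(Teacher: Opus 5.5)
Your proposal is correct and follows essentially the same route as the paper. Both arguments use the density of vectors with open stabilizers, the unitarity bound that is uniform in $g$, a single $B_i$ inside the intersection of the two open stabilizers, and then the hypothesis on $P_i^\pi\pi(g_n)P_i^\pi$. The only difference is cosmetic: you conclude directly with nets, while the paper converts the net statement into a compact set outside of which the approximating coefficient is small.
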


\begin{proof}
Let \(\pi \colon G\to\mathcal U(\mathcal H)\) be a unitary
representation. Let \(\xi,\eta\in\mathcal H\), and let \(\varepsilon>0\). As recalled above, the set of  vectors with an open stabilizer is  dense in
\(\mathcal H\). We can thus choose vectors
\(\xi_0,\eta_0\in\mathcal H\) with an open stabilizer that are sufficiently close to $\xi, \eta$ so that
$
\|\xi-\xi_0\|\,\|\eta\|+\|\xi_0\|\,\|\eta-\eta_0\|
<
\varepsilon/2.
$
Then, for every \(g\in G\), we have
\[
\begin{aligned}
\left|
\langle \pi(g)\xi,\eta\rangle
-
\langle \pi(g)\xi_0,\eta_0\rangle
\right|
&\leq
\left|\langle \pi(g)(\xi-\xi_0),\eta\rangle\right|
+
\left|\langle \pi(g)\xi_0,\eta-\eta_0\rangle\right| \\
&\leq
\|\xi-\xi_0\|\,\|\eta\|
+
\|\xi_0\|\,\|\eta-\eta_0\| < \varepsilon/2.
\end{aligned}
\]

By hypothesis, there is $i \in I$ such that $B_i$ fixes both $\xi_0$ and $\eta_0$. 
Thus \(\xi_0,\eta_0\in\mathcal H^{B_i}\). If \(g_n\to\infty\) in \(G\),
then the hypothesis gives
$
P_i^\pi\pi(g_n)P_i^\pi\to 0
$
weakly. Therefore
$$
\langle \pi(g_n)\xi_0,\eta_0\rangle
=
\left\langle
P_i^\pi\pi(g_n)P_i^\pi \xi_0,\eta_0
\right\rangle
\to 0.
$$
Consequently there is a compact subset \(C\subset G\) such that
$
\left|\langle \pi(g)\xi_0,\eta_0\rangle\right|
<
\varepsilon/2
$
for all \(g\notin C\). For all such \(g\), the uniform estimate above gives
$
\left|\langle \pi(g)\xi,\eta\rangle\right|
<
\varepsilon.
$
This proves that every matrix coefficient of \(\pi\) vanishes at infinity. 
\end{proof}

\begin{proposition}\label{prop:Bfixed-tensor-implies-C0}
Let $G$ be a totally disconnected locally compact group, let $B<G$ be compact open, and let $\pi \colon G\to\cU(\cH)$ be a   unitary representation. Put $\sigma=\pi\otimes\ol\pi$. Assume that for every net $g_n\to\infty$ in $G$ and all $B$-fixed vectors $\Xi,\Upsilon\in\cH\otimes\ol\cH$, one has $\ip{\sigma(g_n)\Xi}{\Upsilon}\to0$. Then $\pi$ is mixing.
\end{proposition}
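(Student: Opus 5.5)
The plan is to verify the hypothesis of Lemma~\ref{lem:finite-depth-HM}, using a basis of identity neighbourhoods $(B_i)_{i\in I}$ of compact open subgroups. Since $B$ is compact open, we may take every $B_i$ contained in $B$. It then suffices to show the following: for every $i$, every net $g_n\to\infty$, and all $\xi,\eta\in\cH^{B_i}$, one has $\ip{\pi(g_n)\xi}{\eta}\to0$. This is exactly the statement $P_i^\pi\pi(g_n)P_i^\pi\to0$ weakly.

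Fix $i$ and $B_i$-fixed vectors $\xi,\eta$, which we may normalize to be unit vectors. The key observation is that $B_i$ has finite index in $B$ (it is open in the compact group $B$). Consequently the $B$-orbit of a $B_i$-fixed vector is a finite set, so the spans $V_1=\mathrm{span}(\pi(B)\xi)$ and $V_2=\mathrm{span}(\pi(B)\eta)$ are finite-dimensional and $B$-invariant. Let $E_1,E_2$ be the orthogonal projections onto them. By Lemma~\ref{lem:B-finite-bound},
\[
\abs{\ip{\pi(g_n)\xi}{\eta}}^2\leq \ip{(\Ad\pi)(g_n)E_1}{E_2}_{\HS}.
\]
Because $V_1,V_2$ are $B$-invariant, the operators $E_1,E_2$ commute with $\pi(B)$. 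Hence they are $B$-fixed vectors for $\Ad\pi$, and these are Hilbert--Schmidt since the $V_j$ are finite-dimensional. Under the identification $\HS(\cH)\cong\cH\otimes\ol\cH$, which intertwines $\Ad\pi$ with $\sigma=\pi\otimes\ol\pi$, the elements $E_1,E_2$ correspond to $B$-fixed vectors $\Xi,\Upsilon$ in $\cH\otimes\ol\cH$. The right-hand side above equals $\ip{\sigma(g_n)\Xi}{\Upsilon}$, which tends to $0$ by hypothesis. Therefore $\ip{\pi(g_n)\xi}{\eta}\to0$, and Lemma~\ref{lem:finite-depth-HM} yields that $\pi$ is mixing.

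The only point needing care is the identification of $\Ad\pi$ with $\sigma$. One has to check that the isomorphism $\HS(\cH)\to\cH\otimes\ol\cH$, $\xi\otimes\ol\eta\mapsto\ip{\cdot}{\eta}\xi$, is unitary and intertwines the two actions. On elementary tensors this is the computation $\pi(g)(\ip{\cdot}{\eta}\xi)\pi(g)^{-1}=\ip{\cdot}{\pi(g)\eta}\pi(g)\xi$; the general case follows by linearity and density. Beyond this, the argument is just bookkeeping: $B_i$ has finite index in $B$, so $B_i$-fixed vectors are $B$-finite.
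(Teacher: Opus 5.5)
Your proposal is correct and follows essentially the same route as the paper's proof. Both take compact open subgroups $B_i\leq B$, use that $\operatorname{span}(\pi(B)v)$ is finite-dimensional and $B$-invariant for any $B_i$-fixed $v$, apply Lemma~\ref{lem:B-finite-bound} to the $B$-fixed projections $E_1,E_2$ viewed in $\HS(\cH)\cong\cH\otimes\ol\cH$, and conclude with Lemma~\ref{lem:finite-depth-HM}; your explicit check that this identification intertwines $\Ad\pi$ with $\sigma$ is a detail the paper takes as standard.
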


\begin{proof}
Let $g_n\to\infty$ in $G$. We first show that matrix coefficients of $\pi$ defined by $B$-finite vectors vanish along $(g_n)$. 

We choose a basis  $(B_i)_{i \in I}$   of identity neighborhoods consisting of compact open subgroups with $B_i \leq B$ for all $i$. Fix $i \in I$ and let $v_1, v_2 \in \cH$ be unit vectors that are both fixed by $B_i$. For $j=1, 2$, put $V_j=\operatorname{span}(Bv_j)$. We have  $\dim(V_j) \leq [B:B_i]$, hence $V_j$ is finite-dimensional and $B$-invariant. 

Let $E_j$ denote the orthogonal projection onto $V_j$. The projector $E_j$ is $B$-fixed for the conjugation representation $\Ad\pi$, identified with $\sigma$. Lemma~\ref{lem:B-finite-bound} gives
\[
        \abs{\ip{\pi(g_n)v_1}{v_2}}^2
        \leq
        \ip{\pi(g_n)E_1\pi(g_n)^{-1}}{E_2}_{\HS}
        \longrightarrow0.
\]
Thus all matrix coefficients of $\pi$ defined by $B_i$-fixed vectors tend to zero along every net escaping compact subsets of $G$. Therefore, the hypotheses of Lemma~\ref{lem:finite-depth-HM} are fulfilled. It follows that $\pi$ is mixing.
\end{proof}

\subsection{Proof of Theorem~\ref{theoremA}}

We are now ready to finish the proof of  the main result.

\begin{proof}[Proof of Theorem~\ref{theoremA}]
If $W$ is spherical, then $X$ has finite diameter. Since $X$ is locally finite and has finite rank, the chamber set is finite. Hence $G$ is compact, and every continuous matrix coefficient belongs to $C_0(G)$. Thus the first alternative holds.

We may assume that $W$ is non-spherical. Suppose that the second alternative in Theorem~\ref{theoremA} does not hold. Thus, for every minimal non-spherical $J\subset S$, one has $\one_{P_J}\not\weakcontain (\pi|_{P_J})\otimes\ol{(\pi|_{P_J})}$. Put $\sigma=\pi\otimes\ol\pi$. By Proposition~\ref{prop:tensor-hecke-decay}, all   matrix coefficients of $\sigma$ defined by $B$-fixed vectors are $C_0$. Hence the hypotheses of Proposition~\ref{prop:Bfixed-tensor-implies-C0} are satisfied. It follows that  $\pi$ is mixing, as required. 
\end{proof}

\section{Property (T) in the compact hyperbolic case}
\label{sec:Kazhdan}

It is known by the work of Dymara--Januszkiewicz~\cite{DymJan} that property (T) holds for groups acting Weyl-transitively on locally finite buildings of $2$-spherical type, as soon as the thickness is large enough. Our next goal is to provide lower bounds on the thickness in the case where the type is compact hyperbolic, that are as good as possible. We rely on the well-known classification of Coxeter diagrams of compact hyperbolic type, and on Kassabov's criterion~\cite{Ka11}.

Let $X$ be a thick locally finite building of finite rank and type $(W,S)$. We retain the notation of Section~\ref{sec:hecke-preliminaries}. Thus for each $s \in S$, every $s$-panel of $X$ contains exactly $q_s +1$ chambers. We set 
$$\qmin = \min\{q_s \mid s \in S\}.$$
Assume that $(W, S)$ is of compact hyperbolic type and rank $r = |S|$. If in addition $(W, S)$ is crystallographic (i.e. all Coxeter numbers are in $\{2, 3, 4, 6, \infty\}$), then $r \in\{3, 4, 5\}$. Moreover, there are only two possible Coxeter diagrams of rank~$4$, and a single one of rank~$5$, see \cite[Ch.~V, \S4, Ex.~15]{Bour}. In all cases, the diagram is circular or linear. We may therefore number the elements of $S$ by the integers modulo $r$, and assume that $m_{i, j} = 2$ for all $i, j$ with $|i-j| \geq 2$. Setting $m_i = m_{i-1, i}$ for all $i \mod r$, we see that the Coxeter diagram of $(W, S)$ is completely determined by the $r$-tuple $(m_1, \dots, m_r)$, that we shall call the \textbf{type} of $(W, S)$.

\begin{proposition}\label{prop:Kazhdan}
Let $X$ be a thick locally finite building of finite rank whose Weyl group is of compact hyperbolic type   $\mathcal D = (m_1, \dots, m_r)$, and let $G<\Aut(X)$ be a closed type-preserving subgroup acting Weyl-transitively. Suppose that one of the following conditions holds:

\begin{enumerate}[label=(\arabic*)]
\item $\mathcal D \in \{(2, 4, 6), (3,3,4), (3, 3, 3, 4), (3, 3, 3, 3, 4)\}$ and $\qmin \geq 3$. 
\item $\mathcal D \in \{(2, 6, 6), (3, 4, 3,4)\}$ and $\qmin \geq 4$. 

\item $\mathcal D \in \{(3, 4, 4), (3,3,6)\}$ and $\qmin \geq 5$. 

\item $\mathcal D \in \{(3, 4, 6), (4,4,4)\}$ and $\qmin \geq 6$. 

\item $\mathcal D = (3,6,6)$ and $\qmin \geq 7$. 
\item $\mathcal D = (4, 4, 6)$   and $\qmin \geq 8$. 

\item $\mathcal D = (4,6,6)$ and $\qmin \geq 9$. 

\item $\mathcal D = (6,6,6)$ and $\qmin \geq 10$. 
\end{enumerate}
	
Then $G$ has Kazhdan's property (T). 
\end{proposition}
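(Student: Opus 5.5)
We will use Kassabov's criterion \cite{Ka11} for property (T) of a group generated by subgroups. Let $P_i = P_{S\setminus\{i\}}$ denote the maximal parabolic subgroups of $G$ (one for each $i \bmod r$). Since $(W,S)$ is of compact hyperbolic type, every proper subset of $S$ is spherical. Hence each $P_i$ is compact (a finite union of $B$-double cosets), and $G$ is generated by the $P_i$. Kassabov's criterion applies to a group generated by compact subgroups $H_1,\dots,H_r$. Its input is the $r\times r$ matrix of pairwise \emph{angles}: $\varepsilon_{ij}$ is the cosine of the angle between the subspaces of $H_i$- and $H_j$-fixed vectors (relative to their intersection), taken as a supremum over all unitary representations. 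If the symmetric matrix with $1$ on the diagonal and $-\varepsilon_{ij}$ off the diagonal is positive definite, then $G$ has (T). So the plan has three steps: compute the $\varepsilon_{ij}$, reduce positive definiteness to an inequality in $\qmin$, and check that inequality case by case.

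\emph{Step 1: the angles.} For $i\neq j$, the group generated by $P_i$ and $P_j$ is $G$. The relevant angle is the supremum over unitary representations of the cosine of the angle between $\cH^{P_i}$ and $\cH^{P_j}$ inside $(\cH^{P_i\cap P_j})$ modulo common fixed vectors. Now $P_i\cap P_j$ contains $P_{S\setminus\{i,j\}}$, and the pair $(P_i,P_j)$ acts on the rank-$2$ residue of type $\{i,j\}$, which is a generalized $m_{ij}$-gon. Vectors fixed by $P_{S\setminus\{i,j\}}$ correspond to functions on that residue. The angle is therefore the same as for the spherical building of type $\{i,j\}$, that is, for the Weyl-transitive action on a generalized $m$-gon with parameters $q_i,q_j$. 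This is the classical computation of Dymara--Januszkiewicz (see also Ershov--Jaikin), giving:
\begin{itemize}
\item $\varepsilon=0$ when $m=2$;
\item $\varepsilon=1/(\sqrt{q}+1)$ when $m=3$;
\item $\varepsilon=\sqrt{2}/(\sqrt{q}+1)$ when $m=4$;
\item $\varepsilon=\sqrt{3}/(\sqrt{q}+1)$ when $m=6$.
\end{itemize}
Here $q$ is the smallest relevant thickness parameter; in general $\varepsilon=\cos(\pi/m)$ scaled from the eigenvalue of the adjacency operator, and the bound is monotone decreasing in $q$. I would state this as a lemma, citing the eigenvalue computation on the incidence graph of a generalized polygon (Feit--Higman). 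Replacing $q$ by $\qmin$ only increases each $\varepsilon_{ij}$, so we may assume all $q_s=\qmin$.

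\emph{Step 2: reduction to a tridiagonal/circulant matrix.} Since $m_{ij}=2$ when $|i-j|\ge 2$, the Kassabov matrix is $I-A$, where $A$ is supported on the cycle (for circular diagrams) or the path (for linear ones), with entries $\varepsilon(m_i,\qmin)$ on the edge between $i-1$ and $i$. We need positive definiteness of this $3\times3$, $4\times4$ or $5\times5$ matrix. For rank $3$ the condition is explicit:
$$1-(\varepsilon_1^2+\varepsilon_2^2+\varepsilon_3^2)-2\varepsilon_1\varepsilon_2\varepsilon_3>0,$$
together with positivity of the $2\times 2$ minors, which is automatic since each $\varepsilon<1$ for $\qmin\ge2$. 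For ranks $4$ and $5$ we either compute the determinant and leading minors directly, or use a Gershgorin-type or Perron--Frobenius bound on the largest eigenvalue of the nonnegative matrix $A$, which must be less than $1$.

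\emph{Step 3: case check.} For each type in the list we plug in $\qmin$ at the stated threshold and verify the inequality numerically. For example, for $(6,6,6)$ we need $3\varepsilon^2+2\varepsilon^3<1$, i.e.\ $\varepsilon<1/2$ (since $3x^2+2x^3=1$ at $x=1/2$). That means $\sqrt{3}<(\sqrt{\qmin}+1)/2$, i.e.\ $\qmin>(2\sqrt3-1)^2\approx 6.07$, which comfortably holds for $\qmin\ge10$. Monotonicity in $\qmin$ then gives the result for all larger values.

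The main obstacle is Step 1: justifying precisely that the angle for the pair $(P_i,P_j)$ in $G$ reduces to the rank-$2$ residue, uniformly over representations, with the stated values. The case $(2,\ast,\ast)$ is a non-circular diagram, where a zero entry should be used. If the reduction is delicate, I would instead invoke the Dymara--Januszkiewicz/Ershov--Jaikin-Zapirain form of Kassabov's criterion for groups acting on buildings, which packages exactly this local-to-global statement. Some thresholds may need sharper (non-Gershgorin) determinant computations.
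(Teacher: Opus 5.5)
\paragraph{Step 1 is circular with your choice of subgroups.} Kassabov's $\varepsilon_{ij}$ is a supremum over unitary representations of $\langle H_i\cup H_j\rangle$ without invariant vectors. You take the maximal parabolics $P_{S\setminus\{i\}}$, and, as you note yourself, any two of them generate $G$. So nothing localizes to the $\{i,j\}$-residue: that residue is stabilized by $P_{\{i,j\}}$, not by these groups.

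Worse, the quantity you need is equivalent to the conclusion. Suppose $G$ fails (T). Take a representation without invariant vectors but with almost-invariant unit vectors. Such a vector is almost fixed by both compact groups, while $\cH^{P_{S\setminus\{i\}}}\cap\cH^{P_{S\setminus\{j\}}}=\cH^G=0$, so $\varepsilon_{ij}=1$. Conversely, $\varepsilon_{ij}<1$ already gives (T) by the $2\times2$ case of the criterion.

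The correct choice, which is what the paper does (and what the Dymara--Januszkiewicz / Ershov--Jaikin-Zapirain packaging does), is to take the rank-one parabolics $P_s=B\sqcup BsB$ as generators. Then $\langle P_s\cup P_t\rangle=P_{\{s,t\}}$ is compact because $W$ is $2$-spherical. It acts Weyl-transitively on a generalized $m_{st}$-gon, and the angle is genuinely local. Your heuristic description of Step 1 really describes this setting, not the one you set up.

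\paragraph{Your angle values are wrong.} By \cite[Proposition~7.1]{Garland} (see \cite[Theorem~2.10(ii)]{CKKW}), for $m_{s,t}\geq3$,
\[
\varepsilon_{s,t}=\Bigl(\frac{q_s+q_t+2\sqrt{q_sq_t}\cos(2\pi/m_{s,t})}{(q_s+1)(q_t+1)}\Bigr)^{1/2}.
\]
For $q_s=q_t=q$ this is $2\sqrt q\cos(\pi/m)/(q+1)$, the normalized second eigenvalue (Feit--Higman) of the $(q+1)$-regular incidence graph. That gives $\sqrt q/(q+1)$, $\sqrt{2q}/(q+1)$ and $\sqrt{3q}/(q+1)$ for $m=3,4,6$. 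Your values $c_m/(\sqrt q+1)$ are strictly smaller for every $q\geq2$. So the checks in Step 3 verify inequalities with underestimated entries and prove nothing.

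With the correct values, the thresholds in the statement are exactly where positive definiteness begins; there is no slack. For $(6,6,6)$ the condition $\varepsilon<1/2$ reads $2\sqrt{3q}<q+1$, i.e.\ $q>(\sqrt3+\sqrt2)^2\approx9.90$. At $q=10$ one has $\sqrt{30}/11\approx0.498$, barely below $1/2$, not ``comfortably''.

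Likewise, for $(3,3,3,4)$ and $(3,3,3,3,4)$ at $\qmin=3$ the row-sum (Gershgorin) bound fails, since $\sqrt3/4+\sqrt6/4\approx1.05$. A sharper positivity test is genuinely needed there. The paper uses \cite[Proposition~2.5(iii)]{CapKas}; a direct Perron-eigenvalue estimate for the weighted cycle also works.

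The rest of your structure matches the paper's proof: monotonicity in $\qmin$, the $3\times3$ determinant criterion, and the row-sum bound in ranks $4$ and $5$ for $\qmin\geq4$.
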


Before presenting the proof, we need to recall Kassabov's criterion~\cite{Ka11}. 
Given a group $H$ generated by two subgroups $A, B \leq H$ 
 and a unitary representation $\pi$ of $H$ without non-zero $H$-invariant vectors, we define
$$\varepsilon_H(A, B; \pi) = \sup\big\{\frac{|\langle u, v \rangle |}{\| u \| \| v \|} \mid u \in \mathcal H_\pi^A \setminus \{0\}, v \in \mathcal H_\pi^B \setminus \{0\}\big\}.
$$
 If $\mathcal H_\pi^A = \{0\}$ or $\mathcal H_\pi^B = \{0\}$, we put $\varepsilon_H(A, B; \pi) =0$. If $\pi$ is an arbitrary unitary representation, we define  $\varepsilon_H(A, B; \pi)$ as $\varepsilon_H(A, B;\pi_0)$, where $\pi_0 \leq \pi$ is the sub-representation of $\pi$ defined on the orthogonal complement of the subspace of $H$-invariant vectors. 
The supremum of $\varepsilon_H(A, B; \pi) $ taken over all unitary representations $\pi$ of $H$ with $\mathcal H_\pi^H = \{0\}$, is denoted by 
$\varepsilon_H(A, B).$

Let us now consider a group $G$ generated by compact subgroups $A_1, \dots, A_n$, so that $A_{i, j} = \langle A_i \cup A_j \rangle$ is compact for all $i, j$. Set $\varepsilon_{i, j} = \varepsilon_{A_{i, j}}(A_i, A_j)$. Let $M$ be the symmetric $n\times n$-matrix defined by $M_{i, i} = 1$ and $M_{i, j}= -\varepsilon_{i, j}$  for all $i \neq j$. Kassabov's criterion \cite[\S2]{Ka11} ensures that if $M$ is positive definite, then $G$ has property (T).

\begin{proof}[Proof of Proposition~\ref{prop:Kazhdan}]
The group $G$ is generated by the rank~one parabolic subgroups $P_s = B \sqcup BsB$, that are compact. Moreover, the rank~two parabolics $P_{s, t} = \langle P_s \cup P_t\rangle$ are also compact since the Weyl group $W$ is $2$-spherical. Let $m_{s, t}$ be the order of $st$. We set
$$\varepsilon_{s, t} = \left\{
\begin{array}{ll}
0 & \text{if } m_{s, t} = 2,\\
\displaystyle \left( \frac{q_s + q_t + 2\sqrt{q_s q_t}\cos(2\pi/m_{s, t})}{(q_s+1)(q_t+1)}\right)^{\frac 1 2}
&\text{if } m_{s, t} \geq 3.
\end{array}
\right.
$$
By \cite[Proposition~7.1]{Garland} and \cite[Theorem~2.10(ii)]{CKKW}, we have $\varepsilon_{P_{s, t}}(P_s, P_t) = \varepsilon_{s, t}$. 

By hypothesis, the Weyl group $W$ is of type $(m_1, \dots, m_r)$. Let $S = \{s_1, s_2, \dots, s_{r}\}$ be the ordering of $S$ so that $m_{s_{i-1}, s_i} = m_i$, and set $\varepsilon_i = \varepsilon_{s_{i-1}, s_i}$. In view of Kassabov's criterion, in order to prove that $G$ has (T), it suffices to ensure that the $r\times r$-matrix 
$$M =  \left(
\begin{array}{cccccc}
1 & -\varepsilon_1 & 0 & \dots & 0 & -\varepsilon_r \\ 
-\varepsilon_1 & 1 & -\varepsilon_2 & \dots & 0 & 0 \\
0 & -\varepsilon_2 & 1 & \dots & 0 & 0 \\
\vdots & \vdots & \vdots &  \ddots & \vdots & \vdots  \\ 
0 & 0 & 0 & \dots &  1 & -\varepsilon_{r-1} \\
-\varepsilon_r & 0 & 0 & \dots & -\varepsilon_{r-1} & 1 
\end{array}
\right).
$$
is positive definite. 

Observe that $\varepsilon_{s, t}$, viewed as a function of $q_s \geq 2$ (resp. $q_t \geq 2$),  is  strictly decreasing. In particular the quantity $\varepsilon_{s, t}$ does not decrease if one replaces $q_s$ and $q_t$ by $\qmin$. 

Assume first that  $r=3$. In that case, the matrix $M$   is positive definite if and only if $\varepsilon_1^2+\varepsilon_2^2+\varepsilon_3^2 + 2\varepsilon_1\varepsilon_2\varepsilon_3 < 1$, see \cite[\S4.2]{Ka11} (see also \cite[Theorem~5.9]{EJ}). A direct verification shows that the latter inequality indeed holds under the hypotheses of the proposition. 

Assume now that $r \geq 4$. In particular  we have $m_i \geq 3$ for all $i$, so that $\varepsilon_i >0$ for all $i$. This allows us to invoke  \cite[Proposition~2.5]{CapKas}.  More precisely, \cite[Proposition~2.5(i)]{CapKas} ensures that $M$ is positive definite provided $\max_i \{\varepsilon_{i-1}+ \varepsilon_i\} < 1$. A direct verification shows that the latter inequality holds if $\qmin \geq 4$. In particular the proposition is proved if (2) holds. It remains to check that $M$ is positive definite if $\mathcal D \in \{   (3, 3, 3, 4), (3, 3, 3, 3, 4)\}$ and $\qmin = 3$. This follows from \cite[Proposition 2.5(iii)]{CapKas}.
\end{proof}
	
\begin{remark} \label{rem:lowthickness}

One can compute numerically that, in the case where $q_s = \qmin$ for all $s$, the sufficient conditions for the matrix $M$ in the proof of Proposition~\ref{prop:Kazhdan} to be positive definite are also necessary. It would be interesting to determine whether the group $G$ has Kazhdan's property (T) in those cases. There is one case where it is already known to fail: indeed, let \(X\) be the building associated with an
RGD-system of type \((4,4,4)\) over \(\mathbf F_2\).  Every panel of \(X\)
then contains three chambers, but \(\Aut(X)\) does not have
property~\textup{(T)}; see \cite[Lemma~7.4.6]{BischofThesis}.  The proof uses
the positive unipotent subgroup \(U_+\), which is a non-finitely generated
lattice in \(\Aut(X)\). See also Section~\ref{sec:outlook} for further discussion.
\end{remark}

\begin{corollary}\label{cor:minimal-non-sph:Kazhdan}
Let $X$ be a thick locally finite building of finite rank and of minimal non-spherical type $(W, S)$. Let $G<\Aut(X)$ be a closed type-preserving subgroup acting Weyl-transitively. Suppose that one of the following conditions holds. 
\begin{enumerate}[label=(\arabic*)]
\item $(W, S)$ is simply laced. 
\item $(W, S)$ is $3$-spherical, and every panel of $X$  contains at least $5$ chambers. 
\item $(W, S)$ is $2$-spherical crystallographic, and every panel of $X$ contains  at least $11$ chambers.
\end{enumerate}

Then $G$ has Kazhdan's property (T). 
\end{corollary}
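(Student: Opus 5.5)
Minimal non-spherical Coxeter systems are either affine or compact hyperbolic, so the proof splits along this dichotomy.

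\emph{Affine case.} Here $X$ is a thick locally finite Euclidean building. If $|S|\geq 3$, property (T) for closed Weyl-transitive (equivalently strongly transitive, by \cite[Theorem~5.16]{Kramer}) automorphism groups is known for every thickness; see \cite{Oppenheim}, or Garland's method. In rank $|S|=2$ the system is the infinite dihedral group, which is not $2$-spherical, and none of conditions (1)--(3) is satisfied. Indeed, the simply laced condition by definition only allows orders $2$ and $3$. So nothing needs to be proved there.

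\emph{Compact hyperbolic case.} First I translate each hypothesis into the list of types in Proposition~\ref{prop:Kazhdan}. Conditions (1) and (3) imply $2$-spherical, and (2) is stronger still. If the system is crystallographic, the type $\mathcal D=(m_1,\dots,m_r)$ lies in the finite classification list of \cite[Ch.~V, \S4, Ex.~15]{Bour}. Simply laced compact hyperbolic diagrams are circular with all $m_i=3$ except $m_{i,j}=2$ for non-adjacent nodes. Since a cycle $(3,3,3)$ is affine $\tilde A_2$, I need to check which simply laced types are actually compact hyperbolic. I expect none occur besides possibly ones of the form $(3,3,3,\dots)$, which are affine. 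In that event case (1) falls entirely into the affine case, and I will verify this against the list.

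For (2), $3$-spherical means every rank-$3$ subdiagram is spherical. This forces $r\in\{4,5\}$, giving the types $(3,3,3,4)$, $(3,4,3,4)$ and $(3,3,3,3,4)$, or affine ones. Panels of size $\geq 5$ mean $\qmin\geq 4$, so items (1) and (2) of Proposition~\ref{prop:Kazhdan} apply. For (3), every panel of size $\geq 11$ gives $\qmin\geq 10$, and every compact hyperbolic crystallographic type appears in some item of Proposition~\ref{prop:Kazhdan} with threshold at most $10$, so the conclusion follows.

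\emph{Main obstacle.} The delicate point is non-crystallographic compact hyperbolic types, e.g.\ those with $m_i=5$ or $m_i\geq 7$. These may occur under (1) (impossible, since simply laced) or (2), where $3$-spherical non-crystallographic types of rank $4$ or $5$ involve $H_3$ or $H_4$ subdiagrams. For these I will redo the Kassabov computation directly. I bound $\varepsilon_{s,t}\leq(2/(q+1))^{1/2}\cdot$(value at $\cos=1$), and more precisely use the formula from the proof of Proposition~\ref{prop:Kazhdan} with $q=\qmin\geq 4$. This gives $\varepsilon_{s,t}\leq (2\sqrt q+2q)^{1/2}/(q+1)$, which is below $1/2$ for $q\geq 4$ (it equals $\sqrt{12}/5<0.7$ at $\cos=1$, so I will instead use $\cos(2\pi/m)\leq \cos(2\pi/5)$ for the rank-$3$ subdiagrams that really occur, and check numerically that $\varepsilon_{i-1}+\varepsilon_i<1$). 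Then \cite[Proposition~2.5(i)]{CapKas} yields positive definiteness. If $m=\infty$ occurs, the type is not $2$-spherical, so this case is excluded.
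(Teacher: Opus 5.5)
\textbf{There is a genuine gap, in condition (2), exactly at the step you call the main obstacle.}

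Your affine case and your handling of conditions (1) and (3) are fine and agree with the paper. For (1) the hedge is unnecessary: a triangle group with all labels $\leq 3$ is never hyperbolic, and every compact hyperbolic diagram of rank $4$ or $5$ carries a label $\geq 4$.

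The $3$-spherical compact hyperbolic types are the Lann\'er diagrams of rank $4$ and $5$. Besides $(3,3,3,4)$, $(3,4,3,4)$ and $(3,3,3,3,4)$, these include several diagrams with a label $5$: the cyclic types $(3,3,3,5)$, $(3,4,3,5)$, $(3,5,3,5)$, and linear or branched diagrams containing $H_3$ or $H_4$.

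Your plan to treat these by a direct Kassabov computation with $\qmin\geq 4$ does not work.
\begin{itemize}
\item With $q_s=q_t=4$ the formula gives $\varepsilon=2/5$ for $m=3$ and $\varepsilon=(1+\sqrt5)/5\approx 0.647$ for $m=5$. Hence $\varepsilon_{i-1}+\varepsilon_i=(3+\sqrt5)/5>1$, and \cite[Proposition~2.5(i)]{CapKas} is not available.
\item For type $(3,5,3,5)$ it is worse. Every vertex carries exactly one edge labelled $3$ and one labelled $5$, so $(1,1,1,1)$ is an eigenvector of $M$ with eigenvalue $1-(3+\sqrt5)/5<0$. Thus $M$ is not positive definite, and Kassabov's criterion as used in the paper cannot be applied at all.
\item Your upper bound is also miscomputed. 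At $\cos(2\pi/m)=1$ and $q_s=q_t=q$ the formula gives $2\sqrt q/(q+1)$, which is $4/5$ at $q=4$.
\end{itemize}

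\textbf{The missing idea is that these non-crystallographic types cannot occur at all.} You note that they ``involve $H_3$ or $H_4$ subdiagrams''; that is precisely where the key fact enters. A minimal non-spherical diagram is connected. So in a connected $3$-spherical diagram of rank $\geq 4$, an edge labelled $5$ lies in an irreducible spherical rank-$3$ subdiagram, necessarily of type $H_3$. Then $X$ would have a thick spherical residue of type $H_3$.

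The paper rules this out as follows. Such residues carry strongly transitive actions \cite[Proposition~6.15]{AbramenkoBrown}, and by Tits' classification \cite[Theorem~11.4]{Tits74} there is no thick building of type $H_3$. Alternatively, the Feit--Higman theorem excludes label $5$ directly, since rank-$2$ residues of a thick locally finite building are finite thick generalized polygons.

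Hence $(W,S)$ is crystallographic, and only $(3,3,3,4)$, $(3,4,3,4)$ and $(3,3,3,3,4)$ remain. Items (1)--(2) of Proposition~\ref{prop:Kazhdan}, with $\qmin\geq 4$, then finish the proof. With this observation in place of your computation, the rest of your argument goes through.
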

\begin{proof}
By hypothesis $(W, S)$ is of minimal non-spherical type, hence it is irreducible. 
There are two cases. If  $(W, S)$ is of affine type, then property (T) holds by \cite{Oppenheim}. Otherwise $(W, S)$ is of compact hyperbolic type. In that case, it cannot be simply laced.  Moreover, in the $3$-spherical case, every irreducible residue of rank~$3$ is endowed with a strongly transitive action (see \cite[Proposition~6.15]{AbramenkoBrown}). Hence, it follows from Tits' classification \cite[Theorem~11.4]{Tits74} that $(W, S)$ is crystallographic. Therefore, the type of $(W, S)$ is one of those appearing in  Proposition~\ref{prop:Kazhdan}. We infer that $G$ has property (T) if $\qmin \geq 10$ in the rank~$3$ case, and if $\qmin \geq 4$ if the rank is~$4$ or~$5$. 
\end{proof}

We are now ready to complete the proofs of Corollaries~\ref{corollaryB}--\ref{corollary_KM}.

\begin{proof}[Proof of Corollary~\ref{corollaryB}]
Suppose that $\pi$ is not mixing. By Theorem~\ref{theoremA}, there is a minimal non-spherical subset $J\subset S$ such that $\pi|_{P_J}$ is amenable, i.e. $\one_{P_J}\weakcontain(\pi|_{P_J})\otimes\ol{(\pi|_{P_J})}$. Since $P_J$ has property (T) by Corollary~\ref{cor:minimal-non-sph:Kazhdan}, this weak containment is actual containment. Thus $(\pi|_{P_J})\otimes\ol{(\pi|_{P_J})}$ has a non-zero invariant vector and hence $\pi|_{P_J}$ has a finite-dimensional subrepresentation, see \cite[Proposition~A.1.12]{BHV}.
\end{proof}

The following result appears in  \cite[Corollary~3.1]{CapMon} for strongly transitive actions. Strong transitivity is only used to invoke Tits' transitivity lemma, which is actually valid under the hypothesis of Weyl transitivity, see \cite[\S6.2.7]{AbramenkoBrown}. Hence, the arguments from \cite[Corollary~3.1]{CapMon} yield the following. 

\begin{proposition}\label{prop:simple-by-compact}
Let $X$ be a thick locally finite building of irreducible non-spherical type, and $G \leq \Aut(X)$ be a closed subgroup acting Weyl-transitively. Then the intersection $G^+$ of all non-trivial closed normal subgroups of $G$ is topologically simple, and the quotient $G/G^+$ is compact. 
\end{proposition}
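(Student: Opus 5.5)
Following the approach of \cite[Corollary~3.1]{CapMon}, the proof will use Tits' transitivity lemma, in the Weyl-transitive form of \cite[\S6.2.7]{AbramenkoBrown}: if $N$ is a normal subgroup of $G$ acting non-trivially on $X$ and $X$ is irreducible and thick, then $N$ is chamber-transitive, i.e.\ $G = NB$. The plan has three steps.

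\textbf{Step 1: non-trivial closed normal subgroups are cocompact.} Let $N$ be a non-trivial closed normal subgroup of $G$. Since $G$ acts faithfully on $X$, the subgroup $N$ acts non-trivially. The transitivity lemma then gives $G = NB$, where $B = \Stab_G(c_0)$ is compact open. Hence $G/N$ is compact. To see that $N$ acts non-trivially, I will use that the irreducible non-spherical building has no proper non-empty $G$-invariant convex subsets; this shows that $N$ cannot fix anything.

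\textbf{Step 2: the intersection $G^+$ is non-trivial.} Consider the family of non-trivial closed normal subgroups. Each member $N$ satisfies $N \cap B \neq N$ and $NB = G$. I will use local finiteness as follows. The kernel of the action of $N \cap B$ on the ball of radius $n$ around $c_0$ has index bounded uniformly in $N$, because $N$ is transitive on chambers. Consider the compact sets $N \cap \{g : g c_0 \text{ adjacent to } c_0\}$. These sets are non-empty for every $N$, since $N$ is chamber-transitive and $X$ is thick. For a decreasing intersection they form a filtered family of non-empty compact sets, so by the finite intersection property the intersection $G^+$ contains an element moving $c_0$ to an adjacent chamber. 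In particular $G^+ \neq 1$.

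To apply this to arbitrary intersections rather than only decreasing ones, I will first note that the intersection of two members is again non-trivial. If $N_1 \cap N_2 = 1$, then $N_1$ and $N_2$ commute. In that case $N_2$ normalizes $N_1$ and centralizes it, while $N_1$ is chamber-transitive and $N_1 \cap B$ has finite index considerations that contradict the fact that the centralizer of a chamber-transitive group in $\Aut(X)$ is trivial for thick non-spherical $X$. I expect this centralizer argument to be the main obstacle. I would handle it by showing that an element commuting with a chamber-transitive group must act as a type-preserving automorphism with constant Weyl displacement, which forces it to be trivial when $W$ is infinite and irreducible.

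\textbf{Step 3: conclusion.} Once $G^+$ is non-trivial, Step 1 shows it is cocompact, so $G/G^+$ is compact. For topological simplicity, let $M$ be a non-trivial closed normal subgroup of $G^+$. Then $G^+$ acts on $X$ Weyl-transitively: applying the transitivity lemma to $G^+$ gives $G^+ B = G$, which shows this. The centralizer argument then shows that $M$ acts non-trivially. Applying the transitivity lemma to $M$ inside $G^+$, and arguing with the finitely many $G$-conjugates of $M$ (there are finitely many since $G/G^+ B$-cosets are controlled by the compact group $B$), I will show that the normal closure of $M$ in $G$ is contained in $M$ up to a closed normal subgroup of $G$. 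Minimality of $G^+$ then forces $M = G^+$.
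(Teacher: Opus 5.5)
Your Steps~1 and~2 are fine. They reproduce the first half of the argument of \cite[Corollary~3.1]{CapMon}, which the paper invokes. The transitivity lemma gives $NB=G$ for every non-trivial closed normal subgroup $N$. Once finite intersections of such subgroups are known to be non-trivial, the finite intersection property for the compact sets $N\cap K$ yields $G^+\neq 1$, hence $G^+B=G$. The detours through invariant convex subsets and uniformly bounded indices are unnecessary, since faithfulness already gives non-triviality.

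Your centralizer plan also works and is elementary. Suppose $z\in\Aut(X)$ is type-preserving and commutes with a chamber-transitive group; then $\delta(c,zc)=w_0$ for all chambers $c$. Let $P$ be the $s$-panel of $c$, and compare $c$ with a chamber $e\neq c$ of $P$: take $e=z^{-1}\operatorname{proj}_{zP}(c)$ if $\ell(w_0s)<\ell(w_0)$, and any $e\neq c$ otherwise. This gives $w_0\in\{w_0s,sw_0s\}$, so $w_0$ commutes with every $s\in S$. It is therefore trivial, because an infinite irreducible Coxeter group has trivial centre.

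The genuine gap is Step~3. From $G^+B=G$ you only get that $G^+$ is chamber-transitive. Weyl-transitivity would mean that $G^+\cap B$ is transitive on every sphere $\{d:\delta(c_0,d)=w\}$, and nothing you have forces this. Already on a single panel, a transitive normal subgroup of a $2$-transitive group need not be $2$-transitive (e.g.\ the translations in $\mathrm{AGL}_1(q)$). Hence the transitivity lemma cannot be applied to a closed normal subgroup $M$ of $G^+$, which is not normal in $G$.

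The rest of Step~3 does not repair this:
\begin{itemize}
\item The $G$-conjugates of $M$ are the $bMb^{-1}$ with $b\in B$, and there is no reason for $N_B(M)$ to be open. So they need not be finitely many.
\item The closed normal subgroup of $G$ generated by $M$ is always $G^+$, so it carries no information.
\end{itemize}
What minimality of $G^+$ actually requires is that the core $\bigcap_{g\in G}gMg^{-1}$, a closed normal subgroup of $G$ contained in $M$, be non-trivial.

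Your Step~2 machinery would deliver this if you knew that every non-trivial closed normal subgroup of $G^+$ is chamber-transitive. Finite intersections of conjugates of $M$ would then be non-trivial by the centralizer lemma, hence chamber-transitive, and compactness would apply. But you give no argument for that statement or for any substitute. The paper does not reprove any of this. It takes the whole statement, including topological simplicity, from the arguments of \cite[Corollary~3.1]{CapMon}; its only new input is that the transitivity lemma holds for Weyl-transitive actions.
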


We may now deduce Corollary~\ref{corollaryBB} from Corollary~\ref{corollaryB}. 

\begin{proof}[Proof of Corollary~\ref{corollaryBB}]
Since $(W, S)$ is of minimal non-spherical type, it is irreducible. 
Let  $\pi $ be a unitary representation of $G$. Suppose first that $\pi$ contains a finite-dimensional subrepresentation $\rho$. Since $G$ is totally disconnected and $\rho$ takes its values in a finite-dimensional Lie group, the kernel of $\rho$ is open. Thus $\rho$ is trivial on the subgroup $G^+$ from Proposition~\ref{prop:simple-by-compact}. It follows that $\rho$ factors through a compact quotient of $G$. By hypothesis, this implies that $\rho$ is the trivial representation. Hence $\pi$ has non-zero  invariant vectors. Now the Howe--Moore property for $G$ follows directly from the dichotomy asserted by Corollary~\ref{corollaryB}. 
\end{proof}

\begin{remark}
  \label{rem:Kunze-Stein}
In the situation of Corollary~\ref{corollaryBB}, the estimates above also give a weak form of the Kunze--Stein phenomenon for $B$-bi-invariant functions. We record this only as a byproduct. 

Let \(m_B\) denote the normalized Haar probability measure on \(B\). It coincides with the restriction to $B$ of the Haar measure $m$ on $G$. The operator
$
\lambda(m_B*\delta_{\dot w}*m_B)
$
on \(L^2(G)\) is the regular-representation instance of the normalized
chamber Hecke operator \(T_w\). Hence the proof of Proposition~\ref{prop:tensor-hecke-decay}
gives constants \(N\geq 1\) and \(0<\theta<1\) such that
\[
\bigl\|\lambda(m_B*\delta_{\dot w}*m_B)\bigr\|_{B(L^2(G))}
\leq
\theta^{\lfloor \ell(w)/N\rfloor}
\]
for all \(w\in W\).
It follows that convolution by \(B\)-bi-invariant functions satisfies a
Kunze--Stein estimate for \(p\) sufficiently close to \(1\). More precisely,
there exists \(p_0>1\) such that, for every \(1\leq p<p_0\), there is a
constant \(C_p\) with
$
\|f*\xi\|_2\leq C_p\|f\|_p\|\xi\|_2
$
for every compactly supported bi-\(B\)-invariant function \(f\) on \(G\) and
every \(\xi\in L^2(G)\).

Indeed, write
$
f=\sum_{w\in W} a_w\,1_{B\dot wB}
$
with finite support. Since
$
1_{B\dot wB}=q_w T_w = q_w \,m_B*\delta_{\dot w}*m_B,
$
we get
\[
\|\lambda(f)\|
\leq
\sum_{w\in W}|a_w|\, q_w\,
\theta^{\lfloor \ell(w)/N\rfloor}.
\]
On the other hand,
$
\|f\|_p^p=\sum_{w\in W}|a_w|^p q_w.
$
Thus H\"older's inequality gives
\[
\|\lambda(f)\|
\leq
\|f\|_p
\left(
\sum_{w\in W}
\theta^{p'\lfloor \ell(w)/N\rfloor} q_w
\right)^{1/p'},
\]
where \(p'\) is conjugate to \(p\). The last series is finite for \(p\)
sufficiently close to \(1\), because \(W\) has at most exponential growth
and \(q_w\) grows at most exponentially in \(\ell(w)\).
\end{remark}

\begin{proof}[Proof of Corollary~\ref{corollary_HM_KM}]
It follows from \cite[Prop.~11 and Cor.~16]{CapRem} that the group $G$ is topologically simple (it is even abstractly simple, see \cite{Mar}). In particular, its only compact quotient is trivial. Moreover $G$ acts strongly transitively (hence Weyl-transitively) on its building. Hence the conclusion follows from Corollary~\ref{corollaryBB}. 
\end{proof}

\section{Proof of Corollary~\ref{corollary_KM}}

An \textbf{invariant random subgroup} (or \textbf{IRS}) of a locally compact group \(G\)
is a conjugation-invariant Borel probability measure on the space of closed
subgroups of \(G\) (equipped with the Chabauty topology). Equivalently, it
is the distribution of stabilizers for a probability-measure-preserving
action of \(G\). We denote by \(\IRS(G)\) the space of all invariant random
subgroups of \(G\), and by \(\IRSerg(G)\) the subspace of ergodic ones.

Recall also that a character of a countable group \(\Gamma\) is a
conjugation-invariant positive definite function normalized by
\(\chi(e)=1\); it is extremal if it is an extreme point of the convex set
of characters. If \(\theta\in\IRS(\Gamma)\), then the associated
fixed-point character is given by
$
\chi_\theta(g)=\theta\bigl(\{H\leq \Gamma:g\in H\}\bigr).
$
If \(\theta\) is realized as the stabilizer distribution of a probability-measure-preserving 
action \(\Gamma\curvearrowright (Y,\mu)\), then
\(\chi_\theta(g)=\mu(\{y\in Y:gy=y\})\).

\medskip

We shall need the following auxiliary fact. 

\begin{lemma}\label{lem:convergence-radius}
Let $(W, S)$ be a Coxeter system of crystallographic compact hyperbolic type. Then the convergence radius of the growth series of $W$ is strictly larger than $1/2$. 
\end{lemma}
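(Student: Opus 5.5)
We need the growth series $W(t)=\sum_{w}t^{\ell(w)}$ to have radius of convergence $>1/2$. The plan is to reduce to the finite list of crystallographic compact hyperbolic diagrams and bound the growth rate for each.

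By the classification recalled before Proposition~\ref{prop:Kazhdan}, the type $\mathcal D=(m_1,\dots,m_r)$ belongs to a finite list:
\begin{itemize}
\item rank~$3$ triangle groups $(p,q,r)$ with $1/p+1/q+1/r<1$ and entries in $\{2,3,4,6\}$;
\item the two rank~$4$ diagrams $(3,3,3,4)$ and $(3,4,3,4)$;
\item the single rank~$5$ diagram $(3,3,3,3,4)$.
\end{itemize}
For each we compute $W(t)$ as a rational function via Steinberg's formula
\[
\sum_{I\subseteq S,\ W_I \text{ finite}} \frac{(-1)^{|I|}}{W_I(t^{-1})}=0 \quad\text{(when $W$ is infinite)},
\]
equivalently
\[
\frac{1}{W(t)}=\sum_{I\subseteq S,\ W_I\text{ finite}} \frac{(-1)^{|I|}}{W_I(t^{-1})}
\]
in the appropriate form. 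This expresses $1/W(t)$ through the Poincaré polynomials $W_I(t)=\prod_i [d_i]_t$ of the spherical parabolics. Because every proper subset is spherical in compact hyperbolic type, the sum runs over all proper subsets $I\subsetneq S$. The radius of convergence is then the smallest positive root of the denominator, after clearing the numerator, and it suffices to check that this denominator has no zero in $(0,1/2]$. Since the coefficients of $W(t)$ are nonnegative, Pringsheim's theorem gives that the radius equals the first positive real singularity, so only real roots matter.

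A cleaner route avoids the case check by using monotonicity. If $W'$ is obtained from $W$ by increasing some $m_{s,t}$, then there is a length-nondecreasing bijection type comparison, and the number of elements of length $n$ in $W$ is at most that in $W'$. So the growth rate is monotone in the labels. The worst case in each rank is then the largest labels:
\begin{itemize}
\item $(6,6,6)$ in rank~$3$, which dominates all triangle cases with entries in $\{2,3,4,6\}$;
\item one of $(3,4,3,4)$ or $(3,3,3,4)$ in rank~$4$;
\item $(3,3,3,3,4)$ in rank~$5$.
\end{itemize}
Even cruder, in rank~$3$ the group $(\infty,\infty,\infty)$, the free product of three $\mathbb Z/2$, has growth series $(1+t)/(1-2t)$ with radius exactly $1/2$. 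Strict inequality for finite labels needs an argument: $(6,6,6)$ is a proper quotient by relations of length $12$. I would simply compute $1/W(t)$ for $(6,6,6)$ explicitly:
\[
\frac{1}{W(t)}=1-\frac{3t}{1+t}+\frac{3t^{6}}{[2]_t[6]_t}\cdots,
\]
written properly with $t\mapsto t^{-1}$ conventions, and verify numerically that the denominator is positive on $[0,1/2]$. The expected root is near $0.5$ plus a small margin, so this needs care.

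The main obstacle is the rank~$3$ extreme case $(6,6,6)$, where the radius is very close to $1/2$. Here the inequality must be certified rigorously, for instance by evaluating the explicit polynomial at $t=1/2$ and showing a sign change occurs only beyond $1/2$. The rank~$4$ and $5$ cases should have radius comfortably larger and be routine evaluations of the same formula.
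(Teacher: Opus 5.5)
Your plan is correct, and for ranks $4$ and $5$ it is the paper's argument. The growth series is rational, its radius of convergence is its smallest positive pole (Pringsheim), and one checks numerically that this pole exceeds $1/2$ for $(3,3,3,4)$, $(3,4,3,4)$ and $(3,3,3,3,4)$. The paper reads the series off the tables of Chapovalov--Leites--Stekolshchik rather than deriving them from Steinberg's formula. The margins are indeed comfortable: my own rough evaluation of Steinberg's formula gives radii of about $0.64$, $0.56$ and $0.62$.

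In rank $3$ the paper simply cites Bischof's Theorem~A, whereas you compute. Your route is more self-contained, and it is easier than you anticipate. For every infinite rank-$3$ Coxeter system with finite labels, Steinberg's formula gives
\[
\frac{1}{W(t)}=\frac{1-2t}{1+t}+\sum_{i<j}\frac{t^{m_{ij}}}{(1+t)(1+t+\dots+t^{m_{ij}-1})},
\]
which is strictly positive on $[0,1/2]$. The radius $R$ is a pole of $W$, and this rational function would have to vanish there. Hence $R>1/2$ for all eleven triangle diagrams at once. No delicate certification is needed for $(6,6,6)$, whose radius is about $0.514$.

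Two points in your write-up should be corrected, though neither is fatal.

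\emph{Monotonicity.} The shortcut is not justified as stated. Raising a label $m$ to $m'$ yields a surjection $W'\to W$ (hence a coefficientwise comparison of growth series) only when $m$ divides $m'$. This fails for $4\to 6$. So the claim that $(6,6,6)$ dominates every triangle diagram rests on a genuine comparison theorem for Coxeter growth, due to Terragni, which you would have to cite. Given the uniform rank-$3$ argument above and the finiteness of the list, just drop it.

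\emph{Steinberg's formula.} Your first displayed identity is false. The alternating sum over spherical $I$ of $(-1)^{|I|}/W_I(t^{-1})$ equals $1/W(t)$, not $0$. What vanishes for infinite $W$ is $\sum_{I\subseteq S}(-1)^{|I|}/W_I(t)$, with $I=S$ included. Your second display is the correct formula and is the one you actually use.
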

\begin{proof}
As mentioned above, the rank of $(W, S)$ is $3, 4$ or $5$. In the rank three case, the result follows from~\cite[Theorem~A]{Bischof}.

In general, it is known that the growth series of $W$ is a rational fraction,  so that its convergence radius is the smallest root of the denominator,  see \cite[\S17.1]{Davis}. For Coxeter groups of compact hyperbolic type and rank~$4, 5$, the growth series are explicitly known, see the tables in \cite{CLS}.  For the three crystallographic Coxeter diagrams, one checks numerically that the smallest root of the denominator is~$>1/2$. 
\end{proof}

We are now ready to apply the Howe--Moore property through the stabilizer
rigidity theorem of Creutz and Peterson. We use the following special case of \cite[Theorem~9.1]{CrePet}.

\begin{theorem}[Creutz--Peterson {\cite[Theorem~9.1]{CrePet}}]
\label{thm:CP-character}
Let \(\Gamma<L_1\times L_2\) be an irreducible lattice, where \(L_1,L_2\)
are second countable, non-discrete, non-compact, topologically simple,
totally disconnected, locally compact, Kazhdan groups with the Howe--Moore property.
Then any ergodic probability-measure-preserving action of $\Gamma$ either has finite orbits or has
finite stabilizers.
\end{theorem}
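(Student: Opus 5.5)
The statement is a specialisation of \cite[Theorem~9.1]{CrePet}, so my plan is to check that our hypotheses match theirs and then retrace the mechanism of their proof. I would make no attempt to reprove their operator-algebraic machinery.

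\textbf{Step 1: matching the hypotheses.} Creutz--Peterson work with an irreducible lattice $\Gamma$ in a product of locally compact second countable groups $L_1\times L_2$. Their key requirement on the factors is a \emph{character rigidity} input: each $L_i$ should be non-compact, topologically simple, have property (T), and satisfy the Howe--Moore property. I would first list their standing assumptions and check that each is implied by the list above:
\begin{itemize}
\item total disconnectedness and non-discreteness are included;
\item Howe--Moore gives that every non-trivial $L_i$-representation without invariant vectors is mixing, which is what forces ergodicity of the restricted actions.
\end{itemize}

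\textbf{Step 2: reduction to characters.} Given an ergodic probability-measure-preserving action $\Gamma\curvearrowright(Y,\mu)$ with stabilizer distribution $\theta$, I would pass to the fixed-point character
\[
\chi_\theta(g)=\mu(\{y\in Y : gy=y\}),
\]
which is a character of $\Gamma$. The heart of \cite{CrePet} is a noncommutative Margulis normal subgroup theorem. It says that for such $\Gamma$, every extremal character either factors through a finite quotient, or is supported on the (finite) virtual centre. I would decompose $\chi_\theta$ into extremal characters using ergodicity, and apply this dichotomy.

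\textbf{Step 3: from characters back to the action.}
\begin{itemize}
\item If $\chi_\theta$ vanishes outside a finite normal subgroup, then almost every stabilizer lies in that subgroup. Hence stabilizers are finite.
\item If $\chi_\theta$ comes from finite-dimensional, hence finite-image, data, then stabilizers have finite index almost surely. Ergodicity then forces the orbits to be finite.
\end{itemize}
Some care is needed because $\chi_\theta$ itself need not be extremal even when the action is ergodic. There I would invoke the IRS version, \cite[Theorem~9.1]{CrePet}, directly rather than the character version.

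\textbf{Main obstacle.} The hard step is the character rigidity in Step 2. It combines:
\begin{itemize}
\item the induced action of $L_1\times L_2$ on the von Neumann algebra $L\Gamma\bar\otimes L^\infty(L_1\times L_2/\Gamma)$;
\item property (T) of one factor, which gives invariant vectors or spectral gap;
\item Howe--Moore of the other factor, which gives mixing, hence ergodicity of the restricted actions on the relevant subalgebras.
\end{itemize}
Together these yield an intermediate factor theorem. I would not reproduce this argument. Instead I would cite it, and confine my own work to checking that the hypotheses stated here are exactly those required in \cite[\S9]{CrePet}.
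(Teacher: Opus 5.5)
Your final move, citing \cite[Theorem~9.1]{CrePet} and checking that the hypotheses on $\Gamma$, $L_1$, $L_2$ are among those required there, is exactly what the paper does. The statement is recorded as a special case of that theorem, and the paper gives no further proof.

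Steps~2 and~3, however, misdescribe the mechanism, and they could not serve as an alternative proof. The dichotomy you invoke (every extremal character of $\Gamma$ either factors through a finite quotient or is supported on the virtual centre) is character rigidity. That is the subject of the later paper \cite{CrePet2}, not of \cite{CrePet}, and it is not available under the present hypotheses. Concretely, take $\Gamma=\Lambda/Z$, which is infinite and simple. Your Step~2 would then assert that its only extremal characters are $\delta_e$ and the constant function $1$. The remark following the proof of Corollary~\ref{cor:KM-sharp} says this is left open. The route through \cite[Corollary~6.6]{CrePet2} would need square-integrability of the lattice $\Lambda_+$, which is not known. The stabilizer dichotomy of \cite{CrePet} is a weaker conclusion, and it is all the paper needs.

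Even granting character rigidity, Step~3 would have two further holes. First, $\chi_\theta$ need not be extremal for an ergodic action, as you note yourself. Second, ``finite-dimensional, hence finite image'' is not automatic: Kazhdan groups can have finite-dimensional unitary representations with infinite image. So keep the citation and drop the detour through characters.
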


Corollary~\ref{corollary_KM} from the introduction is a direct consequence of the following more precise statement.

\begin{corollary}\label{cor:KM-sharp}
Let $\Lambda$ be a minimal simply connected split Kac--Moody group over a finite field of order $q$, and $Z$ be its center. Suppose that the Weyl group $W$ of $\Lambda$ is of compact hyperbolic type $\mathcal D = (m_1, \dots, m_r)$. Suppose that $q = \qmin$ satisfies one of the conditions (1)--(7) of Proposition~\ref{prop:Kazhdan}. Then every invariant random subgroup of  $\Lambda/Z$ is a convex combination of the trivial ones, namely $\delta_e$ and $\delta_{\Lambda/Z}$
\end{corollary}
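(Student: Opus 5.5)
We will apply the Creutz--Peterson theorem (Theorem~\ref{thm:CP-character}) to $\Gamma=\Lambda/Z$ viewed as a lattice in the product of the two completions. Let $X_+$ and $X_-$ be the two halves of the twin building of $\Lambda$; both are thick, locally finite, of type $(W,S)$, with every panel containing $q+1$ chambers. Let $\overline{\Lambda}_\pm$ be the closures of the images of $\Lambda$ in $\Aut(X_\pm)$; these are complete Kac--Moody groups acting strongly transitively on $X_\pm$. The kernel of the action of $\Lambda$ on $X_\pm$ is the center $Z$. By \cite{CapRem} (Prop.~11, Cor.~16) and \cite{Mar}, each $L_\pm:=\overline{\Lambda}_\pm$ is topologically simple, non-discrete and non-compact, totally disconnected and second countable. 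Since $q=\qmin$ satisfies one of (1)--(7) of Proposition~\ref{prop:Kazhdan}, each $L_\pm$ has property (T). By Corollary~\ref{corollaryBB}, applied to a minimal non-spherical (compact hyperbolic) type with trivial compact quotients, each $L_\pm$ also has the Howe--Moore property. Note that conditions (1)--(7) are exactly those for which Proposition~\ref{prop:Kazhdan} applies; case (8) is excluded, presumably because of the lattice criterion below.

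Next we need that $\Gamma=\Lambda/Z$ embeds diagonally as an irreducible lattice in $L_+\times L_-$. Discreteness comes from the twin building: the stabilizer in $\Lambda$ of a pair of opposite chambers is a torus, which is finite. Finite covolume is the Rémy / Carbone--Garland criterion, namely $\sum_{w\in W} q^{-\ell(w)}<\infty$. This is the point where Lemma~\ref{lem:convergence-radius} enters: the growth series converges at $1/q$ provided $1/q$ is smaller than its radius of convergence, and since that radius exceeds $1/2$ and $q\geq 3$, this holds. Irreducibility follows because each projection $\Gamma\to L_\pm$ has dense image, by definition of the completions, and the factors are topologically simple.

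Theorem~\ref{thm:CP-character} now says that every ergodic pmp action of $\Gamma$ has either finite orbits or finite stabilizers. We translate this to IRS's. Let $\theta$ be an ergodic IRS, realized as the stabilizer distribution of an ergodic pmp action.
\begin{itemize}
\item[\emph{Finite orbits.}] By ergodicity the action is transitive on a finite set, so $\theta$ is supported on the conjugates of a finite-index subgroup. A proper finite-index subgroup of $\Gamma$ would give a nontrivial finite quotient; by Margulis-type normal subgroup theorems, or by the simplicity of $\Lambda/Z$ \cite{CapRem}, this is impossible for $\Gamma$ infinite and simple. Hence $\theta=\delta_\Gamma$.
\item[\emph{Finite stabilizers.}] The support of $\theta$ consists of finite subgroups. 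Since $\Gamma$ is simple, its normal subgroups are trivial, and a finite normal subgroup is certainly trivial. To conclude that an IRS supported on finite subgroups is $\delta_e$, we use that $\Gamma$ is an infinite group with trivial amenable radical: the subgroup generated by the support of an IRS carried by finite (hence amenable) subgroups lies in the amenable radical \cite{BDL}-type result. That radical is trivial, so $\theta=\delta_e$.
\end{itemize}
Convex combinations then follow from the ergodic decomposition.

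The main obstacle is the lattice/irreducibility step. It requires the precise finite-covolume criterion and a check that every condition (1)--(7) forces $q$ above the reciprocal of the growth radius. Passing from finite stabilizers to $\delta_e$ also needs the amenable-radical fact, which we would cite rather than prove.
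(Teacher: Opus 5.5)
Your proof is correct and follows the paper's overall route. You apply Theorem~\ref{thm:CP-character} to $\Lambda/Z<L_+\times L_-$, with property (T) from Proposition~\ref{prop:Kazhdan}, Howe--Moore via Corollary~\ref{corollaryBB}, the lattice property from R\'emy's criterion plus Lemma~\ref{lem:convergence-radius}, and a case analysis of the Creutz--Peterson dichotomy. Your finite-orbit case is the same as the paper's.

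The one real divergence is the finite-stabilizer case.
\begin{itemize}
\item The paper argues elementarily. A countable group has only countably many finite subgroups, so an ergodic IRS carried by them is uniform on a finite conjugacy class of some finite $H$. Then $N_{\Lambda/Z}(H)$ has finite index, hence equals $\Lambda/Z$ by simplicity. So $H$ is a finite normal subgroup and is trivial.
\item You instead invoke the Bader--Duchesne--L\'ecureux theorem that amenable IRSs are supported on subgroups of the amenable radical. This works but is much heavier. It also needs one line you did not supply: infinite simple groups can be amenable, so "infinite and simple" does not make the amenable radical trivial. You must show $\Lambda/Z$ is non-amenable. For instance, it is an infinite lattice in the Kazhdan group $L_+\times L_-$, hence is itself an infinite Kazhdan group.
\end{itemize}
The paper's route needs only countability and simplicity.

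Two smaller remarks.
\begin{itemize}
\item Corollary~\ref{corollaryBB} is stated under the hypotheses of Corollary~\ref{corollaryB}, which conditions (1)--(7) need not satisfy; type $(2,4,6)$ with $q=3$ is an example. What you actually use is its proof, where those hypotheses serve only to give property (T) of $G=P_S$. Here that input comes directly from Proposition~\ref{prop:Kazhdan}. The paper glosses over the same point when it cites Corollary~\ref{corollary_HM_KM}.
\item Your guess about why condition (8) is excluded is off. Proposition~\ref{prop:Kazhdan} does cover (8), and Lemma~\ref{lem:convergence-radius} gives the lattice property for every $q\geq 2$, so the lattice criterion is not the reason. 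This remark plays no role in your argument.
\end{itemize}
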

\begin{proof}
Let \(\Delta=\Delta_+\times\Delta_-\) be the twin building associated with
\(\Lambda\), and let \(L_\pm\) be the closures of the images of
\(\Lambda\) in \(\operatorname{Aut}(\Delta_\pm)\). Let also $Z$ be the center of $\Lambda$, which is finite.  

 First of all, we need to make sure that   the central quotient $\Lambda/Z$ embeds as a lattice in the automorphism group of its twin buildings. 
The minimal Kac--Moody group $\Lambda/Z$ is an irreducible lattice in the automorphism group of its twin buildings if and only if $\mathbf W(1/q)$ converges, where $\mathbf W$ is the  growth series of the Coxeter group $W$, see \cite{RemyCRAS}. Under our hypotheses on the type of $W$, this is indeed the case for all $q \geq 2$ by Lemma~\ref{lem:convergence-radius}.
By
\cite[Theorem~15 and Corollary~16]{CapRem}, the group \(\Lambda/Z\) is   infinite
and simple. Moreover the groups \(L_+\) and \(L_-\) are second countable,
non-discrete, non-compact, topologically simple,  totally disconnected
 locally compact groups acting strongly transitively on
\(\Delta_+\) and \(\Delta_-\), respectively.  Each building \(\Delta_\pm\) has compact-hyperbolic
crystallographic type and all panels have thickness \(q+1\).  It follows from Proposition~\ref{prop:Kazhdan} that $L_+$ and $L_-$ have Kazhdan's property~(T), and from  Corollary~\ref{corollary_HM_KM} that they have the Howe--Moore property.

Theorem~\ref{thm:CP-character} therefore applies to
\(\Lambda/Z<L_+\times L_-\). 
Let \(\theta\in\IRSerg(\Lambda/Z)\), and realize it as the stabilizer
distribution of an ergodic probability-measure-preserving action
\(\Lambda/Z \curvearrowright (Y,\mu)\). 
Hence,  this action is either has finite stabilizers or finite
orbits. 
In the first case, there are only countably many finite subgroups, so that $\theta$ must be concentrated on a finite conjugacy class of a finite subgroup $H < \Lambda/Z$. Hence $N_{\Lambda/Z}(H)$ has finite index in the simple group $\Gamma$. Thus, $N_{\Lambda/Z}(H) = \Gamma$ and hence $H$ is normal in $\Gamma.$ Since $H$ is finite, we conclude that $H$ is trivial and $\theta = \delta_{e}$.
In the finite-orbit case, almost every stabilizer has finite index in
\(\Lambda/Z\). Since \(\Lambda\) is infinite and simple, it has no non-trivial
finite quotients, so every finite orbit is a singleton. Hence
\(\theta=\delta_{\Lambda/Z}\). The result follows. 
\end{proof}

\begin{remark}
  It seems plausible that a lattice $\Lambda/Z$ also has no non-trivial characters. One possible way to establish this would be through  \cite[Corollary~6.6]{CrePet2}. This requires checking that the lattice $\Lambda_+ \leq  L_+$ defined as the projection of the intersection $\Lambda/Z \cap (L_+ \times B_-)$, is square-integrable, where $B_-$ is a compact open subgroup of $L_-$.  It is known that $\Lambda/Z$ itself is a square-integrable lattice in $L_+\times L_-$, see \cite{RemyIntegrability}, but this does not imply that $\Lambda_+$ is square-integrable: indeed $\Lambda_+$ can fail to be finitely generated, see Remark~\ref{rem:lowthickness}.  The condition of square-integrability of a lattice is related to the non-distortion of the lattice in its ambient locally compact group, see \cite[Theorem~1.3]{CapRem_nondist}. It is known that $\Lambda/Z$ is non-distorted in $L_+ \times L_-$, whereas in the hyperbolic case, the lattice $\Lambda_+$, which is non-uniform, is always distorted in $L_+$, see \cite[Theorem~1.1 and Section~3.3]{CapRem_nondist}. 
\end{remark}

\section{Proof of Corollaries~\ref{corollaryC} and \ref{corollary_open}}
\label{sec:regular}

\begin{proposition}\label{prop:amen-parab}
Let $X$ be a thick locally finite building of finite rank and type $(W,S)$, and let $G<\Aut(X)$ be a closed type-preserving subgroup acting Weyl-transitively. For every parabolic subgroup $P_J \leq G$, the following conditions are equivalent. 
\begin{enumerate}[label=(\roman*)]
\item $J$ is spherical. 
\item $P_J$ is compact. 
\item $P_J$ is amenable. 
\item $P_J$ does not contain any non-abelian discrete free subgroup. 
\end{enumerate}
\end{proposition}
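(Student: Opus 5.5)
We prove the cycle of implications (i)$\Rightarrow$(ii)$\Rightarrow$(iii)$\Rightarrow$(iv)$\Rightarrow$(i).

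\textbf{(i)$\Rightarrow$(ii).} If $W_J$ is finite, then $P_J=\bigsqcup_{w\in W_J}BwB$ is a finite union of double cosets. Each double coset is compact, since $B$ is compact and local finiteness gives $m(BwB)<\infty$. Hence $P_J$ is compact. Equivalently, $P_J$ is the stabilizer of a residue of type $J$, which is a finite set of chambers, so $P_J$ is compact as an open subgroup of finite index over $B$.

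\textbf{(ii)$\Rightarrow$(iii)} holds because compact groups are amenable.

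\textbf{(iii)$\Rightarrow$(iv)} holds because closed subgroups of amenable locally compact groups are amenable, while a discrete non-abelian free subgroup is a closed non-amenable subgroup.

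\textbf{(iv)$\Rightarrow$(i).} This is the real content, and we argue by contraposition. Assume $W_J$ is infinite; we must produce a discrete free subgroup of rank~$2$ in $P_J$.

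The residue $R$ of type $J$ containing $c_0$ is itself a thick locally finite building of type $(W_J,J)$, and $P_J$ acts on $R$ Weyl-transitively. The kernel of this action is contained in $B$, hence is compact. It therefore suffices to find two elements of $P_J$ generating a free group that acts on the chambers of $R$ with finite stabilizers: such a subgroup meets the compact open subgroup $B$ in a finite set, so it is discrete.

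Since $W_J$ is an infinite Coxeter group, it contains a non-abelian free subgroup, or is virtually abelian of affine type. In either case, pick $w\in W_J$ of infinite order. I would realize $w$ by an element $g\in P_J$ stabilizing an apartment $\Sigma\ni c_0$ of $R$ and acting on $\Sigma$ as a translation along $w$; such a $g$ exists by Weyl-transitivity applied along the chambers $c_0,\ wc_0,\ w^2c_0$. Then $g$ is hyperbolic for the Davis CAT(0) realization of $R$.

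By thickness, there is a second apartment $\Sigma'$ branching off $\Sigma$. Using Weyl-transitivity again, conjugate $g$ by some $h\in P_J$ that moves the axis of $g$ to an axis whose endpoints at infinity are disjoint from those of $g$. A ping-pong argument for sufficiently high powers $g^N$ and $(hgh^{-1})^N$, acting on the visual boundary (or on the chambers of $R$), then shows that these two elements freely generate a free group acting properly on $R$. That group is discrete, contradicting (iv).

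The main obstacle is producing a hyperbolic element of $P_J$ together with a conjugate that has genuinely independent fixed points at infinity. This requires care because Weyl-transitivity is weaker than strong transitivity. If this geometric route becomes delicate, a fallback is to show directly that $P_J$ is non-amenable, by using Corollary~\ref{cor:norm-one-bekka} with the regular representation to get $\|\lambda(T_w)\|<1$, which Kesten's criterion would give. However, that fallback only yields non-amenability, i.e. the contrapositive of (iii)$\Rightarrow$(i). To reach (iv) we still need the free subgroup, so the ping-pong construction is unavoidable.
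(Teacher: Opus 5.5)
The implications (i)$\Rightarrow$(ii)$\Rightarrow$(iii)$\Rightarrow$(iv) are fine. Your strategy for (iv)$\Rightarrow$(i) is also the paper's: a ping-pong producing a free subgroup that acts freely on $R$, hence is discrete.

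The gap is in your ping-pong criterion, ``the endpoints at infinity of $g$ and $hgh^{-1}$ are disjoint''. This criterion works when $|R|$ is Gromov hyperbolic, but not in general. You neither reduce to a minimal non-spherical $J$ nor distinguish cases, so your argument must also cover $J$ irreducible affine of rank $\geq 3$, and there it fails.

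Take $G=\mathrm{SL}_3(\mathbf Q_p)$ acting on its Bruhat--Tits building. The element $g=\mathrm{diag}(p,1,p^{-1})$ stabilizes the standard apartment and acts on it as an infinite-order translation, which is exactly the kind of element you construct. For a suitable monomial matrix $h$, one gets $hgh^{-1}=\mathrm{diag}(1,p,p^{-1})$. The two translation vectors are not proportional, so the four endpoints are distinct. Yet $g$ and $hgh^{-1}$ commute, so no powers of them generate a free group.

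The missing idea is the paper's case split, after replacing $J$ by a minimal non-spherical subset:
\begin{enumerate}
\item If $J$ is of tree or compact hyperbolic type, then $|R_J|$ is CAT($-1$), and boundary ping-pong with two independent loxodromics applies.
\item If $J$ is affine of rank $\geq 3$, one needs \emph{regular} elements (Caprace--Ciobotaru) together with strong transitivity (Kramer). These give two regular elements whose attracting and repelling chambers in the spherical building at infinity are mutually opposite, followed by a higher-rank ping-pong as in Ballmann--Brin. Alternatively, one can use the Bruhat--Tits identification and the Tits alternative.
\end{enumerate}

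Two further steps are unjustified even in the hyperbolic case.

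First, Weyl-transitivity applied to $(c_0,wc_0)\mapsto(wc_0,w^2c_0)$ yields some $g$ with these values. Nothing forces $g$ to stabilize $\Sigma$; that would be strong transitivity, which is not assumed. So hyperbolicity of $g$ is not established for an arbitrary $w$ of infinite order. It can be rescued when $\ell(w^n)=n\ell(w)$ for all $n$. Then concatenating minimal galleries gives $\delta(c_0,g^nc_0)=w^n$, so $g$ is not elliptic. Semisimplicity of cellular isometries of the Davis realization then makes $g$ hyperbolic.

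Second, write $\xi^\pm$ for the endpoints of $g$. The existence of your $h$ is not a formal consequence of Weyl-transitivity. Suppose $h\{\xi^+,\xi^-\}$ meets $\{\xi^+,\xi^-\}$ for every $h\in P_J$. Then $P_J$ is covered by at most four cosets of $\Stab_{P_J}(\xi^+)$ and $\Stab_{P_J}(\xi^-)$. By B.~H.~Neumann's lemma, a finite-index subgroup of $P_J$ then fixes a point at infinity. Ruling this out needs a genuine input, since cocompactness alone does not suffice: the stabilizer of an end of a regular tree acts cocompactly. The paper gets it from the unimodularity of $P_J$, which is generated by compact rank-one parabolics, combined with Caprace--Monod's Theorem~M.
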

\begin{proof}
The only non-obvious implication is from (iv) to (i). Let us assume that $J$ is non-spherical. We shall prove that $P_J$ contains a non-abelian discrete free subgroup. Upon replacing $J$ by a subset, we may and shall assume that $J$ is irreducible and minimal non-spherical. 

Let $R_J \subset X$ be a $J$-residue whose stabilizer is $P_J$. Thus $P_J$ acts Weyl-transitively on $R_J$. Observe that $|R_J|$ is a closed convex subset in the Davis realization $|X|$ of the building $X$, which is CAT($0$), see \cite[\S18.3]{Davis}. Since $P_J$ is generated by the parabolic subgroups of rank~one that it contains, all of which are compact, it follows that $P_J$ is unimodular. 

Since $J$  is irreducible and minimal non-spherical, there are two cases: either it is affine, or it is compact hyperbolic. If $J$ is affine of rank two, or compact hyperbolic, then   the building $|R_J|$ is CAT($-1$). Since $P_J$ is unimodular, it follows from \cite[Theorem~M]{CapMon_amen} that it does not fix any point at infinity of $|R_J|$. Therefore, it contains two loxodromic isometries with distinct fixed points at infinity. The Ping-Pong Lemma shows that any sufficiently large positive powers of those two isometries generate a non-abelian free subgroup, that acts freely on $|R_J|$ and is thus a discrete subgroup of $P_J$. 

Assume now that $J$ is affine of rank at least three. By \cite[Theorem~1.2]{CapCio}, the group $P_J$ contains a regular element. The fact that $P_J$ acts Weyl-transitively on $R_J$ implies that it acts strongly transitively, see \cite[Theorem~5.16]{Kramer}. Hence the action on the spherical building at infinity is strongly transitive. We  infer that  $P_J$ contains two regular elements whose attracting and repelling fixed chambers at infinity are mutually opposite. Two such elements form a Ping-Pong pair, hence they have powers that generate a discrete non-abelian free subgroup, see the proof of Theorem~8.10 in  \cite{BalBri}. 

In the affine case, one may also conclude alternatively by relying on the classical Tits alternative, as follows. Since the $P_J$-action on $R_J$ is strongly transitive, it follows from \cite[Corollary~E]{CapMon_Bieberbach} that  the building $R_J$ is Bruhat--Tits.  Therefore, the full automorphism group $\Aut(R_J)$ has a cocompact normal subgroup $L_J$ that is an isotropic simple linear algebraic group over a local field, see   \cite[Proposition~9.1]{BCL}.  Such a linear group $L_J$ contains non-abelian discrete free subgroups. On the other hand, the natural map $P_J \to \Aut(R_J)$ has a compact kernel $K$. Moreover the quotient $P_J/K$ is simple-by-compact in view of Proposition~\ref{prop:simple-by-compact}. It follows that the image of $P_J$ in $ \Aut(R_J)$ contains $L_J$. Hence $P_J$ contains a non-abelian discrete free subgroup.
\end{proof}

\begin{proof}[Proof of Corollary~\ref{corollaryC}]
Let $\pi\weakcontain\lambda_G$, and suppose that $\pi$ is not  mixing. By Theorem~\ref{theoremA}, there is a minimal non-spherical subset $J\subset S$ such that, with $P=P_J$, one has $\one_P\weakcontain (\pi|_P)\otimes\ol{(\pi|_P)} \weakcontain (\lambda_G|_P) \otimes \ol{(\lambda_G|_P)} \sim \lambda_P$ by   \cite[Corollary~E.2.6(ii)]{BHV}. Hence $\one_P\weakcontain\lambda_P$ and  $P$ is amenable by Hulanicki's criterion \cite{Hulanicki}. Thus $P$ is of spherical type by Proposition~\ref{prop:amen-parab}, a contradiction. Therefore $\pi$ is mixing.
\end{proof}

The following elementary fact is well-known. 

\begin{lemma}\label{lem:fd}
Let $\Omega$ be a set and $H$ be a group acting on $\Omega$ and let $\pi$ be the permutation representation of $H$ on $\ell^2(\Omega)$. Then $\pi$ contains a   finite-dimensional subrepresentation if and only if $\Omega$ has a finite orbit. 
\end{lemma}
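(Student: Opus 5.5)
Both implications can be proved directly by working with orbits of $\Omega$. Throughout, write $\pi$ for the permutation representation $(\pi(h)f)(x)=f(h^{-1}x)$ on $\ell^2(\Omega)$.

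\emph{If $\Omega$ has a finite orbit $O$, then $\pi$ has a finite-dimensional subrepresentation.} The indicator function $\one_O$ lies in $\ell^2(\Omega)$, since $O$ is finite. It is non-zero and $H$-invariant. Hence $\mathbb C\one_O$ is a one-dimensional (trivial) subrepresentation. More generally, $\ell^2(O)$ is a finite-dimensional invariant subspace.

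\emph{If $\pi$ has a non-zero finite-dimensional invariant subspace $V$, then $\Omega$ has a finite orbit.} There are two natural routes.

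The first route uses the trace. Let $E$ be the orthogonal projection onto $V$, and consider the function $\phi(x)=\ip{E\delta_x}{\delta_x}=\norm{E\delta_x}^2$ on $\Omega$.
\begin{itemize}
\item Since $V$ is invariant, $E$ commutes with $\pi(h)$. Because $\pi(h)\delta_x=\delta_{hx}$, this gives $\phi(hx)=\phi(x)$, so $\phi$ is constant on each orbit.
\item We have $\sum_x\phi(x)=\Tr(E)=\dim V$, which is finite and positive.
\end{itemize}
Hence some $x$ has $\phi(x)>0$. The orbit $Hx$ then contributes $\abs{Hx}\,\phi(x)\le\dim V$ to the sum, so $Hx$ is finite.

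The second route is an alternative, more elementary argument. Pick a unit vector $f\in V$. Then $\pi(H)f$ lies in the unit sphere of $V$, which is compact because $V$ is finite-dimensional.
\begin{itemize}
\item Since $f\in\ell^2(\Omega)$, there is some $c>0$ such that the set $F=\{x:\abs{f(x)}\ge c\}$ is finite and non-empty.
\item The level sets of $\abs{f}$ are permuted by $H$ only up to moving $f$ itself, so this does not immediately give invariance of $F$.
\end{itemize}
This route therefore needs more care, so I would use the trace argument.

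The only points needing care are the identity $\Tr(E)=\sum_x\ip{E\delta_x}{\delta_x}$, which holds because $\{\delta_x\}$ is an orthonormal basis, and the invariance of $\phi$. Both are routine.
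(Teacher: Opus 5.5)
Your trace argument is correct. It takes a more elementary route than the paper.

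\emph{The paper's route.} It argues in three steps. First, by \cite[Proposition~A.1.12]{BHV}, $\pi$ has a finite-dimensional subrepresentation if and only if $\pi\otimes\ol\pi$ has a non-zero invariant vector. Second, $\pi\otimes\ol\pi$ is the permutation representation on $\ell^2(\Omega\times\Omega)$, so such a vector exists if and only if $\Omega\times\Omega$ has a finite orbit. Third, this happens if and only if $\Omega$ has a finite orbit.

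\emph{How your proof relates.} Your argument is essentially an unpacking of the easy half of this chain. The projection $E$ is an invariant vector of $\HS(\ell^2(\Omega))\cong\pi\otimes\ol\pi$. Its kernel $(x,y)\mapsto\ip{E\delta_y}{\delta_x}$ is an $H$-invariant function in $\ell^2(\Omega\times\Omega)$, and your $\phi$ is its restriction to the diagonal. Because $\Tr(E)=\dim V>0$, that restriction is non-zero. You therefore land directly on a finite orbit in $\Omega$, without passing through $\Omega\times\Omega$.

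For the converse you simply exhibit $\one_O$ (or $\ell^2(O)$). The paper's chain of equivalences formally runs this direction through the harder half of \cite[Proposition~A.1.12]{BHV}, namely the spectral theory of an invariant compact operator, which is more than is needed here.

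\emph{What each buys.} The paper's version is three lines and stays in the $\pi\otimes\ol\pi$ language used throughout, matching Bekka amenability. Yours is self-contained and gives the explicit bound $\abs{Hx}\le\dim V/\phi(x)$. The invariance of $\phi$ does use unitarity, since $V^\perp$ must be invariant for $E$ to commute with $\pi(h)$, but that holds here.

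\emph{The abandoned second route.} It does no harm, since you do not rely on it. To complete it, the right $H$-invariant quantity is $\sum_i\abs{f_i(x)}^2$ for an orthonormal basis $(f_i)$ of $V$. That quantity is exactly $\phi(x)$, so the second route collapses into your trace argument.
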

\begin{proof}
By \cite[Proposition~A.1.12]{BHV}, $\pi$ contains a finite-dimensional subrepresentation if and only if $\pi\otimes\ol\pi$ has a non-zero invariant vector. This happens if and only if the action of $H$ on $\Omega \times \Omega$ has a finite orbit if and only if the action of $H$ on $\Omega$ has a finite orbit.
\end{proof}

\begin{proof}[Proof of Corollary~\ref{corollary_open}]
Let $O \leq G$ be a non-compact open subgroup.  
 Then the quasi-regular representation    $\pi$   of $G$ on $\ell^2(G/O)$ is not mixing, since the indicator function $\one_O(g)$ is a diagonal matrix coefficient of $\pi$ that is not $C_0$. By Corollary~\ref{corollaryB}, this implies that there is a minimal non-spherical subset $J \subset S$ such that $\pi|_{P_J}$ contains a finite-dimensional subrepresentation of $P_J$. In view of Lemma~\ref{lem:fd}, we infer that some $P_J$-orbit on the coset space $G/O$ is finite. Hence, for some $g \in G$ and some   finite index open  subgroup of $P'_J \leq P_J$, we have $gP_J' g^{-1} \leq O$. 
\end{proof}

\section{Further directions and open problems} \label{sec:outlook}

As mentioned in the introduction, the two assertions in  Theorem~\ref{theoremA} are not mutually exclusive. We have seen in Corollary~\ref{corollaryB} that, in $2$-spherical type, they do become exclusive provided the thickness is moderately large, see Section~\ref{sec:Kazhdan}. It is natural to wonder what happens when the thickness is small.
 
Using work of Ashcroft~\cite{Ashcroft}, we can show that Bischof's example discussed   in Remark~\ref{rem:lowthickness} does in fact not only fail to have property (T) but it also has the Haagerup property:

\begin{proposition}\label{prop:haagerup-444}
Let
$
(G,(U_\alpha)_{\alpha\in\Phi})
$
be an RGD-system of type $(4,4,4)$ over $\mathbf F_2$, and let
$
\Delta=(\Delta_+,\Delta_-,\delta_*)
$
be its associated twin building. Then the locally compact groups
$\Aut(\Delta_\pm)$ have the Haagerup property.
\end{proposition}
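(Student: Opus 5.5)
The plan is to deduce the Haagerup property of $\Aut(\Delta_\pm)$ from that of a lattice, using Ashcroft's work on the relevant discrete groups. The natural candidate is the positive unipotent subgroup $U_+$ of Remark~\ref{rem:lowthickness}. For RGD-systems over $\mathbf F_2$ of type $(4,4,4)$, $U_+$ acts simply transitively on the chambers of $\Delta_-$ (more precisely, on the chambers opposite the fundamental chamber $c_+$). Its image in $\Aut(\Delta_-)$ is therefore a discrete subgroup. Because the chambers not opposite $c_+$ form a set of small measure, the growth-series condition $\mathbf W(1/2)<\infty$ of Lemma~\ref{lem:convergence-radius}, as used in \cite{RemyCRAS}, shows that $U_+$ is a (non-uniform) lattice in $\Aut(\Delta_-)$. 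The same argument with signs exchanged gives a lattice $U_-$ in $\Aut(\Delta_+)$.

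The second step is to invoke Ashcroft~\cite{Ashcroft}. As I understand it, he shows that such groups $U_\pm$ of type $(4,4,4)$ over $\mathbf F_2$ act properly on CAT(0) cube complexes, or more generally on spaces with walls. This gives the Haagerup property for $U_\pm$. The expected route is to build walls from the codimension-one structure of the $(4,4,4)$ triangle group: the rank-two residues have $m=4$, so they are generalized quadrangles of order $(2,2)$.

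The third step transfers the Haagerup property from the lattice to the ambient group. For a lattice $\Gamma$ in a locally compact second countable group $L$, the Haagerup property passes from $\Gamma$ to $L$ provided $\Gamma$ is of finite covolume: one induces a $C_0$ representation of $\Gamma$ almost containing $\one_\Gamma$ to $L$, and this is the known induction argument of \cite[\S6]{CCJJV} (Proposition 6.1.5 there). Applying it to $U_\mp<\Aut(\Delta_\pm)$ gives the proposition. The hypotheses to verify are second countability (clear, by local finiteness) and the lattice property from the first step.

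The main obstacle is the first two steps together. Ashcroft's result must be shown to apply exactly to $U_+$, and not only to some related group such as a Kac--Moody-type lattice. Should \cite{Ashcroft} only treat a finite-index subgroup or a quotient by a finite normal subgroup, I would use the fact that the Haagerup property is stable under finite extensions and finite-index overgroups. Should it treat instead the full lattice $G$ in $\Aut(\Delta_+)\times\Aut(\Delta_-)$, I would argue differently: the Haagerup property descends to closed subgroups and passes to the product via the lattice-induction argument, and then projects to each factor because each factor is a closed subgroup of the product.
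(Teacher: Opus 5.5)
Your steps 1 and 3 are essentially sound. $U_+$ acts simply transitively on the chambers of $\Delta_-$ opposite $c_+$, its covolume is controlled by $\mathbf W(1/2)<\infty$, and \cite[Proposition~6.1.5]{CCJJV} transfers the Haagerup property from a lattice to the ambient group.

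Step 2, which carries the whole argument, is not supported. The results of \cite{Ashcroft} concern groups acting properly discontinuously and \emph{cocompactly} on polygonal complexes satisfying a link condition. Virtual specialness further needs hyperbolicity, via Agol's theorem. His Corollary~B is about the Carbone--Kangaslampi--Vdovina groups \cite{CKV}, which act simply transitively on the vertices of hyperbolic triangular buildings of minimal thickness, hence geometrically.

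$U_+$ is the opposite kind of group. It has infinitely many orbits on chambers of $\Delta_-$, at least one for each codistance $w\in W$, so it is a non-uniform lattice. It is not even finitely generated (Remark~\ref{rem:lowthickness}), so it cannot act geometrically on any complex, and nothing in \cite{Ashcroft} applies to it. You give no independent argument that $U_+$ is a-T-menable. Your fallback via the lattice $G<\Aut(\Delta_+)\times\Aut(\Delta_-)$ fails for the same reason: $G$ is not hyperbolic and does not act geometrically on a polygonal $2$-complex.

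The missing idea is to use a \emph{uniform} lattice that Ashcroft does cover, together with a reason why it lives in $\Aut(\Delta_\pm)$. Since the RGD-system is arbitrary and need not come from a Kac--Moody group, it is not a priori clear that $\Delta_\pm$ admits such a lattice.

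The paper supplies this with Bischof's observation \cite[Remark~1(b)]{BischofUncountably}. All buildings arising from RGD-systems of type $(4,4,4)$ over $\mathbf F_2$ are isomorphic to the building of the split Kac--Moody group of that type. That building carries a lattice $\Gamma$ acting simply transitively on vertices \cite[p.~102]{CKV}. Then \cite[Corollary~B]{Ashcroft} shows that $\Gamma$ is virtually special, hence has the Haagerup property. Finally \cite[Proposition~6.1.5]{CCJJV}, needed here only in the cocompact case, gives the conclusion for $\Aut(\Delta_\pm)$. With this replacement your step 1 becomes unnecessary.
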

\begin{proof}
Put $X=\Delta_-$. Bischof observes
that the buildings $X$ obtained in this way from all RGD-systems of
type $(4,4,4)$ over $\mathbf F_2$ are mutually isomorphic; see
\cite[Remark~1(b)]{BischofUncountably}. In particular, $X$ is
isomorphic to the building associated with a split Kac--Moody group
of type $(4,4,4)$ over $\mathbf F_2$. That building admits a cocompact lattice $\Gamma$ acting simply transitively on its vertices;
see \cite[p.~102]{CKV}. Thus $\Gamma$ is a uniform lattice in
$\Aut(X)$.

By \cite[Corollary~B]{Ashcroft}, the group $\Gamma$ is virtually special
and, in particular, has the Haagerup property. Since the Haagerup property
is invariant under passage between a locally compact group and a lattice
\cite[Proposition~6.1.5]{CCJJV}, it follows that $\Aut(X)$ has the Haagerup
property.
\end{proof}

Thus, apart from the  case of trees mentioned in the introduction, the buildings in the previous proposition provide  examples of buildings whose automorphism group does not satisfy a strict dichotomy in Theorem~\ref{theoremA}. It would be interesting to know if there are other examples of this kind -- and if in fact automorphism groups of buildings of minimal non-spherical type do always either have property (T) \emph{or}  the Haagerup property.

Another natural question arising from this work is whether every \textit{irreducible} infinite-dimensional unitary representation of a group $G$ as in Theorem~\ref{theoremA} is non-amenable in the sense of Bekka. This would ensure that the conditions on the thickness in Corollary~\ref{corollaryBB} can be replaced by the sole requirement that $X$ is thick: indeed, every irreducible non-trivial representation of $G$ would then be mixing by Theorem~\ref{theoremA}, hence $G$ would have the Howe--Moore property by \cite[Proposition~2.3.2]{Zimmer1984}. 

\section*{Acknowledgments}

This paper is the result of joint efforts after a first version was circulated by the second author. This first version contained a proof of the Howe--Moore property for groups of minimal non-spherical type and large thickness, including the applications to  rigidity of invariant random subgroups. When writing the first version, OpenAI's GPT was used for substantial interactive research and writing assistance.        

The second author thanks Uri Bader for helpful comments, especially asking about Remark~\ref{rem:Kunze-Stein}.
	
\bibliographystyle{abbrv} 
\bibliography{Mixing}

@article {EJK,
    AUTHOR = {Ershov, Mikhail and Jaikin-Zapirain, Andrei and Kassabov,
              Martin},
     TITLE = {Property {$(T)$} for groups graded by root systems},
   JOURNAL = {Mem. Amer. Math. Soc.},
  FJOURNAL = {Memoirs of the American Mathematical Society},
    VOLUME = {249},
      YEAR = {2017},
    NUMBER = {1186},
     PAGES = {v+135},
      ISSN = {0065-9266,1947-6221},
      ISBN = {978-1-4704-2604-0; 978-1-4704-4139-5},
   MRCLASS = {22D10 (17B22 17B70 20E42)},
  MRNUMBER = {3724373},
MRREVIEWER = {Dave\ Witte\ Morris},
       DOI = {10.1090/memo/1186},
       URL = {https://doi.org/10.1090/memo/1186},
}

@unpublished{BischofUncountably,
  author        = {Bischof, Sebastian},
  title         = {Uncountably many $2$-spherical groups of
                   {Kac--Moody} type of rank $3$ over {$\mathbb{F}_2$}},
  year          = {2025},
  note        = {Preprint, arXiv:2504.17513},
}

@article{CKV,
  author  = {Carbone, Lisa and Kangaslampi, Riikka and Vdovina, Alina},
  title   = {Groups acting simply transitively on vertex sets of
             hyperbolic triangular buildings},
  journal = {LMS J. Comput. Math.},
  volume  = {15},
  year    = {2012},
  pages   = {101--112},
  doi     = {10.1112/S1461157012000083},
  url     = {https://doi.org/10.1112/S1461157012000083}
}

@article{Ashcroft,
  author  = {Ashcroft, Calum J.},
  title   = {Link conditions for cubulation},
  journal = {Int. Math. Res. Not. IMRN},
  year    = {2023},
  number  = {12},
  pages   = {9950--10012},
  doi     = {10.1093/imrn/rnac111},
  url     = {https://doi.org/10.1093/imrn/rnac111}
}

@book{CCJJV,
  author    = {Cherix, Pierre-Alain and Cowling, Michael and
               Jolissaint, Paul and Julg, Pierre and Valette, Alain},
  title     = {Groups with the {Haagerup} Property:
               {Gromov}'s a-{T}-menability},
  series    = {Progress in Mathematics},
  volume    = {197},
  publisher = {Birkh{\"a}user Verlag},
  address   = {Basel},
  year      = {2001}
}

@article {Ka11,
    AUTHOR = {Kassabov, Martin},
     TITLE = {Subspace arrangements and property {T}},
   JOURNAL = {Groups Geom. Dyn.},
  FJOURNAL = {Groups, Geometry, and Dynamics},
    VOLUME = {5},
      YEAR = {2011},
    NUMBER = {2},
     PAGES = {445--477},
      ISSN = {1661-7207,1661-7215},
   MRCLASS = {20F65 (20F55 22D10)},
  MRNUMBER = {2782180},
MRREVIEWER = {Pierre-Emmanuel\ Caprace},
       DOI = {10.4171/GGD/134},
       URL = {https://doi.org/10.4171/GGD/134},
}

@article {HoweMoore1979,
    AUTHOR = {Howe, Roger E. and Moore, Calvin C.},
     TITLE = {Asymptotic properties of unitary representations},
   JOURNAL = {J. Functional Analysis},
  FJOURNAL = {Journal of Functional Analysis},
    VOLUME = {32},
      YEAR = {1979},
    NUMBER = {1},
     PAGES = {72--96},
      ISSN = {0022-1236},
   MRCLASS = {22E50 (22D10 43A30 43A80)},
  MRNUMBER = {533220},
MRREVIEWER = {Jonathan\ M.\ Rosenberg},
       DOI = {10.1016/0022-1236(79)90078-8},
       URL = {https://doi.org/10.1016/0022-1236(79)90078-8},
}

@incollection {LubotzkyMozes1992,
    AUTHOR = {Lubotzky, Alexander and Mozes, Shahar},
     TITLE = {Asymptotic properties of unitary representations of tree
              automorphisms},
 BOOKTITLE = {Harmonic analysis and discrete potential theory ({F}rascati,
              1991)},
     PAGES = {289--298},
 PUBLISHER = {Plenum, New York},
      YEAR = {1992},
      ISBN = {0-306-44225-6},
   MRCLASS = {22D40 (20E08 58F11)},
  MRNUMBER = {1222467},
MRREVIEWER = {Alexander\ Starkov},
}

@article {BuMo_lattices,
    AUTHOR = {Burger, Marc and Mozes, Shahar},
     TITLE = {Lattices in product of trees},
   JOURNAL = {Inst. Hautes \'Etudes Sci. Publ. Math.},
  FJOURNAL = {Institut des Hautes \'Etudes Scientifiques. Publications
              Math\'ematiques},
    NUMBER = {92},
      YEAR = {2000},
     PAGES = {151--194},
      ISSN = {0073-8301,1618-1913},
   MRCLASS = {20E08 (22E40)},
  MRNUMBER = {1839489},
MRREVIEWER = {Alexander\ Lubotzky},
       URL = {http://www.numdam.org/item?id=PMIHES_2000__92__151_0},
}

@article {Ciobotaru,
    AUTHOR = {Ciobotaru, Corina},
     TITLE = {A unified proof of the {H}owe-{M}oore property},
   JOURNAL = {J. Lie Theory},
  FJOURNAL = {Journal of Lie Theory},
    VOLUME = {25},
      YEAR = {2015},
    NUMBER = {1},
     PAGES = {65--89},
      ISSN = {0949-5932},
   MRCLASS = {22D10 (20E42)},
  MRNUMBER = {3345827},
MRREVIEWER = {Alain\ Valette},
}

@book {AbramenkoBrown,
    AUTHOR = {Abramenko, Peter and Brown, Kenneth S.},
     TITLE = {Buildings},
    SERIES = {Graduate Texts in Mathematics},
    VOLUME = {248},
      NOTE = {Theory and applications},
 PUBLISHER = {Springer, New York},
      YEAR = {2008},
     PAGES = {xxii+747},
      ISBN = {978-0-387-78834-0},
   MRCLASS = {20E42 (20F55 20J06 51E24 51F15)},
  MRNUMBER = {2439729},
MRREVIEWER = {Ralf\ Koehl},
       DOI = {10.1007/978-0-387-78835-7},
       URL = {https://doi.org/10.1007/978-0-387-78835-7},
}

@article {BekkaAmenable,
    AUTHOR = {Bekka, B.},
     TITLE = {Amenable unitary representations of locally compact groups},
   JOURNAL = {Invent. Math.},
  FJOURNAL = {Inventiones Mathematicae},
    VOLUME = {100},
      YEAR = {1990},
    NUMBER = {2},
     PAGES = {383--401},
      ISSN = {0020-9910,1432-1297},
   MRCLASS = {22D10 (43A07 46L99)},
  MRNUMBER = {1047140},
MRREVIEWER = {T.\ Sund},
       DOI = {10.1007/BF01231192},
       URL = {https://doi.org/10.1007/BF01231192},
}

@book {BHV,
    AUTHOR = {Bekka, Bachir and de la Harpe, Pierre and Valette, Alain},
     TITLE = {Kazhdan's property ({T})},
    SERIES = {New Mathematical Monographs},
    VOLUME = {11},
 PUBLISHER = {Cambridge University Press, Cambridge},
      YEAR = {2008},
     PAGES = {xiv+472},
      ISBN = {978-0-521-88720-5},
   MRCLASS = {22-02 (22E40 28D15 37A15 43A07 43A35)},
  MRNUMBER = {2415834},
MRREVIEWER = {Markus\ Neuhauser},
       DOI = {10.1017/CBO9780511542749},
       URL = {https://doi.org/10.1017/CBO9780511542749},
}

@article {SemalOlshanskii,
    AUTHOR = {Semal, Lancelot},
     TITLE = {Unitary representations of totally disconnected locally
              compact groups satisfying {O}l'\v shanski\u i's
              factorisation},
   JOURNAL = {Represent. Theory},
  FJOURNAL = {Representation Theory. An Electronic Journal of the American
              Mathematical Society},
    VOLUME = {27},
      YEAR = {2023},
     PAGES = {356--414},
      ISSN = {1088-4165},
   MRCLASS = {22D10 (20E08 51E24 57M07)},
  MRNUMBER = {4609145},
MRREVIEWER = {Robert\ S.\ Doran},
       DOI = {10.1090/ert/637},
       URL = {https://doi.org/10.1090/ert/637},
}

@article {Hulanicki,
    AUTHOR = {Hulanicki, Andrzej},
     TITLE = {Means and {F}\o lner condition on locally compact groups},
   JOURNAL = {Studia Math.},
  FJOURNAL = {Polska Akademia Nauk. Instytut Matematyczny. Studia
              Mathematica},
    VOLUME = {27},
      YEAR = {1966},
     PAGES = {87--104},
      ISSN = {0039-3223,1730-6337},
   MRCLASS = {22.65 (42.50)},
  MRNUMBER = {195982},
MRREVIEWER = {M.\ M.\ Day},
       DOI = {10.4064/sm-27-2-87-104},
       URL = {https://doi.org/10.4064/sm-27-2-87-104},
}

@article {CapKas,
    AUTHOR = {Caprace, Pierre-Emmanuel and Kassabov, Martin},
     TITLE = {Tame automorphism groups of polynomial rings with property
              ({T}) and infinitely many alternating group quotients},
   JOURNAL = {Trans. Amer. Math. Soc.},
  FJOURNAL = {Transactions of the American Mathematical Society},
    VOLUME = {376},
      YEAR = {2023},
    NUMBER = {11},
     PAGES = {7983--8021},
      ISSN = {0002-9947,1088-6850},
   MRCLASS = {22D55 (05C48 14E07 20D06 20F67)},
  MRNUMBER = {4657226},
MRREVIEWER = {Piotr\ Mizerka},
       DOI = {10.1090/tran/8988},
       URL = {https://doi.org/10.1090/tran/8988},
}

@article {Oppenheim,
    AUTHOR = {Oppenheim, Izhar},
     TITLE = {Property ({T}) for groups acting on affine buildings},
   JOURNAL = {Bull. Lond. Math. Soc.},
  FJOURNAL = {Bulletin of the London Mathematical Society},
    VOLUME = {57},
      YEAR = {2025},
    NUMBER = {10},
     PAGES = {3151--3162},
      ISSN = {0024-6093,1469-2120},
   MRCLASS = {20F65 (22D55)},
  MRNUMBER = {4971894},
       DOI = {10.1112/blms.70148},
       URL = {https://doi.org/10.1112/blms.70148},
}

@article {CapMar,
    AUTHOR = {Caprace, Pierre-Emmanuel and Marquis, Timoth\'ee},
     TITLE = {Open subgroups of locally compact {K}ac-{M}oody groups},
   JOURNAL = {Math. Z.},
  FJOURNAL = {Mathematische Zeitschrift},
    VOLUME = {274},
      YEAR = {2013},
    NUMBER = {1-2},
     PAGES = {291--313},
      ISSN = {0025-5874,1432-1823},
   MRCLASS = {20E42 (20F55)},
  MRNUMBER = {3054330},
MRREVIEWER = {Jean\ L\'ecureux},
       DOI = {10.1007/s00209-012-1070-4},
       URL = {https://doi.org/10.1007/s00209-012-1070-4},
}

@article {CapRem,
    AUTHOR = {Caprace, Pierre-Emmanuel and R\'emy, Bertrand},
     TITLE = {Simplicity and superrigidity of twin building lattices},
   JOURNAL = {Invent. Math.},
  FJOURNAL = {Inventiones Mathematicae},
    VOLUME = {176},
      YEAR = {2009},
    NUMBER = {1},
     PAGES = {169--221},
      ISSN = {0020-9910,1432-1297},
   MRCLASS = {20G44 (20E42 20G15 51E24)},
  MRNUMBER = {2485882},
MRREVIEWER = {Guy\ Rousseau},
       DOI = {10.1007/s00222-008-0162-6},
       URL = {https://doi.org/10.1007/s00222-008-0162-6},
}

@article {CrePet,
    AUTHOR = {Creutz, Darren and Peterson, Jesse},
     TITLE = {Stabilizers of ergodic actions of lattices and commensurators},
   JOURNAL = {Trans. Amer. Math. Soc.},
  FJOURNAL = {Transactions of the American Mathematical Society},
    VOLUME = {369},
      YEAR = {2017},
    NUMBER = {6},
     PAGES = {4119--4166},
      ISSN = {0002-9947,1088-6850},
   MRCLASS = {22D40 (22E40 22F10 28D15 37A15)},
  MRNUMBER = {3624404},
MRREVIEWER = {Bruno\ Duchesne},
       DOI = {10.1090/tran/6836},
       URL = {https://doi.org/10.1090/tran/6836},
}

@book {Davis,
    AUTHOR = {Davis, Michael W.},
     TITLE = {The geometry and topology of {C}oxeter groups},
    SERIES = {London Mathematical Society Monographs Series},
    VOLUME = {32},
 PUBLISHER = {Princeton University Press, Princeton, NJ},
      YEAR = {2008},
     PAGES = {xvi+584},
      ISBN = {978-0-691-13138-2; 0-691-13138-4},
   MRCLASS = {20F55 (05B45 05C25 51-02 57M07)},
  MRNUMBER = {2360474},
MRREVIEWER = {Ralf\ Koehl},
}

@book {Bour,
    AUTHOR = {Bourbaki, N.},
     TITLE = {\'El\'ements de math\'ematique. {F}asc. {XXXIV}. {G}roupes et
              alg\`ebres de {L}ie. {C}hapitre {IV}: {G}roupes de {C}oxeter
              et syst\`emes de {T}its. {C}hapitre {V}: {G}roupes engendr\'es
              par des r\'eflexions. {C}hapitre {VI}: syst\`emes de racines},
    SERIES = {Actualit\'es Scientifiques et Industrielles [Current
              Scientific and Industrial Topics]},
    VOLUME = {No. 1337},
 PUBLISHER = {Hermann, Paris},
      YEAR = {1968},
     PAGES = {288 pp. (loose errata)},
   MRCLASS = {22.50 (17.00)},
  MRNUMBER = {240238},
MRREVIEWER = {G.\ B.\ Seligman},
}

@article {CrePet2,
    AUTHOR = {Creutz, Darren and Peterson, Jesse},
     TITLE = {Character rigidity for lattices and commensurators},
   JOURNAL = {Amer. J. Math.},
  FJOURNAL = {American Journal of Mathematics},
    VOLUME = {146},
      YEAR = {2024},
    NUMBER = {3},
     PAGES = {687--711},
      ISSN = {0002-9327,1080-6377},
   MRCLASS = {22E40 (20G25 22D55)},
  MRNUMBER = {4752678},
MRREVIEWER = {B.\ Sury},
       DOI = {10.1353/ajm.2024.a928322},
       URL = {https://doi.org/10.1353/ajm.2024.a928322},
}

@article {CapMon,
    AUTHOR = {Caprace, Pierre-Emmanuel and Monod, Nicolas},
     TITLE = {Decomposing locally compact groups into simple pieces},
   JOURNAL = {Math. Proc. Cambridge Philos. Soc.},
  FJOURNAL = {Mathematical Proceedings of the Cambridge Philosophical
              Society},
    VOLUME = {150},
      YEAR = {2011},
    NUMBER = {1},
     PAGES = {97--128},
      ISSN = {0305-0041,1469-8064},
   MRCLASS = {22D05 (20E32)},
  MRNUMBER = {2739075},
MRREVIEWER = {George\ A.\ Willis},
       DOI = {10.1017/S0305004110000368},
       URL = {https://doi.org/10.1017/S0305004110000368},
}

@book {Zimmer1984,
    AUTHOR = {Zimmer, Robert J.},
     TITLE = {Ergodic theory and semisimple groups},
    SERIES = {Monographs in Mathematics},
    VOLUME = {81},
 PUBLISHER = {Birkh\"auser Verlag, Basel},
      YEAR = {1984},
     PAGES = {x+209},
      ISBN = {3-7643-3184-4},
   MRCLASS = {22E40 (22D40 28D15)},
  MRNUMBER = {776417},
MRREVIEWER = {S.\ G.\ Dani},
       DOI = {10.1007/978-1-4684-9488-4},
       URL = {https://doi.org/10.1007/978-1-4684-9488-4},
}

@article {DymJan,
    AUTHOR = {Dymara, Jan and Januszkiewicz, Tadeusz},
     TITLE = {Cohomology of buildings and their automorphism groups},
   JOURNAL = {Invent. Math.},
  FJOURNAL = {Inventiones Mathematicae},
    VOLUME = {150},
      YEAR = {2002},
    NUMBER = {3},
     PAGES = {579--627},
      ISSN = {0020-9910,1432-1297},
   MRCLASS = {20E42 (20F55 20J06 22E50)},
  MRNUMBER = {1946553},
MRREVIEWER = {Alain\ Valette},
       DOI = {10.1007/s00222-002-0242-y},
       URL = {https://doi.org/10.1007/s00222-002-0242-y},
}

@article {Bischof,
    AUTHOR = {Bischof, Sebastian},
     TITLE = {On growth functions of {C}oxeter groups},
   JOURNAL = {Proc. Edinb. Math. Soc. (2)},
  FJOURNAL = {Proceedings of the Edinburgh Mathematical Society. Series II},
    VOLUME = {68},
      YEAR = {2025},
    NUMBER = {3},
     PAGES = {979--993},
      ISSN = {0013-0915,1464-3839},
   MRCLASS = {20F55 (20F69 51F15)},
  MRNUMBER = {4943977},
MRREVIEWER = {Anthony\ Genevois},
       DOI = {10.1017/S0013091525000094},
       URL = {https://doi.org/10.1017/S0013091525000094},
}

@phdthesis{BischofThesis,
  author = {Bischof, Sebastian},
  title  = {Construction of {RGD}-systems of type $(4,4,4)$ over
            ${\mathbb F_2}$},
  school = {Justus-Liebig-Universit{\"a}t Gie{\ss}en},
  year   = {2023},
  url    = {https://jlupub.ub.uni-giessen.de/items/4e38869c-b9ea-4c74-a9fb-beb2056d2e6f}
}

@book {Tits74,
    AUTHOR = {Tits, Jacques},
     TITLE = {Buildings of spherical type and finite {BN}-pairs},
    SERIES = {Lecture Notes in Mathematics},
    VOLUME = {Vol. 386},
 PUBLISHER = {Springer-Verlag, Berlin-New York},
      YEAR = {1974},
     PAGES = {x+299},
   MRCLASS = {20G15 (50A20)},
  MRNUMBER = {470099},
MRREVIEWER = {Bruce\ Cooperstein},
}

@article {CKKW,
    AUTHOR = {Caprace, Pierre-Emmanuel and Conder, Marston and Kaluba, Marek
              and Witzel, Stefan},
     TITLE = {Hyperbolic generalized triangle groups, property ({T}) and
              finite simple quotients},
   JOURNAL = {J. Lond. Math. Soc. (2)},
  FJOURNAL = {Journal of the London Mathematical Society. Second Series},
    VOLUME = {106},
      YEAR = {2022},
    NUMBER = {4},
     PAGES = {3577--3637},
      ISSN = {0024-6107,1469-7750},
   MRCLASS = {20F67 (20G44 22D55 57K20)},
  MRNUMBER = {4524205},
MRREVIEWER = {Olga\ Varghese},
       DOI = {10.1112/jlms.12668},
       URL = {https://doi.org/10.1112/jlms.12668},
}

@article {Garland,
    AUTHOR = {Garland, Howard},
     TITLE = {{$p$}-adic curvature and the cohomology of discrete subgroups
              of {$p$}-adic groups},
   JOURNAL = {Ann. of Math. (2)},
  FJOURNAL = {Annals of Mathematics. Second Series},
    VOLUME = {97},
      YEAR = {1973},
     PAGES = {375--423},
      ISSN = {0003-486X},
   MRCLASS = {20J05},
  MRNUMBER = {320180},
MRREVIEWER = {M.\ S.\ Raghunathan},
       DOI = {10.2307/1970829},
       URL = {https://doi.org/10.2307/1970829},
}

@article {Kramer,
    AUTHOR = {Kramer, Linus},
     TITLE = {Some remarks on proper actions, proper metric spaces, and
              buildings},
   JOURNAL = {Adv. Geom.},
  FJOURNAL = {Advances in Geometry},
    VOLUME = {22},
      YEAR = {2022},
    NUMBER = {4},
     PAGES = {541--559},
      ISSN = {1615-715X,1615-7168},
   MRCLASS = {54H15 (20E42)},
  MRNUMBER = {4497184},
MRREVIEWER = {Hendrik\ Van Maldeghem},
       DOI = {10.1515/advgeom-2022-0018},
       URL = {https://doi.org/10.1515/advgeom-2022-0018},
}

@article {CapMon_amen,
    AUTHOR = {Caprace, Pierre-Emmanuel and Monod, Nicolas},
     TITLE = {Fixed points and amenability in non-positive curvature},
   JOURNAL = {Math. Ann.},
  FJOURNAL = {Mathematische Annalen},
    VOLUME = {356},
      YEAR = {2013},
    NUMBER = {4},
     PAGES = {1303--1337},
      ISSN = {0025-5831,1432-1807},
   MRCLASS = {43A07 (20F65 47H10)},
  MRNUMBER = {3072802},
MRREVIEWER = {Mehdi\ Sangani Monfared},
       DOI = {10.1007/s00208-012-0879-9},
       URL = {https://doi.org/10.1007/s00208-012-0879-9},
}

@article {CapMon_Bieberbach,
    AUTHOR = {Caprace, Pierre-Emmanuel and Monod, Nicolas},
     TITLE = {An indiscrete {B}ieberbach theorem: from amenable {$\rm
              CAT(0)$} groups to {T}its buildings},
   JOURNAL = {J. \'Ec. polytech. Math.},
  FJOURNAL = {Journal de l'\'Ecole polytechnique. Math\'ematiques},
    VOLUME = {2},
      YEAR = {2015},
     PAGES = {333--383},
      ISSN = {2429-7100,2270-518X},
   MRCLASS = {53C23 (20E42 20F65 43A07 51E24 53C20)},
  MRNUMBER = {3438730},
MRREVIEWER = {Fernando\ Galaz-Garc\'ia},
       DOI = {10.5802/jep.26},
       URL = {https://doi.org/10.5802/jep.26},
}

@article {BCL,
    AUTHOR = {Bader, Uri and Caprace, Pierre-Emmanuel and L\'ecureux, Jean},
     TITLE = {On the linearity of lattices in affine buildings and
              ergodicity of the singular {C}artan flow},
   JOURNAL = {J. Amer. Math. Soc.},
  FJOURNAL = {Journal of the American Mathematical Society},
    VOLUME = {32},
      YEAR = {2019},
    NUMBER = {2},
     PAGES = {491--562},
      ISSN = {0894-0347,1088-6834},
   MRCLASS = {20E42 (20C99 20E08 20F65 22D40 22E40 22F50 51E24)},
  MRNUMBER = {3904159},
MRREVIEWER = {Herbert\ Abels},
       DOI = {10.1090/jams/914},
       URL = {https://doi.org/10.1090/jams/914},
}

@article {CapCio,
    AUTHOR = {Caprace, Pierre-Emmanuel and Ciobotaru, Corina},
     TITLE = {Gelfand pairs and strong transitivity for {E}uclidean
              buildings},
   JOURNAL = {Ergodic Theory Dynam. Systems},
  FJOURNAL = {Ergodic Theory and Dynamical Systems},
    VOLUME = {35},
      YEAR = {2015},
    NUMBER = {4},
     PAGES = {1056--1078},
      ISSN = {0143-3857,1469-4417},
   MRCLASS = {51E24 (22D15)},
  MRNUMBER = {3345164},
MRREVIEWER = {Linus\ K. H. Kramer},
       DOI = {10.1017/etds.2013.102},
       URL = {https://doi.org/10.1017/etds.2013.102},
}

@article {BalBri,
    AUTHOR = {Ballmann, Werner and Brin, Michael},
     TITLE = {Orbihedra of nonpositive curvature},
   JOURNAL = {Inst. Hautes \'Etudes Sci. Publ. Math.},
  FJOURNAL = {Institut des Hautes \'Etudes Scientifiques. Publications
              Math\'ematiques},
    NUMBER = {82},
      YEAR = {1995},
     PAGES = {169--209},
      ISSN = {0073-8301,1618-1913},
   MRCLASS = {53C23 (53C20 57M20)},
  MRNUMBER = {1383216},
MRREVIEWER = {Ralf\ J.\ Spatzier},
       URL = {http://www.numdam.org/item?id=PMIHES_1995__82__169_0},
}

@article {RemyCRAS,
    AUTHOR = {R{\'e}my, Bertrand},
     TITLE = {Construction de r\'eseaux en th\'eorie de {K}ac-{M}oody},
   JOURNAL = {C. R. Acad. Sci. Paris S\'er. I Math.},
  FJOURNAL = {Comptes Rendus de l'Acad\'emie des Sciences. S\'erie I.
              Math\'ematique},
    VOLUME = {329},
      YEAR = {1999},
    NUMBER = {6},
     PAGES = {475--478},
      ISSN = {0764-4442},
   MRCLASS = {20E42},
  MRNUMBER = {1715140},
MRREVIEWER = {Paolo\ Casati},
       DOI = {10.1016/S0764-4442(00)80044-0},
       URL = {https://doi.org/10.1016/S0764-4442(00)80044-0},
}

@article {RemyIntegrability,
    AUTHOR = {R{\'e}my, Bertrand},
     TITLE = {Integrability of induction cocycles for {K}ac--{M}oody groups},
   JOURNAL = {Math. Ann.},
    VOLUME = {333},
      YEAR = {2005},
    NUMBER = {1},
     PAGES = {29--43},
}

@article {CLS,
    AUTHOR = {Chapovalov, Maxim and Leites, Dimitry and Stekolshchik,
              Rafael},
     TITLE = {The {P}oincar\'e{} series of the hyperbolic {C}oxeter groups
              with finite volume of fundamental domains},
   JOURNAL = {J. Nonlinear Math. Phys.},
  FJOURNAL = {Journal of Nonlinear Mathematical Physics},
    VOLUME = {17},
      YEAR = {2010},
     PAGES = {169--215},
      ISSN = {1402-9251,1776-0852},
   MRCLASS = {20F55},
  MRNUMBER = {2827480},
MRREVIEWER = {Adam\ Piggott},
       DOI = {10.1142/S1402925110000842},
       URL = {https://doi.org/10.1142/S1402925110000842},
}

@article {EJ,
    AUTHOR = {Ershov, Mikhail and Jaikin-Zapirain, Andrei},
     TITLE = {Property ({T}) for noncommutative universal lattices},
   JOURNAL = {Invent. Math.},
  FJOURNAL = {Inventiones Mathematicae},
    VOLUME = {179},
      YEAR = {2010},
    NUMBER = {2},
     PAGES = {303--347},
      ISSN = {0020-9910,1432-1297},
   MRCLASS = {22D10 (20E08 20E18 20F69)},
  MRNUMBER = {2570119},
MRREVIEWER = {Markus\ Neuhauser},
       DOI = {10.1007/s00222-009-0218-2},
       URL = {https://doi.org/10.1007/s00222-009-0218-2},
}

@article {Mar,
    AUTHOR = {Marquis, Timoth\'ee},
     TITLE = {Abstract simplicity of locally compact {K}ac-{M}oody groups},
   JOURNAL = {Compos. Math.},
  FJOURNAL = {Compositio Mathematica},
    VOLUME = {150},
      YEAR = {2014},
    NUMBER = {4},
     PAGES = {713--728},
      ISSN = {0010-437X,1570-5846},
   MRCLASS = {20G44 (20E42)},
  MRNUMBER = {3200675},
MRREVIEWER = {Walter\ D.\ Freyn},
       DOI = {10.1112/S0010437X13007598},
       URL = {https://doi.org/10.1112/S0010437X13007598},
}

@book {Marquis_book,
    AUTHOR = {Marquis, Timoth\'ee},
     TITLE = {An introduction to {K}ac-{M}oody groups over fields},
    SERIES = {EMS Textbooks in Mathematics},
 PUBLISHER = {European Mathematical Society (EMS), Z\"urich},
      YEAR = {2018},
     PAGES = {xi+331},
      ISBN = {978-3-03719-187-3},
   MRCLASS = {22-02 (17B99 20E42 20F55 20G44)},
  MRNUMBER = {3838421},
MRREVIEWER = {Ralf\ Koehl},
       DOI = {10.4171/187},
       URL = {https://doi.org/10.4171/187},
}

@article {CapRem_nondist,
    AUTHOR = {Caprace, Pierre-Emmanuel and R\'emy, Bertrand},
     TITLE = {Non-distortion of twin building lattices},
   JOURNAL = {Geom. Dedicata},
  FJOURNAL = {Geometriae Dedicata},
    VOLUME = {147},
      YEAR = {2010},
     PAGES = {397--408},
      ISSN = {0046-5755,1572-9168},
   MRCLASS = {20E42 (20F55 20F65 20G44 51E24)},
  MRNUMBER = {2660586},
MRREVIEWER = {Ralf\ Koehl},
       DOI = {10.1007/s10711-010-9469-8},
       URL = {https://doi.org/10.1007/s10711-010-9469-8},
}

\end{document}